\setlist[itemize,1]{leftmargin=\dimexpr 26pt-.065in}
\theoremstyle{definition}
\newtheorem{definition}{Definition}[section]
\newtheorem{theorem}{Theorem}[section]
\newtheorem{proposition}{Proposition}[section]
\newtheorem{remark}{Remark}[section]
\newtheorem{corollary}{Corollary}[section]
\newtheorem{lemma}{Lemma}[section]
\renewenvironment{proof}{{\noindent \bf  Proof.}}{\qed}
\begin{document}
\title[]{On the probabilistic approach to the solution of generalized fractional differential equations of Caputo and Riemann-Liouville type}

\author{M. E. Hern\'andez-Hern\'andez, V. N. Kolokoltsov}

\address{M. E. Hern\'andez-Hern\'andez \newline
Department of Statistics, University of Warwick, Coventry, United Kingdom.}
\email{M.E.Hernandez-Hernandez@warwick.ac.uk}

\address{V. N. Kolokoltsov \newline
Department of Statistics, University of Warwick, Coventry, United Kingdom,
\newline
And Associate member  of Institute of Informatics Problems, FRC CSC RAS.}\email{V.Kolokoltsov@warwick.ac.uk}

\subjclass[2010]{34A08, 60H30, 35S15,  34A05}
\keywords{Caputo derivative, Riemann-Liouville derivative, Generalized fractional operators,  Mittag-Leffler functions, Feller process, $\beta$-stable subordinator, Stopping time.}

\maketitle

\begin{abstract}
This  paper provides a probabilistic approach to solve linear equations  involving  Caputo  and Riemann-Liouville type  derivatives.  Using the  probabilistic interpretation of these operators as the generators of  interrupted  Feller processes, we obtain  well-posedness results and explicit solutions (in terms of the transition densities of the underlying stochastic processes). The problems studied here include   fractional linear differential equations, well analyzed in the literature, as well as their far reaching extensions.
\end{abstract}

\section{Introduction}
The theory of fractional differential equations  is a valuable tool for modeling  a variety of physical phenomena  arising in different fields of science.   Their numerous applications include areas such as engineering, physics, biophysics,  continuum and statistical mechanics, finance, control processing, econophysics, probability and statistics, and so on (see e.g. \cite{carpinteri1997}, \cite{kilbas2}, \cite{podlubny}, \cite{zaslavsky}, and  references therein).  Fractional ordinary differential equations (FODE's) and 
 fractional partial differential equations (FPDE's) have been  used as more accurate models to describe, e.g.,   relaxation phenomena, viscoelastic systems,  anomalous diffusions, L\'evy flights and continuous time random walks (CTRW's)   (see, e.g.,\cite{Bouchaud1990}, \cite{KV}-\cite{KV0}, \cite{FMainardi1997},  \cite{FMainardi2010},   \cite{Meerschaert2012},  \cite{zaslavsky}). 
 
To solve this type of equations different analytical and numerical methods  have been investigated. Analytical methodologies include \textit{the   Laplace,  the Mellin} and  \textit{the Fourier  transform} techniques \cite{kai}, \cite{kilbas2},  \cite{podlubny1994}-\cite{samko},   and the \textit{operational calculus} method \cite{luchko1996}-\cite{hilfer2009}, \cite{gorenflo}.   Regarding  the  numerical approaches, we can mention the \textit{fractional difference method}, the \textit{quadrature formula approach}, the \textit{predictor-corrector approach} as well as some numerical approximations using the \textit{short memory principle} amongst others (see, e.g.,     \cite{kai1997}-\cite{kai}, \cite{edwards}, \cite{podlubny} and references therein).
In the present article we focus on a probabilistic approach to solve  linear differential equations   involving natural extensions (from a probabilistic point of view) of the Caputo and Riemann-Liouville (RL) derivatives of order $\beta \in (0,1)$.

The use of probability theory to solve classical differential equations  is an effective approach   to obtain their solutions  by relating them with boundary value problems of diffusion processes.   In the fractional framework, connections between probability and FPDE's have been also analyzed in the literature \cite{gorenflo98}, \cite{KV}-\cite{KV0}, \cite{Meerschaert2012}, \cite{non1990}, \cite{scalas}. For instance, the probabilistic interpretations of the Green (or fundamental) solution to  the \textit{time-fractional diffusion equation} and \textit{the time-space fractional diffusion equation} are already known   (see references above). 

In this paper we employ similar probabilistic arguments  (transforming the original problem into a Dirichlet type problem)  to study linear equations  involving a general class of Caputo and Riemann-Liouville  type operators. These operators   can be thought of as  the  extensions of the classical  Caputo  and RL fractional derivatives, respectively.   As was shown in \cite{KVFDE}, they can be obtained as the generators of  Markov  processes interrupted on an attempt to cross a boundary point.  The  Caputo  and RL derivatives of order $\beta \in (0,1)$ are particular cases arising by stopping  and killing a $\beta$-stable subordinator, respectively. This fact allows one to solve fractional equations as particular cases of more general equations involving $D-$ and $D_{*}-$operators  of the type $-D_{a+}^{(\nu)}$ and $-D_{a+*}^{(\nu)}$, respectively (see definitions later).    
The problems   we are addressing here are the following:
 \begin{itemize}
 \item [(i)] the   linear equation with the Caputo type operator:
 \begin{equation}\label{case1}
 -D_{a+*}^{(\nu)} u(t) = \lambda u(t)  - g(t),\quad t \in  (a,b],\, \quad u(a)= u_a,
 \end{equation}
  for a given $\lambda \ge 0$, $g$ a bounded function  and $u_a \in \mathbb{R}$.   Since there is a relationship between Caputo and RL type operators (similar to that between the classical Caputo and RL derivatives), we also studied the problem with the RL type operator:
 \begin{equation}\label{case1RL}
 -D_{a+}^{(\nu)} u(t) = \lambda u(t)  - g(t),\quad t \in  (a,b],\, \quad u(a)= 0,
 \end{equation}
 \item  [(ii)] the generalized  \textit{mixed fractional linear equation} 
 \begin{equation}\label{case3}
  \, - \sum _{i=1}^d \, \tilde{D}^{(\nu_i)} \, u(t_1,\ldots, t_d)   =  \lambda u(t_1,\ldots, t_d) - g(t_1,\ldots, t_d),
  \end{equation}
 with some prescribed boundary condition, where $-\tilde{D}^{(\nu_i)}$ denotes either the RL type operator $-\,_{t_i}\!D_{a_i+}^{(\nu_i)}$ or the Caputo type  operator  $-\,_{t_i}\!D_{a_i+*}^{(\nu_i)}$. The  left subscript $t_i$ indicates the operator is acting on the variable $t_i$.
 \end{itemize}

Fractional linear differential equations with Caputo derivatives of order $\beta \in (0,1)$ are particular cases of equation (\ref{case1}). They  have been extensively investigated  by means of the Laplace transform method. Hence, it is  known that \cite{kai}, \cite{podlubny}-\cite{samko}
\begin{equation}\label{linearcase}
  D_{a+*}^{\beta} u(t) =  \lambda u(t)+ g(t), \quad u (a) = u_a, \quad \lambda \in \mathbb{R},
\end{equation}
for $\beta \in (0,1)$ and  a given continuous function $g$ on $[a,b]$,  has the unique  solution 
\begin{equation}\label{General}
u(t) = u_a E_{\beta} \left[\, -\, \lambda (t- a)^{\beta} \right] + \int_a^t  g(r)(t-r)^{\beta-1} E_{\beta,\beta} \left ( \, - \,  \lambda (t-r)^{\beta}\right) dr, \quad t \in (a,b],
\end{equation} 
where $E_{\beta}$ and $E_{\beta,\beta}$ denote the \textit{Mittag-Leffler functions} (see definitions later). 
 This solution can be written in terms of $\beta-$stable densities by means of the integral representation of the Mittag-Leffler functions  given in (\ref{identity}). The probabilistic approach introduced here will give this expression directly once one writes down the expectations involved in the general stochastic representation (\ref{solutionC1B}). On the other hand, using the results  obtained here and the uniqueness of solutions, we obtain a pure probabilistic proof of the  well-known equality  in (\ref{identity}). 

Apart from the classical Caputo derivatives, operators $-D_{a+*}^{(\nu)}$ include, as simple particular cases, the multi-term fractional derivatives $\sum_{i=1}^d \omega_i(t) D_{a_i +*}^{\beta_i} u(t)$ with non-negative functions $\omega_i$. Hence, as another example of (\ref{case1}), our approach also applies to  the \textit{multi-term  fractional equation}   
\begin{equation}\label{multiterm}
\sum_{i=0}^k \omega_i(t)  D^{\beta_i}_{0+*} u(t) =-\lambda u(t) + g(t), \quad  \,\, \beta_i \in (0,1),\,\, t \in \mathbb{R}_+,  
\end{equation}
with some given functions $g$ and $\omega_i,$ $i=1, \ldots, k$.
 The explicit solution for the case when  $\omega_i$   are constants and (\ref{multiterm}) is  a \textit{commensurate equation} (i.e., the quotients $\beta_i/\beta_j$   are rational numbers for all $i,j$),  has  been analyzed  by the reduction of the original problem to either a single- or multi-order fractional differential equation system (see, e.g., \cite{kai}, \cite{edwards}, and references therein).  An approximation for  its solution has  been also studied, e.g., in  \cite{kai}.  
Our approach  encompasses not only the  commensurate case with constant coefficients $\omega_i$  but also  the more general case with non constant coefficients $\omega_i(\cdot)$ and, even more generally,  functions $\beta_i(t)$. 

The fractional counterpart of equation (\ref{case3}) is the \textit{mixed} fractional equation 
 \begin{equation}\label{case3alphabeta}
  -  \, _{t_1}\!D_{0+}^{\beta}   \, u(t_1,t_2) -  \, _{t_2}\!D_{0+*}^{\alpha}   \, u(t_1,t_2) =  \lambda u(t_1, t_2) - g(t_1, t_2), \quad \beta, \alpha \in (0,1),
 \end{equation} 
 subject  to  some boundary condition, where $\lambda \ge 0$ and   $g$ is a given function on $[0,b_1] \times [0,b_2]$. The probabilistic approach presented here provides the explicit solution in terms of $\beta-$ and $\alpha-$stable densities. To our knowledge this  type of mixed fractional equations with Caputo and RL derivatives of order in $(0,1)$ has not been explored explicitly in the literature. The solution to more general cases can be obtained by following an analogous  procedure,  for instance, to solve  the equation
 \begin{equation}\label{case3-Rd}
  -\sum_{i=1}^d  \, \tilde{D}^{\beta_i}   \, u(t_1,\ldots,t_2) =  \lambda u(t_1, \ldots, t_d) - g(t_1, \ldots, t_d),
 \end{equation} 
with $\tilde{D}^{\beta_i}$ being either the RL or the Caputo derivative.
   
The probabilistic ideas behind the solution to (\ref{case1}) and (\ref{case3}) can be used to solve the linear equation with non-constant coefficients, as well as  more general FPDE's involving operators of the type  
$-\!_tD_{a+*}^{(\nu)} - A_{\textbf{x}}$, where  $-\,A_{\textbf{x}}$ denotes the generator of a  Feller process on $\mathbb{R}^d$ acting on the variable $\textbf{x} \in \mathbb{R}^d$.
 Some specific cases include the widely studied  time-fractional diffusion equation (see, e.g.,  \cite{Bouchaud1990}, \cite{kochubei}, \cite{meshart} and references above)
 \begin{equation} \nonumber \frac{\partial^{\beta} u(x,t)}{\partial t^{\beta}} = C \frac{\partial^2 u(x,t)}{\partial x^2}, \quad \,\,\, 0< \beta \le 1, \quad C>0,
\end{equation}
where  $(x,t)$  denotes the space-time variables.
 These cases will be addressed in a forthcoming paper in preparation.
 
It is worth mentioning  that   equations involving $D_{*}-$ and  $D-$operators  do not usually have solutions in the domain of the generators  $-D_{a+*}^{(\nu)}$ and $-D_{a+}^{(\nu)}$, respectively. The existence of such  solutions is restricted to a specific value in the boundary condition. Thus, as usual in classical stochastic analysis, by  introducing the concept of a \textit{generalized solution}  we are able to   study the well-posedness (in a generalized sense) for these equations. To illustrate this, consider   the very-well known   ordinary differential equation (ODE)
\begin{align*}
  u' (t)  = - \lambda u(t) + g (t), \quad t \in  (0,b],  \quad 
  u(0) = u_0, 
  \end{align*}
where $b>0$, $g \in C[0,b]$ and $\lambda  > 0$, and whose solution is 
\begin{equation}\label{solLinearD}
  u (t)  =  u_0 e^{\lambda t} + \int_0^t exp\{\lambda (t-r)\} g(r)dr, \quad t \in (0,b].\end{equation}
Probabilistically, this problem can be thought of as the boundary value problem associated with the  deterministic linear motion on $(-\infty,b]$ which is stopped at reaching the boundary point $t= 0$. In this case, the semigroup $\{S_s\}_{s \ge 0}$  of the deterministic  process is given by $S_s f(t) = f(t - s)$ for any $t  \in (-\infty,b]$ and $f \in C(-\infty,b]$, whilst   the semigroup  $\{S_s^{0+*}\}_{s\ge 0}$ for the stopped process is $S_s^{0+*} f(t) = f(\max\{0,t - s\})$  for any $t \in [0,b]$ and $f \in C[0,b]$. Hence, the resolvent operator of  the semigroup $S_s^{0+*}$ provides   equation (\ref{solLinearD}) as the unique   \textit{solution in the domain of the generator} (the space $C^1[0,b]$) if, and only if,  $u_0 = \frac{1}{\lambda }g(0)$.     Otherwise  this solution is (in our terminology) only a \textit{generalized solution} as it can be obtained as a limit of solutions in the domain of the generator. Moreover, in this case the  generalized solution is also  a  \textit{classical (smooth) solution} lying  on $C[0,b] \cap C^1(0,b]$ instead of   $C^1[0,b]$.  Similar situations  occur when considering   fractional differential equations. We will see that the solutions found in the literature are usually solutions in the generalized sense, as they usually do not belong to the domain of $-D_{a+*}^{\beta}$ or $-D_{a+}^{\beta}$ as generators of Feller processes. 

The main contribution of this work relies on providing well-posedness results and explicit integral representations of solutions to linear equations with Caputo and RL type operators. We pay attention to the existence  of two  types of solutions: \textit{solutions in the domain of the generator} 
and  \textit{generalized solutions}. The latter concept  defined for rather general, even not continuous, functions $g$ in (\ref{case1}).  Moreover, all solutions  are given  in terms of expectations of functionals of Markov processes.     From the point of view of numerical analysis, this representation can be exploited  to obtain numerical solutions to a variety of problems by performing Monte Carlo techniques.  Simulation methods have been effectively used for classical differential equations and, in recent years,   different methods for evaluating path functionals of L\'evy processes have been actively researched (see, e.g., \cite{Castilla2012}-\cite{Castilla2014}, \cite{Kuznetsov2011}). 
Finally, by using the monotonicity of the underlying processes,  we obtain   explicit solutions   in  terms of the transition densities of the Markov processes  involved. 

The paper is organized as follows. The next section provides  some standard notation and gives a quick summary about Caputo and RL  derivatives as well as $\beta-$stable subordinators.   Section 3  introduces the definition of the generalized  RL and  Caputo type operators,  $-D_{a+}^{(\nu)}$ and $ -D_{a+*}^{(\nu)}$, respectively. In Section 4 we study important properties of the underlying stochastic processes associated with the generalized fractional operators.    Then, the probabilistic solution to the equation   (\ref{case1})  is addressed in Section 5. This section concludes with the application of these results  to deduce the known solutions to  fractional linear equations with the classical Caputo derivatives  $D_{a+*}^{\beta}$, for $\beta \in (0,1)$.  The last section presents the well-posedness  to the mixed  equations (\ref{case3}) and (\ref{case3alphabeta}).
 
\section{Preliminaries}
\subsection{Notation}
 Let $\mathbb{N}$, $\mathbb{C}$,   and $\mathbb{R}^d$  be the set of positive integers, the complex space    and  the $d$-dimensional Euclidean space, respectively.  For any interval $A$, the standard notation  $B(A)$, $C(A)$,  $C_b(A)$ and $C^1(A)$ will denote the set of bounded Borel measurable functions, continuous functions,  bounded continuous functions and continuously differentiable functions defined on $A$, respectively.   If $A$ is a  closed set, $C^1(A)$ means the space of continuous functions differentiable up to the boundary. Notation $||\cdot|| $ stands for the sup-norm $||f|| = \sup_{x \in A} |f(x)|$ for $f \in B(A)$.  If  $A = [a,b]$, notation $C_a[a,b]$ and $C_a^1[a,b]$  denote  the space of continuos functions vanishing at $a$ and the space $C_a[a,b] \cap C^1[a,b]$, respectively.
 
As usual, $E_{\beta}$ will refer to the \textit{Mittag-Leffler function} of order $\beta > 0$ defined by
\begin{equation}\nonumber E_{\beta} (z) := \sum_{j=0}^{\infty} \frac{z^j}{\Gamma (j\beta+ 1)}, \quad z \in \mathbb{C},\, 
\end{equation}
whilst  $E_{\beta_1, \beta_2}$ means the \textit{two-parameter Mittag-Leffler function}:
\begin{equation}\nonumber E_{\beta_1, \beta_2} (z):= \sum_{j=0}^{\infty} \frac{z^j}{ \Gamma ( j\beta_1 + \beta_2)}, \quad z \in \mathbb{C},\, \,\, \beta_1, \beta_2 > 0.
\end{equation}
For properties of these functions see, e.g.,  \cite{kai}, \cite{podlubny}-\cite{samko}.

Letters $t$ and $r$ are mainly used as space variables, and the letter $s$ is used as a time variable. Bold letters, e.g. $\textbf{t}$ and $\textbf{a}$, shall  denote elements in $\mathbb{R}^d$ for $d\ge 2$, and  bold capital letters will stand for $\mathbb{R}^d-$valued stochastic processes, e.g. $\mathbf{T}=\{ \mathbf{T}(s)\,:\, s \ge 0\}$. Letters $\mathbf{P}$ and $\mathbf{E}$  are reserved   for the probability and the mathematical expectation, respectively.   Notation $W_{\beta}(\sigma, \gamma)$ means a $\beta-$stable random variable (r.v.) with scaling parameter $\sigma$, skewness parameter $\gamma$ and location parameter zero. Its  density function  shall be denoted by   $w_{\beta}(\cdot; \sigma, \gamma)$. Finally, for a given Feller semigroup $\{S_t\}_{t \ge 0}$ on $C_b(S)$, its \textit{resolvent operator} at $\lambda > 0$ is denoted by $R_{\lambda}$ and  is defined as the Bochner integral (see e.g. \cite{EK})  
\begin{equation}\label{resolvent}
R_{\lambda} g\,:= \int_0^{\infty} e^{-\lambda s} S_s g\,ds, \quad g \in C_b(S).
\end{equation}
By taking $\lambda = 0$ in (\ref{resolvent}), one  obtains  the \textit{potential operator} which is denoted by $R_0g$ (whenever it exists). Additional superscripts shall be used to identify different resolvents and potential operators.
 
 \subsection{Fractional differential operators}

This section provides a quick summary of basic results concerning the classical Riemann-Liouville and Caputo fractional operators. For a detailed treatment refer, e.g.,  to   \cite{podlubny}-\cite{samko}  and references therein.

 Classical \textit{fractional differential operators} are defined in terms of both the standard differential operator (hereafter denoted by $D^m$, $m \in \mathbb{N}$),  and the  \textit{Riemann-Liouville integral fractional operator} $I^{\alpha}_{a+}$ for any $\alpha > 0$, $a \in \mathbb{R}\cup \{-\infty\}$,    defined by
\begin{equation}\nonumber 
  I_{a+}^{\alpha} h(t)  := \frac{1}{ \Gamma (\alpha)} \int_a^t (t-s)^{\alpha -1} h(s)ds, \quad \alpha >0,\,t > a.
  \end{equation}
For convention, $I_{0+}^{0}$ refers  to the identity operator.  

 The \textit{left-sided    Riemann-Liouville   ($RL$)  operator   of order  $\beta>0$} (shortly the RL  derivative)  is defined   as the left-inverse of the corresponding integral operator  (\cite{kai}, \cite{samko}). To be precise,   
 if   $\beta \in \mathbb{R}^+$  and $m = \lceil \beta \rceil$ ($\, \lceil \cdot \rceil$ denoting the ceiling function), then the $RL$  derivative $D_{a+}^{\beta}$ is defined by
\begin{equation}\label{RLD}
D_{a+}^{\beta} h (t) : = D^{m} I_{a+}^{m - \beta} h (t), \quad  \beta > 0, \,\, \beta \notin \mathbb{N}, \,\, t>a. 
\end{equation}
A sufficient condition for $D_{a+}^{\beta}$ to be well-defined  is to assume that  $f \in A^m[a, \infty)$, i.e., its derivatives of order $m-1$ are absolutely continuous (see, e.g., \cite{kai}). 

An alternative fractional differential operator  is the  \textit{left-sided Caputo   operator} (shortly the Caputo derivative): 
\begin{equation}\label{caputo}
D_{a+*}^{\beta} h (t)   := I^{m- \beta}_{a+} D^m h(t), \quad \beta > 0, \,\, \beta \notin \mathbb{N}, \quad t > a,
\end{equation}
for which $h$ requires the absolute integrability of its derivatives of order $m= \lceil \beta \rceil$. 

It can be proved (see e.g. \cite{kai}) that both operators are related by the equality 
\begin{equation}\label{otherCaputo}
D_{a+*}^{\beta} h (t)   = D_{a+}^{\beta}  [ h - T_{m-1} [h;a] ],
\end{equation}
where $T_{m-1}[h;a]$ denotes the Taylor expansion of  order $m-1$, centered at $a$, for the function $h$. Hence,  in general
\[   D^{\beta}_{a+}h (t) := D^m I^{m- \beta}_{a+} h(t)  \,\,\, \neq \,\,\, I^{m-\beta} D^m h(t)=: D_{a+*}^{\beta}h(t), \] 
unless the function $h(t)$ along with its first $m-1$ derivatives vanish at $a+$ (or  as $t \to -\infty$ for the case $a = - \infty$).

\begin{remark}
The left-sided derivatives have a direct counterpart to the right-sided versions (see previous references for details). 
However, we work everywhere in this paper only with the left-sided operators and their generalizations. The right-sided version of these results is a straightforward modification.
\end{remark}

\subsection{Special case: $\beta \in (0,1)$}

To solve fractional differential equations, we will be  mostly  interested  in fractional derivatives of order $\beta \in (0,1)$. In this case  equations (\ref{RLD}) and (\ref{caputo}) become  
\begin{align*}
  D_{a+}^{\beta}h (t) &= \frac{1}{\Gamma ( \beta)}   \frac{d}{dt}     \left (\int_a^t (t- r)^{-\beta } h(r)dr \right ),  \quad t > a,
  \end{align*}
and
\begin{align*}
  D_{a+*}^{\beta}h (t) &= \frac{1}{\Gamma ( \beta)}  \int_a^t (t- r)^{-\beta }       h'(r)dr ,  \quad t > a,
  \end{align*}
respectively. 
Further,  for smooth enough functions $h$ (e.g.   $h$ in the Schwartz space), one obtains
\begin{align}\label{RL}
  D_{a+}^{\beta}h (t) &= \frac{1}{\Gamma (- \beta)} \int_0^{t-a} \frac{h(t-r) - h(t) }{r^{1+\beta}} dr + \frac{h(t)}{ \Gamma (1-\beta) (t-a)^{\beta}}, \quad t> a, \,\,\beta \in (0,1),
  \end{align}
and
\begin{align}\label{Caputo}
  D_{a+*}^{\beta}h (t) &= \frac{1}{\Gamma (- \beta)} \int_0^{t-a} \frac{h(t-r) - h(t) }{r^{1+\beta}} dr + \frac{h(t) - h(a)}{ \Gamma (1-\beta) (t-a)^{\beta}}, \quad t> a, \,\, \beta \in (0,1),
  \end{align}
    for details see, e.g., Appendix in \cite{KVFDE}. 
 
Thus, for these values of $\beta$  the relationship between the Caputo and the RL derivates in  (\ref{otherCaputo})    translates to
\begin{align*}
 D_{a+*}^{\beta}h(t)  &=  D_{a+}^{\beta} [\, h - h(a) ] (t)  
 =  D_{a+}^{\beta} h(t) -  \frac{h(a)}{ \Gamma (1 - \beta) (t-a)^{\beta}},\quad t > a.
\end{align*} 
For smooth bounded integrable functions or  functions that vanish at $t = a$ (or as $ t \to -\infty$ for the case $a = - \infty$), the previous equality implies that the Caputo  derivative and the  RL  derivative coincide. Its common value for $a = -\infty$, from now on denoted by $d^{\beta}/dt^{\beta}$, is sometimes called    \textit{the generator form of the fractional  derivative of order $\beta \in (0,1)$}, \cite{Meerschaert2012}: 
\begin{equation}\label{gen}
\frac{d^{\beta}}{dt^{\beta}} h (t) := D_{-\infty +}^{\beta}h (t)  = D_{-\infty +*}^{\beta}h (t)  =   \frac{1}{\Gamma (-\beta)} \int_0^{\infty} \frac{h(t-r) -h(t)}{r^{1 + \beta}}dr. 
\end{equation}
\subsection{Stable subordinators}
Hereafter, we will always assume the existence of a probability space $(\Omega, \mathcal{G}, \mathbb{P})$ such that  all the stochastic processes of our interest are defined on it.   Notation $\mathcal{F}_s^{X}$ means the completed natural filtration generated by a process $X=\{X(s)\}_{s\ge 0}$, i.e. $\mathcal{F}_s^{X}:= \sigma (X_r\,:\, 0\le r \le s)$.

A $\beta$-stable subordinator for $\beta \in (0,1)$, is a  real-valued stable L\'evy process  $T^{\beta} = \{T^{\beta}(s)\}_{s \ge 0}$ started at $0$ almost surely (a.s.) with  independent increments, $T^{\beta}(s) - T^{\beta}(r)$ for any $ 0\le r < s$,     having the same distribution as the r.v. $ W_{\beta} ((s-r)^{1/\beta},1)$,  a totally skewed  positive $\beta$-stable r.v. with scale parameter $\sigma = (s-r)^{1/\beta}$,  see e.g. \cite{a}, \cite{taqqu}.  

This process has nondecreasing sample paths a.s., and it has only positive jumps. Moreover,  this is time-homogeneous with respect to its natural filtration. Further, since $\beta$-stable processes are self-similar  with index $1/\beta$,   the process $\{c^{1/\beta} T^{\beta}(s)\}_{s \ge 0}$ has the same distribution as  the process $\{T^{\beta}(cs)\}_{s \ge 0}$ for any positive constant $c$. 
 Consequently,   the transition probabilities $p_s^{\beta}(t,E):= \mathbf{P} [T^{\beta}(s) \in E | T^{\beta}(0) = t]$ for any $E\in \mathcal{B}(\mathbb{R})$ (the Borel sets of $\mathbb{R}$) satisfy
\begin{equation} \nonumber 
p_s^{\beta}(t,E) = s^{-1/\beta} \int_E  w_{\beta} (s^{-1/\beta} (r-t); 1,1)dr,
\end{equation}
where $w_{\beta}(\cdot;1,1)$  is the   density of a standard  $\beta-$stable r.v. $W_{\beta}(1,1)$. The density in this case  equals   
\begin{align*}
w_{\beta}(x;1,1)  &=\frac{1}{\pi} \Re \int_0^{\infty} \exp \left \{ -iux -u^{\beta}\exp \left( -i \frac{\pi}{2} \beta \right )\right \} du,
\end{align*}
where $\Re (z)$ means the real part of $z \in \mathbb{C}$ (see Theorem 2.2.1 in \cite{zolotarev}).

The  infinitesimal generator of a $\beta$-stable subordinator,   denoted by $A^{\beta}$,  is the generator of a jump-type Markov process of the form
\begin{equation}\label{generatorL}
  A^{\beta} h (t)  = \int_0^{\infty} (h (t+ r) - h(t)) \nu_{\beta} (dr), \quad h \in \mathfrak{D}_{\beta},  
  \end{equation}
with a domain $\mathfrak{D}_{\beta} $ and with the jump  intensity given by the L\'evy measure $\nu$ supported in $\mathbb{R}_+$: 
 \begin{equation}\label{levym}
\nu_{\beta}(dr)= \frac{\beta}{ \Gamma (1- \beta)r^{1+ \beta}}dr =  \,-\,\frac{1}{ \Gamma (- \beta) r^{1+ \beta}}dr. 
\end{equation} 
The last equality holds due to the identity $\Gamma (x) =  (x-1) \Gamma (x-1)$. 

For $\beta \in (0,1)$, we will say that the process $T^{+\beta} = \left\{ T^{+\beta} (s)\,:\, s \ge 0 \right \}$  is an \textit{inverted $\beta-$stable subordinator} if $-T^{+\beta}$ is a $\beta-$stable subordinator.   Thus, $T^{+\beta}$ is a  Markov process with non increasing sample paths a.s.  and with the generator
\begin{equation} \nonumber
A^{+\beta} h(t) = \int_0^{\infty} \left( h(t-r) -h(t) \right ) \nu_{\beta}(dr).
\end{equation}
Notice that  the relation
\begin{equation}\nonumber
w_{\beta} (-t; \sigma, 1) = w_{\beta} (t; \sigma, -1),
\end{equation}
implies that $T^{+\beta} (s) - T^{+\beta}(r)$ has the same distribution as  the r.v. $ W_{\beta} \left( (s-r)^{1/\beta}, -1,0\right)$. Hence,  the transition probabilities $p_s^{+\beta}(t,E):= \mathbf{P} [T^{+\beta}(s) \in E | T^{+\beta}(0) = t]$ are given by \begin{equation}\label{transitionpXInv}
p_s^{+\beta}(t,E) = s^{-1/\beta} \int_E  w_{\beta} (s^{-1/\beta} (t-r); 1,1)dr,\quad E \in \mathcal{B}(\mathbb{R}).
\end{equation}
\section{Generalized  fractional operators}\label{definitionD}

Let $T^{\beta}=\{T^{\beta}(s):s \ge 0\}$ be a  $\beta-$stable subordinator  with the generator $A^{\beta}$ and let  $T^{+\beta} =\{T^{+\beta}(s):s \ge 0\}$ be the corresponding inverted $\beta-$stable subordinator  with the generator   $A^{+\beta}$.  Then  the operator $-d^{\beta}/dt^{\beta}$ in (\ref{gen}) coincides with $A^{+\beta}$, i.e.  
\begin{align*}
 - \frac{d^{\beta}}{dt^{\beta}}h(t)  = D_{-\infty +}^{\beta} h (t)  = D_{-\infty +*}^{\beta} h (t)  =  A^{+\beta} h (t). 
\end{align*}
As was shown in \cite{KVFDE}, an analogous probabilistic interpretation of  the Caputo operators $D_{a+*}^{\beta}$ (resp.  $D_{a+}^{\beta}$),  for any  $a \in \mathbb{R}$ and $\beta \in (0,1)$,   can be obtained by   interrupting (resp. killing )  $T^{+\beta}$   on an attempt to cross the boundary point $t=a$. Moreover, this interruption procedure  naturally yields  an extension  of the Caputo and the RL fractional derivatives.

  Namely, let $-D_+^{(\nu)}$ be the  generator of a decreasing Feller process taking values on $(-\infty,b]$, $b\in \mathbb{R}$,  given by
  \begin{equation}\label{Adecreasing}
  -D_+^{(\nu)}h(t) = \int_0^{\infty} \left (h(t-r) - h(r)  \right ) \nu(t,r)dr, \quad t \le b,
  \end{equation}
  with a  function $\nu(t, r)$ satisfying the condition:  
  \begin{itemize}
  \item [(H0)] the function $\nu(t, r)$ is  continuous as a function of two variables and continuously  differentiable in the first variable.    Furthermore,
  \begin{equation}\nonumber 
    \sup_t \int r \nu (t,r)dr < \infty, \quad \sup_t \int r \Big | \frac{\partial}{\partial t} \nu(t,r)\Big |dr < \infty,
  \end{equation}
  and 
  \begin{equation}\nonumber   \lim_{\delta \to 0} \sup_t \int_{|r| \le \delta} r \nu(t,r)dr = 0. 
   \end{equation}
  \end{itemize}
  Then,  by Theorem 4.1 in \cite{KVFDE},  the  process interrupted (and forced to land exactly at $t=a$)  on the first attempt to cross the barrier  point $t=a$  is a Feller process on $[a,b]$ and  has the  generator
  \begin{align}
  -D_{a+*}^{(\nu)} h(t)   &= \int_0^{t-a} ( h(t-r) - h(t)) \nu(t,r)dr + (h(a) - h(t)) \int_{t-a}^{\infty}\nu(t,r)dr, \quad t \in [a,b]\label{genaP}
  \end{align}
  with the invariant core $C^1 [a, b]$ and a domain denoted by $\mathfrak{D}_{a+*}^{(\nu)}$. Note that, since  the process is  decreasing, the interruption procedure   effectively means  stopping the process at the boundary.
   
Moreover, if the  process is also killed  at the barrier point $t=a$ (meaning analytically to set $h(a) = 0$) then the corresponding  Feller sub-Markov process on $(a,b]$   has the  generator
  \begin{align}
  -D_{a+}^{(\nu)} h(t)  &:=  \int_0^{t-a} ( h(t-r) - h(t)) \nu(t,r)dr -  h(t) \int_{t-a}^{\infty}\nu(t,r)dr,\quad t \in (a,b],\label{genaP2}
  \end{align}
  with the invariant core $C_a^1[a,b]$ and a domain $\mathfrak{D}_{a+}^{(\nu)}$.
\begin{definition}
  Let  $\nu$ be a function satisfying condition (H0).   The  operator in (\ref{genaP})   will be called   \textit{the  $D_{*}$-operator   defined by $\nu$} and will be denoted by $- D_{a+*}^{(\nu)}$ (the sign $-$  is introduced to comply with the standard notation of fractional derivatives). Similarly, the operator in (\ref{genaP2}) will be called \textit{the  $D$-operator   defined by $\nu$} and will be denoted by $- D_{a+}^{(\nu)}$.
 \end{definition}
 \begin{remark}
Since we are interested in solutions to equations on finite intervals $[a,b]$, we have applied the results in \cite{KVFDE} to the case of stochastic processes taking values on  $(-\infty,b]$ for some $b \in \mathbb{R}$. 
 \end{remark}
By the standard theory of Feller processes, the domain of the generators  $-D_{a+*}^{(\nu)}$ and $-D_{a+}^{(\nu)}$ coincides with the images of their corresponding   \textit{resolvent operators},
 $R_{\lambda}^{a+*(\nu)}$ and $R_{\lambda}^{a+(\nu)}$ (for any $\lambda > 0$), respectively.  Namely, 
\begin{equation}\nonumber
\mathfrak{D}_{a+*}^{(\nu)} = \left \{ u_g\,\Big |\ u_g(t) := R_{\lambda}^{a+*(\nu)} g(t);\,\, \,g \in C[a, b]  \right \}, 
\end{equation}
and
\begin{equation}\nonumber 
\mathfrak{D}_{a+}^{(\nu)} = \left \{ u_g \,\Big |\ u_g(t) := R_{\lambda}^{a+(\nu)} g(t);\,\, g \in C_a[a, b] \right \}. 
\end{equation}
Moreover, the images of the resolvent operators are independent of $\lambda$ (for details see, e.g.,  \cite{dynkin1965}).
The operators in  (\ref{genaP})   and  (\ref{genaP2}) can be thought of as the generalization of the Caputo and the Riemann-Liouville operators of order $\beta \in (0,1)$, respectively. 

Particular cases of the $D_{*}-$ operators include   the \textit{multi-term  fractional operators}:\begin{equation}\nonumber
-\,D_{a+*}^{(\nu)} h(t) := \,-\, \sum_{i=1}^d \omega_i (t)D_{a+*}^{\beta_i} h(t),   
\end{equation}
with non-negative  functions $\omega_i(\cdot) \ge 0$ and\[ \nu (t,r) = \,-\, \sum_{i=1}^d \omega_i(t) \frac{1}{\Gamma(-\beta_i) r^{1+\beta_i}}.\] 
Even more generally,   they include the case
\begin{equation}\label{Gdistributed}
-\,D_{a+*}^{(\nu)} h(t) :=\, -\, \int_{-\infty}^{\infty} \omega(s,t) D_{a+*}^{\beta(s,t)} h(t)\, \mu (ds),   
\end{equation}
 with  \[ \nu(t,r)= \,-\,\int_{-\infty}^{\infty} \omega (s,t) \frac{ds}{\Gamma(-\beta(s,t))r^{1+\beta(s,t)}}, \] satisfying condition ($H0$).  Particular cases of (\ref{Gdistributed}) have been studied e.g. in \cite{distributed1, distributed2}. 

\subsection{Classical fractional derivatives} 
 
For any  $\beta \in (0,1)$,  the  fractional operators  $D_{a+*}^{\beta}$ and $D_{a+}^{\beta}$  are obtained as particular cases of $D_{*}$- and $D$-operators, respectively. Namely,  on smooth enough functions $h$,   
\[ \text{if }\quad  \small{ \nu(t,r)=\, -\, \frac{1}{\Gamma(-\beta) r^{1+\beta}} }, \normalsize  \quad \beta \in (0,1) \quad   \text{ then }\quad 
\begin{array}{lcl} 
-D_{a+*}^{(\nu)} h (t)  &= & \,-\,D_{a+*}^{\beta} h(t), \\
-D_{a+}^{(\nu)} h (t)  &= & \,-\,D_{a+}^{\beta} h(t).      
\end{array}
\]
Thus, $-D_{a+*}^{\beta}$ is the generator of a Feller process on $[a, b]$ which is obtained by stopping an inverted  $\beta$-stable subordinator $T^{+\beta}$  in the attempt on crossing the boundary point $t=a$.  The RL derivative $-D_{a+}^{\beta}$ is the generator obtained by killing  $T^{+\beta}$ at the boundary,  see  \cite{KVFDE}.
 \begin{remark}
 The probabilistic interpretation of fractional derivatives of order $\beta \in (1,2)$ was also analyzed  in \cite{KVFDE}  but this case will not be needed  here.
 \end{remark}
 
 \subsection{Fractional derivatives of position-dependent  order} 
 For a given function $\beta: \mathbb{R} \to (0,1)$,  define 
 \begin{equation}\label{betax}
 \nu(t,r) =  \,- \, \frac{1}{\Gamma(-\beta(t))  r^{1+ \beta(t)} }.
 \end{equation}
 \begin{lemma}\label{T41bx}
  If $\beta : \mathbb{R} \to (0,1)$ is a continuously differentiable  function with values in a compact subset of $(0,1)$, then the function  defined  in (\ref{betax}) satisfies condition (H0).
    \end{lemma}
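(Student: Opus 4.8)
\bigskip
\noindent\textit{Proof strategy.}\
The plan is to rewrite $\nu$ in a form that isolates the effect of the compact range of $\beta$, and then to verify the three requirements of condition (H0) by direct estimates, in each $r$-integral treating the region near $r=0$ and the tail $r\to\infty$ separately. Using $\Gamma(-\beta)=\Gamma(1-\beta)/(-\beta)$, write
\[
\nu(t,r)=c(\beta(t))\,r^{-1-\beta(t)},\qquad c(\beta):=\frac{\beta}{\Gamma(1-\beta)}>0\quad(\beta\in(0,1)).
\]
By hypothesis there are $0<\beta_-\le\beta_+<1$ with $\beta(\mathbb{R})\subset[\beta_-,\beta_+]$, and $\beta\mapsto c(\beta)$ is smooth and strictly positive on $(0,1)$; hence there are finite constants $c_+,c_1$ with $c(\beta(t))\le c_+$ and $|c'(\beta(t))|\le c_1$ for all $t$, the latter because $c'(\beta)=\frac{1}{\Gamma(1-\beta)}\bigl(1+\beta\,\psi(1-\beta)\bigr)$, with $\psi$ the digamma function continuous on $(0,1)$ and $1-\beta\in[1-\beta_+,1-\beta_-]$. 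Continuity of $\nu$ on $\mathbb{R}\times(0,\infty)$ is then immediate, and the chain rule gives
\[
\frac{\partial}{\partial t}\nu(t,r)=\beta'(t)\,r^{-1-\beta(t)}\bigl[\,c'(\beta(t))-c(\beta(t))\ln r\,\bigr],
\]
which is continuous in $(t,r)$ as well; this settles the regularity part of (H0).

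Next I would verify the integral conditions. Near the origin, $\int_0^1 r\,\nu(t,r)\,dr=c(\beta(t))\int_0^1 r^{-\beta(t)}\,dr=c(\beta(t))/(1-\beta(t))\le c_+/(1-\beta_+)$ uniformly in $t$, and, using the display above together with $|c'(\beta(t))-c(\beta(t))\ln r|\le(c_1+c_+)(1+|\ln r|)$,
\[
\int_0^1 r\,\Bigl|\frac{\partial}{\partial t}\nu(t,r)\Bigr|\,dr\le(c_1+c_+)\,\Bigl(\sup_t|\beta'(t)|\Bigr)\int_0^1 r^{-\beta_+}\bigl(1+|\ln r|\bigr)\,dr,
\]
which is finite because $\beta_+<1$ makes both $r^{-\beta_+}$ and $r^{-\beta_+}|\ln r|$ integrable at $0$. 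For the tail (where, as is natural for the L\'evy-type kernel in (\ref{Adecreasing}), the relevant integrals carry no extra factor of $r$), $\int_1^\infty\nu(t,r)\,dr=c(\beta(t))/\beta(t)\le c_+/\beta_-$, and similarly $\int_1^\infty|\partial_t\nu(t,r)|\,dr\le(c_1+c_+)\bigl(\sup_t|\beta'(t)|\bigr)\int_1^\infty r^{-1-\beta_-}(1+\ln r)\,dr<\infty$, both finite because $\beta_->0$. Finally, for the third requirement,
\[
\int_{|r|\le\delta} r\,\nu(t,r)\,dr=c(\beta(t))\,\frac{\delta^{1-\beta(t)}}{1-\beta(t)}\le\frac{c_+}{1-\beta_+}\,\delta^{1-\beta_+}\longrightarrow 0\quad(\delta\to0),
\]
uniformly in $t$.

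These estimates are essentially routine; the one point that deserves care is the logarithmic factor produced by $\partial_t\nu$. It is integrable against $r^{-\beta}\,dr$ near $r=0$ precisely because $\sup_t\beta(t)=\beta_+<1$ (for instance $r^{-\beta}|\ln r|\le C_\varepsilon\,r^{-(\beta_++\varepsilon)}$ for small $\varepsilon$ with $\beta_++\varepsilon<1$), while its growth at infinity is dominated precisely because $\inf_t\beta(t)=\beta_->0$; thus the hypothesis that the range of $\beta$ is a compact subset of the \emph{open} interval $(0,1)$, not merely that $\beta$ is $(0,1)$-valued, is used in an essential way. I also note that the estimates for the $\partial_t\nu$-integrals invoke $\sup_t|\beta'(t)|<\infty$: the displays above show $\sup_t\int_0^1 r\,|\partial_t\nu(t,r)|\,dr+\sup_t\int_1^\infty|\partial_t\nu(t,r)|\,dr\le C\,\sup_t|\beta'(t)|$, so boundedness of $\beta'$ is the single ingredient needed beyond continuity.
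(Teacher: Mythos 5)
Your proof is correct and takes essentially the same (direct-verification) route as the paper, whose entire proof is the one line ``follows by the smoothness of the function $\beta$ in a compact set of $(0,1)$''; your estimates, including the sensible reading of the integral conditions in (H0) with the tail carrying no extra factor of $r$ (necessary even for constant $\beta$), simply fill in what that line leaves implicit. The only caveat --- which you flag yourself and which the paper's one-line proof glosses over equally --- is that $\beta\in C^1(\mathbb{R})$ with range in a compact subset of $(0,1)$ does not by itself yield $\sup_t|\beta'(t)|<\infty$, so your bound on the $\partial_t\nu$-integrals tacitly strengthens the hypothesis to boundedness of $\beta'$.
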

    \begin{proof} Follows by the smoothness of the function $\beta$ in a compact set of $(0,1)$.  \end{proof} 
    Lemma  \ref{T41bx}   allows us to define  $D_{*}-$operators by using   (\ref{betax}). They are  denoted by $-D_{a+*}^{(\nu)}  \equiv \,- \, D_{a+*}^{\beta(t)} $ and can be seen as    generators of  inverted   \textit{stable-like processes} \cite{KV0} with jump density (\ref{betax}) which are stopped at the boundary point $t = a$. These operators will  be  referred as \textit{Caputo-type operators of position dependent order}.  Analogously, the RL type operators defined by (\ref{betax}) will be denoted by $-D_{a+}^{(\nu)} \equiv -D_{a+}^{\beta(t)}$.

 \begin{remark}
 Note that in the previous case the 'order' of the derivative depends  only on $t$.  Some other extensions can be made by taking the  function $\nu$ depending on external variables. 
This case (which we analyze  in detail in a forthcoming paper)   allows us to deal with   operators  of the form 
 \begin{equation}\nonumber 
 \left ( \,-\, _{t}\!D_{ \,a+*}^{\beta(t,\textbf{x})}  - A_{\textbf{x}}^{(t)}  \, \right ) h(t,\textbf{x}), \quad t \ge a, \,\, \textbf{x} \in \mathbb{R}^d,
 \end{equation}
 where $-\,_{t}\!D_{a+*}^{\beta(t,\textbf{x})} $ denotes the Caputo type  derivative acting on the variable $t$ and depending on the variable  $\textbf{x}$ as a parameter; and   $-\,A_{\textbf{x}}^{(t)}$ denotes the generator of a  Feller process acting on the variable $\textbf{x}$ and depending on the variable $t$ as a parameter.
\end{remark}
  
\section{Properties of the underlying stochastic processes}
In this section we study some facts about the underlying stochastic processes generated by the operators $-D_{a+*}^{(\nu)}$ and $-D_{a+}^{(\nu)}$. These results shall be used to obtain the explicit  solutions to the linear equations involving $D_{*}-$ and $D-$operators. 

For a given  function $\nu$  satisfying condition (H0) and for $t \in (a, b]$, the notation  \begin{equation}\nonumber 
  \, T^{+(\nu)}_t , \,-D^{+(\nu)},  \, T^{a+*(\nu)}_t, \, -\,D_{a+*}^{(\nu)},\,\,  \,T_{t}^{a+(\nu)} \,\,\,\text{ and } -D_{a+}^{(\nu)},
  \end{equation}
means the following:  $T^{+(\nu)}_t = \{T^{+(\nu)}_t (s)\,:\,s\ge 0\}$  is the  decreasing Feller process (started at $t$)   generated by  $-D^{+(\nu)}$ as given in  (\ref{Adecreasing});  $T_{t}^{a+*(\nu)} = \{T^{a+*(\nu)}_t (s)\,:\,s\ge 0\}$ stands for  the Feller process generated by $ -\,D_{a+*}^{(\nu)}$ with the invariant core   $C^1[a, b]$,  which arises by  interrupting $T_t^{+(\nu)}$ on an attempt to cross the boundary point $a$; and  $T_{t}^{a+(\nu)}= \{T^{a+(\nu)}_t (s)\,:\,s\ge 0\}$ denotes  the  Feller sub-Markov process generated by $ -D_{a+}^{(\nu)}$ with the invariant core   $C_a[a,b]$,  which arises by   killing $T_t^{a+*(\nu)}$ at the point $a$.

For $t \in [a,b]$, notation  $\tau_a^{t,(\nu)}$ refers to  the first time the process $T_{t}^{+(\nu)}$ (or the process $T_{t}^{a+*(\nu)}$) leaves $(a,b]$, i.e.
 \begin{equation} \nonumber   \tau_a^{t,(\nu)}  :=  \inf \left \{ s \ge 0\,:\, T_{t}^{+(\nu)} (s) \notin (a,b]\, \right \} = \inf \left \{ s \ge 0\,:\, T_{t}^{a+*(\nu)} (s) \notin (a,b]\,\right \},
  \end{equation}
and, of course, $\tau_a^{a,(\nu)}= 0$. Note that $ \tau_a^{t,(\nu)}$  is a stopping time with respect to    $\mathcal{F}_s^{T_{t}^{+(\nu)}}$. 
 Further,  $\tau_a^{t,(\nu)}$ is also the first exit time from $(a,b]$ of the killed process $T_t^{a+(\nu)}$. 
 
Let $p_s^{+(\nu)}(t,E):= \mathbf{P} \left[ T_t^{+(\nu)}(s) \in E \,|\, T_t^{+(\nu)} (0) = t \right ]$ be the transition probabilities of $T_t^{+(\nu)}$ from $t$ to $E \in \mathcal{B}(-\infty,b]$ during the interval $[0,s]$. Note that if  $p_s^{a+*(\nu)}(t,E)$   denotes the corresponding transition probabilities of $T_t^{a+*(\nu)}$, then
\begin{equation}\nonumber p_s^{a+*(\nu)}(r,E) = \left \{ 
\begin{array}{ll}
 p_s^{+(\nu)}(r,E),& E \in \mathcal{B}(a, b] \\
p_s^{+(\nu)} (r, (-\infty,a]), & E = \{a\}.
 \end{array}
 \right . \quad r \in (a,b].
\end{equation}
Since the process is stopped at $a$, 
\begin{align*}
p_s^{a+*(\nu)} (t,[a,t]) &=  p_s^{+(\nu)} (t,(a,t]) + p_s^{a+*(\nu)} (t,\{a\}) = 1,
\end{align*}
and   $\,p_s^{a+*(\nu)}(a,E) = 1$ whenever  $E \in \mathcal{B}[a,b]$ and $a \in E$.  Furthermore, if $p_s^{a+(\nu)}(r,E)$ denotes the transition probabilities of  $T_t^{a+(\nu)}$, then  $p_s^{a+(\nu)}(r,E) = p_s^{+(\nu)} (r, E)$ for all $r \in (a,t]$ and $E \in \mathcal{B} (a,b]$. Moreover, 
\begin{align*}
p_s^{a+(\nu)} (t,(a,t]) &=  p_s^{+(\nu)} (t,(a,t])  = 1 -  p_s^{+(\nu)} (t,(-\infty,a])  \le  1.
\end{align*}
The previous implies that 
\[ p_s^{+(\nu)} (r,E) = p_s^{a+*(\nu)} (r,E) = p_s^{a+(\nu)} (r,E), \quad r \in(a,t],\]
on Borel sets $E$ of $(a,b]$.

  Sometimes we will use the following  additional assumptions   concerning the   function $\nu$  and the transition  probabilities  of the underlying process $T^{+(\nu)}$:
\begin{itemize}
\item [(H1)]  There exist $\epsilon >0$ and $\delta > 0$  such that  $ \nu (t,r) \ge \delta > 0$ for all $t$ and all $|r| < \epsilon$.
\item [(H2)]  The  transition probabilities of the process  $T^{+(\nu)}$ are absolutely continuous with respect to the Lebesgue measure (the transition densities will be denoted by $p_s^{+(\nu)}(r,y)$). \item [(H3)] The transition density function $p_{s}^{+(\nu)}(r,y)$ is continuously differentiable in the variable $s$.
\end{itemize}
\begin{remark}
There exists several criteria in terms of $\nu$ that ensure the validity of ($H2$) and ($H3$), see, e.g.,  \cite{H}.
\end{remark}

\begin{lemma}\label{propH0}
Suppose the conditions (H0)-(H1)  hold for a function $\nu$.  
Then,  the stopping time $\tau_a^{t,(\nu)}$ is finite a.s. and $\mathbf{E}\left [\tau_a^{t,(\nu)} \right ] < + \infty$ uniformly on $t\in (a,b]$. Further, the point $a$ is  regular in expectation  for both  operators $-D_{a+*}^{(\nu)}$ and $-D_{a+}^{(\nu)}$, i.e.  $\mathbf{E}\left [\tau_a^{t,(\nu)} \right ] \to 0$ as $ t \downarrow a$.
\end{lemma}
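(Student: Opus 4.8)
The plan is to realise $\tau_a^{t,(\nu)}$ as the first passage below $a$ of a monotone decreasing process and to control it with two complementary tools: a ``first suitable jump'' estimate furnished by the nondegeneracy hypothesis (H1), and Dynkin's formula applied to the linear test function, whose drift is finite precisely because of (H0). Throughout I would work with the stopped process $X_s:=T^{a+*(\nu)}_t(s)$, which stays in $[a,b]$, has nonincreasing paths in $s$, and by construction equals $a$ for $s\ge\tau:=\tau_a^{t,(\nu)}$ (no overshoot); recall that $\tau$ is simultaneously the first exit time from $(a,b]$ of $T^{+(\nu)}_t$, of $T^{a+*(\nu)}_t$ and of $T^{a+(\nu)}_t$, so it suffices to treat one of them.

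For the uniform bound $\sup_{t\in(a,b]}\mathbf E[\tau]<\infty$ (whence also $\tau<\infty$ a.s.) the cleanest route is a tiling argument. Fix a partition $a=\ell_0<\ell_1<\dots<\ell_n=t$ with $\ell_i-\ell_{i-1}\le\epsilon/2$, so $n\le N:=\lceil 2(b-a)/\epsilon\rceil$ uniformly in $t$. Writing $\sigma_\ell:=\inf\{s\ge0:X_s\le\ell\}$, monotonicity of the paths gives $\tau=\sigma_{\ell_0}=\sum_{i=1}^n\big(\sigma_{\ell_{i-1}}-\sigma_{\ell_i}\big)$. Using the strong Markov property at $\sigma_{\ell_i}$ together with (H1): from any position in $(\ell_{i-1},\ell_i]$ a single jump of size in $(\epsilon/2,\epsilon)$ lands the process below $\ell_{i-1}$, and jumps of such sizes occur with intensity $\int_{\epsilon/2}^{\epsilon}\nu(\cdot,r)\,dr\ge\delta\epsilon/2$ at all times; hence each increment $\sigma_{\ell_{i-1}}-\sigma_{\ell_i}$ is stochastically dominated by an $\mathrm{Exp}(\delta\epsilon/2)$ variable, and $\mathbf E[\tau]\le 2N/(\delta\epsilon)$, which is finite and independent of $t$.

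For the ``regular in expectation'' statement I would complement this with Dynkin's formula applied to $V(x)=x\in C^1[a,b]$ (the invariant core) and to $X_{s\wedge\tau}$. Set $\psi(x):=\int_0^\infty\min(r,x-a)\,\nu(x,r)\,dr$; from \eqref{genaP} one computes $-D_{a+*}^{(\nu)}V(x)=-\psi(x)$, and $0\le\psi(x)\le\sup_t\int r\,\nu(t,r)\,dr<\infty$ by (H0) (so $V$ indeed lies in the domain and the formula applies), while (H1) yields $\psi(x)\ge\tfrac{\delta}{2}\big(\min(\epsilon,x-a)\big)^2>0$ on $(a,b]$. Since $X_{s\wedge\tau}\ge a$, Dynkin's identity $\mathbf E[X_{s\wedge\tau}]=t-\mathbf E\!\int_0^{s\wedge\tau}\psi(X_u)\,du$ gives, after letting $s\to\infty$,
\[
 \mathbf E\!\left[\int_0^{\tau}\psi(X_u)\,du\right]\le t-a .
\]
Combined with the monotonicity of $t\mapsto\mathbf E[\tau_a^{t,(\nu)}]$ — obtained by coupling two copies started at different levels so that the higher one coincides with the lower one from the moment it first descends to the lower starting level — and with the uniform tail bound already established (which provides an integrable dominating function for $\mathbf P(\tau_a^{t,(\nu)}>s)$), letting $t\downarrow a$ one concludes $\mathbf E[\tau_a^{t,(\nu)}]\to 0$. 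The corresponding statements for $-D^{(\nu)}_{a+}$ then follow at once since the killed process has the same exit time from $(a,b]$.

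The step I expect to be the main obstacle is exactly the passage to the limit $t\downarrow a$: the ``downward push'' $\psi(x)$ degenerates as $x\downarrow a$, so the displayed inequality does not by itself bound $\mathbf E[\tau]$, and one must use the monotonicity of the paths to argue that the time the process spends in a small neighbourhood $(a,a+\eta]$ is negligible — it is here that (H0)–(H1) (a jump intensity that is finite yet nondegenerate near the origin) must be used carefully, e.g. through a geometric refinement of the tiling near $a$ or through dominated convergence for $\mathbf P(\tau_a^{t,(\nu)}>s)$. Everything else — the Feller/strong Markov properties, the validity of Dynkin's formula, the computation of $-D_{a+*}^{(\nu)}V$ — is routine given the results recalled in Sections 3–4.
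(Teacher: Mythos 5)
Your first half --- a.s. finiteness of $\tau_a^{t,(\nu)}$ and the uniform bound on $\mathbf{E}\bigl[\tau_a^{t,(\nu)}\bigr]$ --- is correct, and it is essentially the paper's own argument made explicit: the paper settles this in one line by comparing $T^{+(\nu)}_t$ with a compound Poisson process with L\'evy kernel $\delta\mathbf{1}_{[0,\epsilon]}(r)$, and your tiling of $[a,t]$ into at most $N=\lceil 2(b-a)/\epsilon\rceil$ strips, with each strip-crossing time stochastically dominated by an $\mathrm{Exp}(\delta\epsilon/2)$ variable (jumps of size in $(\epsilon/2,\epsilon)$ have intensity at least $\delta\epsilon/2$ at every position, by (H1)), is precisely the quantitative content of that comparison. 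The computation $-D_{a+*}^{(\nu)}V=-\psi$ for $V(x)=x$ via (\ref{genaP}) is also correct.

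The gap is the final assertion, $\mathbf{E}\bigl[\tau_a^{t,(\nu)}\bigr]\to 0$ as $t\downarrow a$, which you flag yourself but do not close, and the tools you propose cannot close it. The Dynkin inequality $\mathbf{E}\int_0^{\tau}\psi(X_u)\,du\le t-a$ gives nothing because $\psi$ vanishes at $a$; and monotonicity of $t\mapsto\mathbf{E}\bigl[\tau_a^{t,(\nu)}\bigr]$ plus dominated convergence could at best show that a limit exists, never that it equals zero. (The coupling you sketch for monotonicity is also problematic: a decreasing pure-jump process started at the higher level generically overshoots the lower starting level, so the two copies never meet, and since $\nu(t,r)$ depends on the position $t$ such monotonicity is not automatic.) Your unease here is in fact well founded: the process has no drift, so it cannot leave $(a,b]$ before its first jump, and (H0)--(H1) allow $\int_0^{\infty}\nu(t,r)\,dr$ to remain bounded near $a$, in which case $\mathbf{E}\bigl[\tau_a^{t,(\nu)}\bigr]$ is bounded below by a positive constant. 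A complete argument for the vanishing must therefore exploit unboundedness of the jump intensity at $r=0+$ (as holds in all the stable-type examples the paper has in view), e.g.\ through the elementary bound $\mathbf{E}\bigl[\tau_a^{t,(\nu)}\bigr]\le 1/\inf_{x\in(a,t]}\int_{t-a}^{\infty}\nu(x,r)\,dr$, since from any $x\in(a,t]$ a jump of size exceeding $t-a$ exits $(a,b]$; the paper's one-line comparison with the compound Poisson kernel is itself silent on this point, but in any case your proposal as written does not establish the last statement of the lemma.
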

\begin{proof}
 The result   follows  by comparing the process $T_t^{+(\nu)}$ with a compound Poisson process with L\'evy kernel $\nu(r) = \delta \mathbf{1}_{\{[0,\epsilon]\}}(r)$ for which the result holds. 
\end{proof}
\begin{remark}
It is worth stressing that the assumption (H0) is weaker to the one used in Theorem 4.1 in \cite{KVFDE}.
\end{remark}
Observe now that, since $T_{t}^{a+*(\nu)}$ is a decreasing process, the equivalence between the  events $\left \{\, \tau_a^{t,(\nu)} > s\,\right \}$ and  $\left \{\,T_{t}^{a+*(\nu)} (s) > a\,\right \}$   for $t \in (a,b]$ and all $s > 0$ implies
    \begin{align}
  \mathbf{P}\left [\tau_a^{t,(\nu)}  > s\right ]  &=  \mathbf{P} \left [ T_{t}^{a+*(\nu)} (s) > a \right ]  =   \int_{(a,t]} p_s^{a+*(\nu)} (t,r)dr = 1- \int_{-\infty}^a p_s^{+(\nu)} (t,r)dr, \nonumber  
   \end{align}
     yielding the following result.
  \begin{proposition}\label{muTau}
   Suppose the conditions (H0)-(H3) hold. Then, the probability law of $\tau_a^{t,(\nu)} $, denoted by $\mu_a^{t, (\nu)} (ds)$,   is absolutely continuous  with respect to  Lebesgue measure  for $t \in(a,b]$ and its density $\mu_a^{t,(\nu)}(s)$ is  given by
    \begin{equation}\label{lawofTau}
\mu_a^{t,(\nu)} (s)  =   \frac{\partial }{\partial s}  \int_{-\infty}^a p_s^{+(\nu)} (t,r)dr = \,-\, \frac{\partial}{\partial s} \int_a^t p_s^{+(\nu)} (t,r)dr.  
    \end{equation}
    \end{proposition}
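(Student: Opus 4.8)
The plan is to read off the distribution function of $\tau_a^{t,(\nu)}$ from the transition densities of the underlying decreasing process and then differentiate in the time variable $s$. As noted just before the statement, the monotonicity of $T_t^{a+*(\nu)}$ gives the equality of events $\{\tau_a^{t,(\nu)} > s\} = \{T_t^{a+*(\nu)}(s) > a\}$, so that for $t\in(a,b]$ the distribution function is
\begin{equation}\nonumber
F_t(s) := \mathbf{P}\left[\tau_a^{t,(\nu)} \le s\right] = \int_{-\infty}^a p_s^{+(\nu)}(t,r)\,dr ,
\end{equation}
which is meaningful under (H2). First I would observe that, since the process started at $t$ is decreasing and takes values in $(-\infty,b]$ with $t\le b$, it never leaves $(-\infty,t]$; hence $\int_{-\infty}^t p_s^{+(\nu)}(t,r)\,dr = 1$ for every $s\ge 0$, and therefore
\begin{equation}\nonumber
F_t(s) = 1 - \int_a^t p_s^{+(\nu)}(t,r)\,dr .
\end{equation}
The point of this rewriting is that the remaining integral runs over the \emph{bounded} interval $[a,t]$, which makes the subsequent differentiation routine, whereas differentiating under $\int_{-\infty}^a$ directly would require extra care near $-\infty$.

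Next I would show that $F_t\in C^1(0,\infty)$; this is exactly the assertion that the law $\mu_a^{t,(\nu)}$ is absolutely continuous, with density $F_t'$. By (H3) the map $s\mapsto p_s^{+(\nu)}(t,r)$ is continuously differentiable for each fixed $r$, and what is needed to differentiate under the integral over the compact interval $[a,t]$ is a bound on $|\partial_s p_s^{+(\nu)}(t,r)|$ that is integrable in $r\in[a,t]$ and locally uniform in $s$; this is what (H2)--(H3) supply. The standard theorem on differentiation under the integral sign (equivalently, dominated convergence applied to the difference quotients) then yields
\begin{equation}\nonumber
F_t'(s) = -\int_a^t \frac{\partial}{\partial s} p_s^{+(\nu)}(t,r)\,dr = -\frac{\partial}{\partial s}\int_a^t p_s^{+(\nu)}(t,r)\,dr ,
\end{equation}
and $F_t'$ is continuous in $s$ by the same argument. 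This gives the second expression in (\ref{lawofTau}). The first expression follows immediately: since $\int_{-\infty}^a p_s^{+(\nu)}(t,r)\,dr + \int_a^t p_s^{+(\nu)}(t,r)\,dr = 1$ for all $s$, differentiating in $s$ gives $\frac{\partial}{\partial s}\int_{-\infty}^a p_s^{+(\nu)}(t,r)\,dr = -\frac{\partial}{\partial s}\int_a^t p_s^{+(\nu)}(t,r)\,dr = \mu_a^{t,(\nu)}(s)$. Finally, $\mu_a^{t,(\nu)}$ integrates to $1$ on $(0,\infty)$ because $F_t(0)=0$ (as $\tau_a^{t,(\nu)}>0$ a.s. for $t>a$) and $F_t(s)\to 1$ as $s\to\infty$ by Lemma \ref{propH0}.

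The only genuinely technical point in the whole argument is the interchange of $\partial/\partial s$ with the integral, and reducing first to the finite interval $[a,t]$ is precisely the device that makes it harmless; hypotheses (H2)--(H3) are imposed exactly so that the required locally-uniform integrable bound on $|\partial_s p_s^{+(\nu)}(t,\cdot)|$ is available. If one prefers not to invoke joint regularity of the transition density, it is enough to postulate such a dominating bound directly, which is the content (H3) is meant to encode, and the argument above then goes through verbatim.
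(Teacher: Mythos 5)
Your proposal is correct and follows essentially the same route as the paper: the paper obtains the distribution function from the event equivalence $\{\tau_a^{t,(\nu)}>s\}=\{T_t^{a+*(\nu)}(s)>a\}$, writes $\mathbf{P}[\tau_a^{t,(\nu)}>s]=1-\int_{-\infty}^a p_s^{+(\nu)}(t,r)\,dr$, and differentiates in $s$ under (H2)--(H3), exactly as you do. Your extra care about differentiating under the integral over the bounded interval $[a,t]$ is a reasonable elaboration of a step the paper leaves implicit.
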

We shall also need the joint distribution of  $T_{t}^{a+*(\nu)}(s)$ and $\tau_a^{t,(\nu)}$  for any  $s\ge 0$. Notice that for any $ a \le r < t $, \begin{align*}
\mathbf{P}\left [ T_{t}^{a+*(\nu)}(s) > r,\,\tau_a^{t,(\nu)} > \xi \,\right ]  =  \mathbf{P} \left [ T_{t}^{a+*(\nu)}(s) > r, \, T_{t}^{a+*(\nu)}(\xi) > a\right ].  
\end{align*}
Moreover, $\xi \le s$ implies
\begin{equation} \nonumber 
\mathbf{P}\left [T_{t}^{a+*(\nu)} (s)> r, \,  T_{t}^{a+*(\nu)}(\xi) > a\right ] = \mathbf{P}\left [T_{t}^{a+*(\nu)} (s)> r\right ],Ê
\end{equation}
whilst  for $s < \xi$,
\begin{align} \nonumber 
\mathbf{P}\left [T_{t}^{a+*(\nu)} (s)> r, \, T_{t}^{a+*(\nu)}(\xi) > a \right ]  &= \int_r^t p_s^{a+*(\nu)}(t,w) \left ( \int_a^w p_{\xi-s}^{a+*(\nu)} (w,y)dy\right ) dw \\
&= \int_r^t p_s^{+(\nu)}(t,w) \left ( 1 - \int_{-\infty}^a p_{\xi-s}^{+(\nu)} (w,y)dy \right ) dw.
\nonumber 
\end{align}
Therefore, 
\begin{align} 
\varphi_{s,a}^{t,(\nu)} (r,\xi):= \frac{\partial^2}{\partial \xi \partial r} \mathbf{P}\left [ T_{t}^{a+*(\nu)}(s) \le  r,\, \tau_a^{t,(\nu)} \le \xi \right ] = \frac{\partial^2}{\partial \xi \partial r}  \mathbf{P} \left [T_{t}^{a+*(\nu)}(s) > r, \,  \tau_a^{t,(\nu)} > \xi \right ],  \nonumber 
\end{align}
yields the next result.
\begin{proposition}\label{jointP}
 Suppose the conditions (H0)-(H3) hold.  Then, for any $s \ge 0$ and $t \in (a,b]$,   the joint distribution  of  the pair $\left (T_{t}^{a+*(\nu)} (s), \tau_a^{t,(\nu)}\right )$, denoted by $\varphi_{s,a}^{t,(\nu)} (dr,d\xi)$, has the density given by
\begin{align} 
\varphi_{s,a}^{t,(\nu)}(r,\xi) :&= 
\, \, \mathbf{1}_{\{s< \xi\}} p_s^{+(\nu)}(t,r) \frac{\partial}{\partial \xi} \int_{-\infty}^a p_{\xi-s}^{+(\nu)}(r,y) dy, &  \,\, a \le r < t. \label{densityJoint}
\end{align}
\end{proposition}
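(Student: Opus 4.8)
The plan is to obtain the joint density by conditioning on the position of the process at time $s$ via the Markov property and then differentiating the resulting survival function, making rigorous the computation sketched immediately before the statement. First I would record a purely analytic reduction. By inclusion--exclusion,
\[
\mathbf{P}\left[T_t^{a+*(\nu)}(s)\le r,\,\tau_a^{t,(\nu)}\le\xi\right]=1-\mathbf{P}\left[T_t^{a+*(\nu)}(s)>r\right]-\mathbf{P}\left[\tau_a^{t,(\nu)}>\xi\right]+\mathbf{P}\left[T_t^{a+*(\nu)}(s)>r,\,\tau_a^{t,(\nu)}>\xi\right],
\]
and since the first three terms on the right each depend on at most one of the variables $r,\xi$ (or are constant), applying $\partial^2/\partial\xi\,\partial r$ annihilates them. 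Hence $\varphi_{s,a}^{t,(\nu)}(r,\xi)=\partial^2/\partial\xi\,\partial r\,\mathbf{P}\bigl[T_t^{a+*(\nu)}(s)>r,\,\tau_a^{t,(\nu)}>\xi\bigr]$, so everything reduces to computing and differentiating this one survival probability.

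Next I would treat the cases $\xi\le s$ and $\xi>s$ separately. When $\xi\le s$, the monotonicity of the (decreasing) paths of $T_t^{a+*(\nu)}$ gives $\{T_t^{a+*(\nu)}(s)>r\}\subseteq\{T_t^{a+*(\nu)}(\xi)>a\}=\{\tau_a^{t,(\nu)}>\xi\}$ for $r\ge a$, so the survival probability collapses to $\mathbf{P}[T_t^{a+*(\nu)}(s)>r]$, which is independent of $\xi$; its mixed partial therefore vanishes, and this is precisely the origin of the factor $\mathbf{1}_{\{s<\xi\}}$. When $\xi>s$, I would condition on $\mathcal{F}_s^{T_t^{+(\nu)}}$: on $\{T_t^{a+*(\nu)}(s)=w\}$ with $w\in(a,b]$ one automatically has $\tau_a^{t,(\nu)}>s$, and by the (ordinary) Markov property and time-homogeneity of $T^{+(\nu)}$ the event $\{\tau_a^{t,(\nu)}>\xi\}$ becomes the event that an independent copy of $T^{+(\nu)}$ started at $w$ stays above $a$ throughout time $\xi-s$, i.e.\ has probability $1-\int_{-\infty}^a p_{\xi-s}^{+(\nu)}(w,y)\,dy$. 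Using the identification $p_s^{a+*(\nu)}(t,w)=p_s^{+(\nu)}(t,w)$ on Borel subsets of $(a,b]$ established in Section 4, this produces the integral expression displayed just before the proposition.

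Finally I would differentiate that integral. Differentiating in $r$ by the fundamental theorem of calculus gives $-\,p_s^{+(\nu)}(t,r)\bigl(1-\int_{-\infty}^a p_{\xi-s}^{+(\nu)}(r,y)\,dy\bigr)$, and then differentiating in $\xi$ the two minus signs cancel, leaving exactly $p_s^{+(\nu)}(t,r)\,\partial_\xi\int_{-\infty}^a p_{\xi-s}^{+(\nu)}(r,y)\,dy$, which is formula (\ref{densityJoint}) on the region $\{a\le r<t\}$. I expect the only delicate point to be this last analytic step: justifying that the mixed second partial of the distribution function is an honest joint density on the indicated region, in particular interchanging the two differentiations and differentiating under the integral sign over $(-\infty,a]$. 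This is exactly where hypotheses (H2) and (H3) enter, since they guarantee that the transition densities $p_s^{+(\nu)}(\cdot,\cdot)$ exist and are $C^1$ in the time variable; the probabilistic content, namely the conditioning argument and the monotonicity observation, is routine once the transition probabilities of the stopped process have been related to those of $T^{+(\nu)}$.
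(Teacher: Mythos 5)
Your proposal is correct and follows essentially the same route as the paper: reduce the mixed partial of the distribution function to that of the survival probability, split into $\xi\le s$ (where monotonicity makes the probability independent of $\xi$) and $s<\xi$ (where the Markov property gives the double-integral expression), and then differentiate in $r$ and $\xi$, with (H2)--(H3) justifying the existence and smoothness of the densities involved. Your observations on the cancellation of signs and the origin of the indicator $\mathbf{1}_{\{s<\xi\}}$ match the paper's computation displayed just before the proposition.
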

\begin{remark}
Since the processes $T_t^{a+(\nu)}$,  $T_t^{a+*(\nu)}$ and $T_t^{+(\nu)}$ coincide before the first exit time $\tau_a^{t,(\nu)}$, the equation (\ref{densityJoint}) provides the joint density of $(T_{t}^{a+(\nu)} (s), \tau_a^{t,(\nu)})$  and   $(T_{t}^{a+(\nu)} (s), \tau_a^{t,(\nu)})$, for any $s \ge 0$ and for  $s < \xi$, respectively.
\end{remark}
By definition of the generator of a Feller process (see, e.g. \cite{EK}, \cite{KV0}),   if $S_s^{a+*(\nu)}$  is the semigroup of the stopped process $T_t^{a+*(\nu)}$, then $u \in \mathfrak{D}_{a+*}^{(\nu)}$ if,  and only if,
\begin{equation}\nonumber
-D_{a+*}^{(\nu)} u = \lim_{s\downarrow 0}\frac{ S_s^{a+*(\nu)} u - u}{s},
\end{equation}
where the limit is in the sense of the norm in $C[a,b]$. Analogously, if $S_s^{a+(\nu)}$ is the semigroup of  the killed process $T_t^{a+(\nu)}$, then $u \in \mathfrak{D}_{a+}^{(\nu)}$ if,  and only if, 
\begin{equation}\nonumber
-D_{a+}^{(\nu)} u = \lim_{s\downarrow 0}\frac{ S_s^{a+(\nu)} u - u}{s},
\end{equation}
where  the limit is in the sense of the norm in  $C_a[a,b]$.

Let us now introduce an operator which will play and important role to characterize the  domain of the generators $-D_{a+*}^{(\nu)}$ and $-D_{a+}^{(\nu)}$. 

For $\lambda \ge 0$,  define   
\begin{equation}\label{defM}
 M_{a,\lambda}^{+(\nu)} g(t)  := \mathbf{E} \left [  \int_0^{\tau_a^{t, (\nu)}} e^{-\lambda s} g \left (T_t^{+(\nu)} (s)\right) ds\right ],\quad t \in (a,b],  
 \end{equation}
  for  any (non constant) function $g \in B[a,b]$,   and  
\begin{equation}\label{defM2}
 M_{a,\lambda}^{+(\nu)} 1(t)  := \mathbf{E} \left [  \int_0^{\tau_a^{t, (\nu)}} e^{-\lambda s}  ds\right ],\quad t \in [a,b],  
 \end{equation}
 when  $g(t)  \equiv 1(t)$ (the constant function 1). 
 Then
\begin{equation}\label{M1}
  M_{a,\lambda}^{+(\nu)} \cdot 1 (t)=  \frac{1}{\lambda} \left ( 1 - \mathbf{E} \left[ e^{-\lambda \tau_a^{t, (\nu)}}\right ]\right ), 
\end{equation}
implying
\begin{equation}\nonumber 
  \mathbf{E} \left[ e^{-\lambda \tau_a^{t, (\nu)}}\right ] =  1 - \lambda M_{a,\lambda}^{+(\nu)} \cdot 1 (t). 
\end{equation}
Further,  
 \begin{equation}
   \nonumber
 M_{a,\lambda}^{+(\nu)} c  = c  M_{a,\lambda}^{+(\nu)} \cdot 1(t),   \quad t \in [a,b],
 \end{equation}
for any constant function equals to $c$ (we shall use it mainly for the constant $g(a)$).
Note that the   stochastic continuity of the process $T_t^{a+*(\nu)}$ implies that  $M_{a, \lambda}^{(\nu)}g(\cdot)$ is  continuous on $[a,b]$. Moreover,
\[ | M_{a,\lambda}^{+(\nu)} g(t)| \le ||g|| \sup_{t \in [a,b]} \mathbf{E} \left[ \tau_a^{t,(\nu)}\right ].\]
\begin{lemma}\label{MLaplace-MJoint}
Suppose that $\nu$ satisfies  the conditions (H0)-(H3). Then
\begin{equation}\label{MLaplace}
\mathbf{E} \left[ e^{-\lambda \tau_a^{t, (\nu)}}\right ]  = \int_0^{\infty} e^{-\lambda s} \left(   \frac{\partial }{\partial s}  \int_{-\infty}^a p_s^{+(\nu)} (t,r)dr \right)ds, \quad t \in (a,b];
\end{equation}
and for any $g \in B[a,b]$
\begin{equation}\label{MJoint}
M_{a,\lambda}^{+(\nu)} g(t)  =   \int_0^{t-a} g(t-r) \int_0^{\infty} e^{-\lambda s}    p_s^{+(\nu)}(t,t-r)  \, ds\, dr, \quad t \in (a,b].  \end{equation}
\end{lemma}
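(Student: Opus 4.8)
The plan is to express both sides as iterated integrals and identify them using Fubini's theorem together with the identities already established in Proposition \ref{muTau} and Proposition \ref{jointP}. For \eqref{MLaplace}, I would start from the fact, noted just after \eqref{M1}, that $\mathbf{E}[e^{-\lambda \tau_a^{t,(\nu)}}]$ is the Laplace transform of the distribution $\mu_a^{t,(\nu)}(ds)$ of the stopping time $\tau_a^{t,(\nu)}$. Under hypotheses (H0)--(H3), Proposition \ref{muTau} gives that this law is absolutely continuous with density $\mu_a^{t,(\nu)}(s) = \frac{\partial}{\partial s}\int_{-\infty}^a p_s^{+(\nu)}(t,r)\,dr$. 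Substituting this density into $\mathbf{E}[e^{-\lambda\tau_a^{t,(\nu)}}] = \int_0^\infty e^{-\lambda s}\mu_a^{t,(\nu)}(s)\,ds$ yields \eqref{MLaplace} immediately. One should check that $\tau_a^{t,(\nu)}$ has no atom at $0$ for $t \in (a,b]$ and no mass at infinity, which follows from Lemma \ref{propH0} (finiteness a.s. and regularity in expectation of the point $a$).

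For \eqref{MJoint}, I would write out the definition \eqref{defM} and use the fact that, before the exit time, the processes $T_t^{+(\nu)}$ and $T_t^{a+*(\nu)}$ coincide, so that
\[
M_{a,\lambda}^{+(\nu)} g(t) = \mathbf{E}\left[\int_0^\infty e^{-\lambda s}\, g\!\left(T_t^{a+*(\nu)}(s)\right)\mathbf{1}_{\{s < \tau_a^{t,(\nu)}\}}\,ds\right].
\]
By Tonelli's theorem (justified by $g$ bounded and $\mathbf{E}[\tau_a^{t,(\nu)}] < \infty$ from Lemma \ref{propH0}) this equals $\int_0^\infty e^{-\lambda s}\,\mathbf{E}\!\left[g\!\left(T_t^{a+*(\nu)}(s)\right)\mathbf{1}_{\{s<\tau_a^{t,(\nu)}\}}\right]ds$. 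Now I would integrate against the joint density $\varphi_{s,a}^{t,(\nu)}(r,\xi)$ from Proposition \ref{jointP}: on the event $\{s < \tau_a^{t,(\nu)}\}$ one integrates $\xi$ over $(s,\infty)$ and $r$ over $(a,t]$, and the $\xi$-integral of $\frac{\partial}{\partial\xi}\int_{-\infty}^a p_{\xi-s}^{+(\nu)}(r,y)\,dy$ from $s$ to $\infty$ telescopes to $1$ (using $p_0^{+(\nu)}(r,\cdot) = \delta_r$, hence $\int_{-\infty}^a p_0^{+(\nu)}(r,y)\,dy = 0$ for $r > a$, while the limit as $\xi \to \infty$ is the full exit probability, which is $1$ by Lemma \ref{propH0}). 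This collapses the joint density to the marginal $p_s^{+(\nu)}(t,r)$, giving
\[
\mathbf{E}\!\left[g\!\left(T_t^{a+*(\nu)}(s)\right)\mathbf{1}_{\{s<\tau_a^{t,(\nu)}\}}\right] = \int_a^t g(r)\,p_s^{+(\nu)}(t,r)\,dr.
\]
A change of variables $r \mapsto t-r$ and one more application of Tonelli to swap the $s$- and $r$-integrals produces \eqref{MJoint}.

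The main obstacle I expect is the careful justification of the interchanges of integration and of the telescoping/limit argument for the $\xi$-integral: one must confirm that $\lim_{\xi\to\infty}\int_{-\infty}^a p_{\xi-s}^{+(\nu)}(r,y)\,dy = 1$, i.e., that the decreasing process started at $r\in(a,t]$ eventually crosses $a$ with probability one, which is exactly the a.s. finiteness of $\tau_a^{r,(\nu)}$ from Lemma \ref{propH0}. The absolute continuity and differentiability needed to even write $\varphi_{s,a}^{t,(\nu)}$ and $\mu_a^{t,(\nu)}(s)$ as honest densities is where hypotheses (H2)--(H3) enter; everything else is bookkeeping with Tonelli's theorem, which applies because $g$ is bounded and the expected exit time is uniformly bounded on $[a,b]$.
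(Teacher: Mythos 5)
Your proposal is correct and follows essentially the same route as the paper: the paper proves \eqref{MLaplace} by inserting the density $\mu_a^{t,(\nu)}(s)$ from Proposition \ref{muTau} into the Laplace transform, and proves \eqref{MJoint} by first applying Fubini to write $M_{a,\lambda}^{+(\nu)}g(t)=\int_0^\infty e^{-\lambda s}\mathbf{E}[\mathbf{1}_{\{\tau_a^{t,(\nu)}>s\}}g(T_t^{+(\nu)}(s))]\,ds$, then integrating against the joint density of Proposition \ref{jointP} and collapsing the $\xi$-integral to $1$ (via the change of variable $\gamma=\xi-s$ and the fact that $\mu_a^{r,(\nu)}$ is a probability density), followed by the same reordering and change of variables you describe. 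Your extra remarks on justifying the interchanges and the telescoping limit are consistent with, and slightly more explicit than, the paper's argument.
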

\begin{proof}
Equality (\ref{MLaplace}) follows directly by using the density function $\mu_a^{t,(\nu)}$ of the r.v. $\tau_a^{t, (\nu)}$ as given in (\ref{lawofTau}).   To prove  (\ref{MJoint}), observe that Fubini's theorem allows one to rewrite $M_{a,\lambda}^{+(\nu)} g(t)$ as
\[ M_{a,\lambda}^{+(\nu)} g(t)  = \int_0^{\infty} e^{-\lambda s} \mathbf{E} \left[  \mathbf{1}_{\{\tau_a^{t, (\nu)} > s\}}  g\left ( T_t^{+(\nu)}(s)\right)\right]ds. \]
Using (\ref{densityJoint}), i.e.,  the joint density $\varphi_{s,a}^{t,(\nu)}(r,\xi)$   of the process $\left (T_t^{+(\nu)}(s), \tau_a^{t,(\nu)} \right )$ for $s < \xi$, yields
\begin{align*}
M_{a,\lambda}^{+(\nu)} g(t) &= \int_0^{\infty} e^{-\lambda s}   \left [ \int_a^t \int_0^{\infty} \mathbf{1}_{\{\xi > s\}}  g\left (r\right) \varphi_{s,a}^{t,(\nu)}(r,\xi) \,d\xi \,dr  \right]ds \\
&=  \int_0^{\infty} e^{-\lambda s}   \int_a^t  g(r) p_s^{+(\nu)}(t,r) \int_s^{\infty}   \left ( \frac{\partial}{\partial \xi} \int_{-\infty}^a p_{\xi-s}^{+(\nu)}(r,y) dy \right ) \, d\xi \, dr\, ds \\
&=  \int_0^{\infty} e^{-\lambda s}   \int_a^t  g(r) p_s^{+(\nu)}(t,r) \int_0^{\infty}   \left ( \frac{\partial}{\partial \gamma} \int_{-\infty}^a p_{\gamma}^{+(\nu)}(r,y) dy \right ) \, d\gamma \, dr\, ds \\
&=  \int_0^{\infty} e^{-\lambda s}   \int_a^t  g(r) p_s^{+(\nu)}(t,r) \int_0^{\infty}  \mu_a^{r,(\nu)} (\gamma) d\gamma  \, dr\, ds   \\
&=  \int_0^{\infty} e^{-\lambda s}   \int_a^t  g(r) p_s^{+(\nu)}(t,r)  \, dr\, ds,  
\end{align*}
where the last equality holds as $\mu_a^{r, (\nu)}$ is the density function of the r.v. $\tau_a^{r, (\nu)}$.   This  implies the result by another interchange in the order of integration and by a change of variable.
\end{proof}
\begin{remark}
Equality (\ref{MLaplace}) can be written as
\begin{equation}\label{secondLaplace}
\mathbf{E} \left[ e^{-\lambda \tau_a^{t, (\nu)}}\right ]  = \lambda \int_0^{\infty} e^{-\lambda s} \left( \int_{-\infty}^a p_s^{+(\nu)}(t,r)dr \right)ds,\quad  t \in (a,b], 
\end{equation}
which follows by integration by parts. 
\end{remark}
Let us now define the following space of functions:  
\begin{equation}\label{Ma}
\mathfrak{M_{a, \lambda}^{+(\nu)}} := \left \{ \,\,u\,\, \Big | u (t) = c M_{a,\lambda}^{+(\nu)} \cdot 1 (t) + d; \,\,\, t \in [a,b],\,\, c,d \in \mathbb{R}  \right\}.
\end{equation}
\begin{lemma}\label{domains}
Let $\nu$ be a function satisfying conditions  (H0)-(H1). If  $\lambda >0$,  then 
\begin{equation} \nonumber
\mathfrak{D}_{a+*}^{(\nu)} = \left\{ u_g \,\Big |\, u_g(t) = g(a)\frac{1}{\lambda } \left( 1 - \lambda M_{a,\lambda}^{+(\nu)} \cdot 1 (t) \right ) + M_{a,\lambda}^{+(\nu)} g(t),  \quad g \in C[a,b]    \right \},
\end{equation} 
and
\begin{equation}\nonumber
\mathfrak{D}_{a+}^{(\nu)}= \left\{ w_g \,\Big |\, w_g(t) =  M_{a,\lambda}^{+(\nu)} g(t),  \quad g \in C_a[a,b] \right \}.
\end{equation}
Further, if $\nu$ also satisfies (H2)-(H3), then equalities  (\ref{MLaplace}) and (\ref{MJoint}) give  explicit expressions for $\left( 1 - \lambda M_{a,\lambda}^{+(\nu)} \cdot 1 (t) \right )$ and  $M_{a,\lambda}^{+(\nu)} g(t)$, respectively. 
\end{lemma}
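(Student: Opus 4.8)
The plan is to identify each domain with the range of the corresponding resolvent operator and then evaluate that resolvent probabilistically. As recalled just after the definition of the $D_{*}$- and $D$-operators, for any fixed $\lambda>0$ one has $\mathfrak{D}_{a+*}^{(\nu)}=R_{\lambda}^{a+*(\nu)}(C[a,b])$ and $\mathfrak{D}_{a+}^{(\nu)}=R_{\lambda}^{a+(\nu)}(C_a[a,b])$, and by (\ref{resolvent}) the resolvents admit the stochastic representations $R_{\lambda}^{a+*(\nu)}g(t)=\mathbf{E}\big[\int_0^{\infty}e^{-\lambda s}g(T_t^{a+*(\nu)}(s))\,ds\big]$ and $R_{\lambda}^{a+(\nu)}g(t)=\mathbf{E}\big[\int_0^{\infty}e^{-\lambda s}g(T_t^{a+(\nu)}(s))\,ds\big]$ (the interchange of expectation and time integral being legitimate since $g$ is bounded and $\lambda>0$). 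So it suffices to show that the first expression equals $g(a)\frac{1}{\lambda}\big(1-\lambda M_{a,\lambda}^{+(\nu)}\cdot 1(t)\big)+M_{a,\lambda}^{+(\nu)}g(t)$ for $g\in C[a,b]$, and that the second equals $M_{a,\lambda}^{+(\nu)}g(t)$ for $g\in C_a[a,b]$.

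For the stopped process I would split the inner integral at the first exit time $\tau_a^{t,(\nu)}$, which is finite a.s.\ with finite expectation by Lemma \ref{propH0}. Strictly before $\tau_a^{t,(\nu)}$ the process $T_t^{a+*(\nu)}$ coincides with $T_t^{+(\nu)}$, while from $\tau_a^{t,(\nu)}$ on it is absorbed at the boundary point $a$; hence
\[
R_{\lambda}^{a+*(\nu)}g(t)=\mathbf{E}\Big[\int_0^{\tau_a^{t,(\nu)}}e^{-\lambda s}g\big(T_t^{+(\nu)}(s)\big)\,ds\Big]+g(a)\,\mathbf{E}\Big[\int_{\tau_a^{t,(\nu)}}^{\infty}e^{-\lambda s}\,ds\Big].
\]
The first summand is precisely $M_{a,\lambda}^{+(\nu)}g(t)$ by (\ref{defM}); the second equals $\frac{g(a)}{\lambda}\mathbf{E}\big[e^{-\lambda\tau_a^{t,(\nu)}}\big]$, and inserting $\mathbf{E}\big[e^{-\lambda\tau_a^{t,(\nu)}}\big]=1-\lambda M_{a,\lambda}^{+(\nu)}\cdot 1(t)$ from (\ref{M1}) gives the stated formula. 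The finiteness and continuity on $[a,b]$ of $M_{a,\lambda}^{+(\nu)}g$ follow from $\sup_t\mathbf{E}[\tau_a^{t,(\nu)}]<\infty$ (Lemma \ref{propH0}) together with the stochastic continuity of $T_t^{a+*(\nu)}$, as already remarked in the text.

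For the killed process the same decomposition applies, except that past $\tau_a^{t,(\nu)}$ the trajectory has been sent to the cemetery, where every $g\in C_a[a,b]$ is (by convention) evaluated as $0$; since the killed process also coincides with $T_t^{+(\nu)}$ strictly before $\tau_a^{t,(\nu)}$, this leaves $R_{\lambda}^{a+(\nu)}g(t)=\mathbf{E}\big[\int_0^{\tau_a^{t,(\nu)}}e^{-\lambda s}g(T_t^{+(\nu)}(s))\,ds\big]=M_{a,\lambda}^{+(\nu)}g(t)$, which is the second set equality (and is consistent with $w_g(a)=0$, since $\tau_a^{a,(\nu)}=0$). The closing assertion is then immediate: when (H2)--(H3) additionally hold, Lemma \ref{MLaplace-MJoint} supplies (\ref{MLaplace}) for $1-\lambda M_{a,\lambda}^{+(\nu)}\cdot 1(t)=\mathbf{E}\big[e^{-\lambda\tau_a^{t,(\nu)}}\big]$ and (\ref{MJoint}) for $M_{a,\lambda}^{+(\nu)}g(t)$.

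I expect the only delicate point to be the bookkeeping in the splitting step: justifying rigorously that the absorbed (resp.\ killed) process behaves exactly as claimed after $\tau_a^{t,(\nu)}$, which rests on the strong Markov property and on the explicit construction of these interrupted processes in Section \ref{definitionD} (via Theorem 4.1 of \cite{KVFDE}), and checking that all the expectations involved are finite so that Fubini's theorem legitimises every interchange of time integral and expectation. That finiteness is precisely what Lemma \ref{propH0} (and hence hypothesis (H1)) is there to guarantee, while the positivity $\lambda>0$ is what makes $\int_{\tau_a^{t,(\nu)}}^{\infty}e^{-\lambda s}\,ds$ integrable; everything else is routine.
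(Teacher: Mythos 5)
Your proposal is correct and follows essentially the same route as the paper: identify the domains with the ranges of the resolvents (the paper invokes Theorem 1.1 of \cite{dynkin1965} for this), split the time integral at $\tau_a^{t,(\nu)}$, use that the stopped (resp.\ killed) process coincides with $T_t^{+(\nu)}$ before $\tau_a^{t,(\nu)}$ and sits at $a$ (resp.\ contributes nothing) afterwards, and conclude with (\ref{M1}) and Lemma \ref{MLaplace-MJoint}. No substantive difference from the paper's argument.
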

\begin{proof}
Let us take any $u \in \mathfrak{D}_{a+*}^{(\nu)}$. Since $-D_{a+*}^{(\nu)}$ is the generator of a Feller process on $C[a,b]$,  Theorem 1.1 in  \cite{dynkin1965} implies the existence of  a function $g \in C[a,b]$ such that $u = R_{\lambda}^{a+*(\nu)} g$. By definition of the resolvent and by Fubini's theorem 
\begin{align*}
u(t)&= \mathbf{E} \left[   \int_0^{\infty} e^{-\lambda s}  g(T_t^{a+*(\nu)} (s))ds  \right] = \mathbf{E} \left [  \left(  \int_0^{\tau_a^{t,(\nu)}} + \int_{\tau_a^{t,(\nu)}}^{\infty} \right) e^{-\lambda s}  g(T_t^{a+*(\nu)} (s))ds,  \right], \end{align*}
where $\mathbf{E}\left [\tau_a^{t, (\nu)} \right ] < +\infty$  by Lemma \ref{propH0}. 

Since the process is stopped at time $\tau_a^{t, (\nu)}$, $g\left (T_t^{a+*(\nu)} (s)\right ) = g(a)$ for all $s \ge \tau_a^{t, (\nu)}$. Moreover, before time $\tau_a^{t, (\nu)}$ the processes $T_t^{a+*(\nu)}$ and $T_t^{+(\nu)}$ coincide. Therefore,
\begin{align}\nonumber
u(t) &= g(a) \mathbf{E} \left [  \int_{\tau_a^{t,(\nu)}}^{\infty} e^{-\lambda s}  ds  \right] + \mathbf{E} \left [   \int_0^{\tau_a^{t,(\nu)}} e^{-\lambda s}  g(T_t^{+(\nu)} (s))ds  \right] \\ \nonumber
&=g(a) \left \{\frac{1}{\lambda} -  \mathbf{E} \left [ \int_0^{\tau_a^{t,(\nu)}} e^{-\lambda s}  ds  \right] \right\} + \mathbf{E} \left [   \int_0^{\tau_a^{t,(\nu)}} e^{-\lambda s}  g(T_t^{+(\nu)} (s))ds  \right] \\ \label{desarrollo}
&=g(a)\frac{1}{\lambda } \left( 1 - \lambda M_{a,\lambda}^{+(\nu)} \cdot 1 (t) \right ) + M_{a,\lambda}^{+(\nu)} g(t),
\end{align}
as required. 
The characterization of the domain  $\mathfrak{D}_{a+}^{(\nu)}$ is similar to the previous case. Take any $w \in \mathfrak{D}_{a+}^{(\nu)}$, then there exists a function $g \in C_a[a, b]$ such that $w = R_{\lambda}^{a+(\nu)} g$. Hence, a similar procedure  yields (\ref{desarrollo}) and $g(a) = 0$ implies
\[  R_{\lambda}^{a+(\nu)} g(t) = M_{a,\lambda}^{+(\nu)} g(t).\]
Finally, observe that under assumptions (H2)-(H3),  Lemma \ref{MLaplace-MJoint} holds.  
\end{proof}
Let us now see how the resolvents (and hence the domains) of the stopped and killed processes are related to.   
\begin{lemma}\label{relation}
Let $\nu$ be a function satisfying condition (H0). Suppose  $\lambda > 0$ and $g\in C[a, b]$. Define $\tilde{g} (t) = g(t)- g(a)$, then
 \begin{align*}
R_{\lambda}^{a+(\nu)} \tilde{g}(t) = R_{\lambda}^{a+*(\nu)} \tilde{g}(t)  = R_{\lambda}^{a+*(\nu)} g(t) -g(a) R_{\lambda}^{a+*(\nu)} \cdot 1(t), \end{align*}
and
 \begin{equation}\label{repM}
R_{\lambda}^{a+(\nu)} \tilde{g}(t) = M_{a,\lambda}^{+(\nu)} g(t) - g(a)M_{a,\lambda}^{+(\nu)}\cdot 1(t). 
\end{equation}
In particular,  $R_{\lambda}^{a+(\nu)} \tilde{g}(t)$ belongs to both domains $\mathfrak{D}_{a+*}^{(\nu)}$ and $\mathfrak{D}_{a+}^{(\nu)} $.
\end{lemma}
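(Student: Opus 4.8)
The plan is to work directly from the probabilistic representations of the two resolvents and reduce everything to the identity already established in Lemma \ref{domains}. First I would record that for a killed process the resolvent acting on a function vanishing at $a$ is just the $M$-operator: since $\tilde g \in C_a[a,b]$ (because $\tilde g(a) = g(a) - g(a) = 0$), Lemma \ref{domains} applied to the killed process gives immediately $R_{\lambda}^{a+(\nu)} \tilde g(t) = M_{a,\lambda}^{+(\nu)} \tilde g(t)$. So the content of the lemma is really a collection of bookkeeping identities linking $M_{a,\lambda}^{+(\nu)} \tilde g$, $M_{a,\lambda}^{+(\nu)} g$, $M_{a,\lambda}^{+(\nu)} \cdot 1$, and the stopped resolvent $R_{\lambda}^{a+*(\nu)}$.

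Second, I would exploit the key observation that before the exit time $\tau_a^{t,(\nu)}$ the stopped and killed processes coincide with $T_t^{+(\nu)}$, and that on the event $s \ge \tau_a^{t,(\nu)}$ the stopped process sits at $a$ where $\tilde g$ vanishes. Splitting the time integral in the definition of $R_{\lambda}^{a+*(\nu)}\tilde g(t)$ at $\tau_a^{t,(\nu)}$ exactly as in the proof of Lemma \ref{domains}, the tail contribution is $\tilde g(a)\,\mathbf{E}[\int_{\tau_a^{t,(\nu)}}^{\infty} e^{-\lambda s}ds] = 0$, leaving $R_{\lambda}^{a+*(\nu)}\tilde g(t) = \mathbf{E}[\int_0^{\tau_a^{t,(\nu)}} e^{-\lambda s}\tilde g(T_t^{+(\nu)}(s))\,ds] = M_{a,\lambda}^{+(\nu)}\tilde g(t)$. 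This shows $R_{\lambda}^{a+(\nu)}\tilde g(t) = R_{\lambda}^{a+*(\nu)}\tilde g(t)$, the first claimed equality.

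Third, linearity of the expectation defining $M_{a,\lambda}^{+(\nu)}$ (and of the resolvent) gives $M_{a,\lambda}^{+(\nu)}\tilde g = M_{a,\lambda}^{+(\nu)} g - g(a) M_{a,\lambda}^{+(\nu)}\cdot 1$, which is precisely (\ref{repM}); and by Lemma \ref{domains} (or directly by the same splitting argument applied to $g$ and to the constant $1$) $M_{a,\lambda}^{+(\nu)} g = R_{\lambda}^{a+*(\nu)} g - g(a)\frac{1}{\lambda}(1 - \lambda M_{a,\lambda}^{+(\nu)}\cdot 1)$ together with $M_{a,\lambda}^{+(\nu)}\cdot 1 = \frac{1}{\lambda}(1 - \mathbf{E}[e^{-\lambda\tau_a^{t,(\nu)}}])$ from (\ref{M1}) and $R_{\lambda}^{a+*(\nu)}\cdot 1(t) = \frac{1}{\lambda}$; substituting these yields $M_{a,\lambda}^{+(\nu)} g - g(a) M_{a,\lambda}^{+(\nu)}\cdot 1 = R_{\lambda}^{a+*(\nu)} g - g(a) R_{\lambda}^{a+*(\nu)}\cdot 1$, completing the middle chain of equalities. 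Finally, $R_{\lambda}^{a+(\nu)}\tilde g = M_{a,\lambda}^{+(\nu)}\tilde g \in \mathfrak{D}_{a+}^{(\nu)}$ by Lemma \ref{domains}, and since $\tilde g \in C[a,b]$ the same quantity equals $R_{\lambda}^{a+*(\nu)}\tilde g \in \mathfrak{D}_{a+*}^{(\nu)}$, giving the membership in both domains.

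The only genuinely delicate point is justifying the Fubini interchange and the pathwise splitting at $\tau_a^{t,(\nu)}$, i.e.\ that $\mathbf{E}[\int_0^\infty e^{-\lambda s}|\tilde g(T_t^{a+*(\nu)}(s))|\,ds] < \infty$ so that the decomposition into the pre-$\tau$ and post-$\tau$ parts is legitimate; this is immediate from $\|\tilde g\| < \infty$, $\lambda > 0$, and the uniform bound $\sup_{t}\mathbf{E}[\tau_a^{t,(\nu)}] < \infty$ supplied by Lemma \ref{propH0} under (H0)--(H1) — but note the statement of the present lemma only assumes (H0), so one should instead appeal directly to $\lambda > 0$ (which makes $\int_0^\infty e^{-\lambda s}\|\tilde g\|\,ds = \|\tilde g\|/\lambda < \infty$ without any moment bound on $\tau$), after which dominated convergence and Fubini apply unconditionally. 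Everything else is elementary algebra with the identities already in hand.
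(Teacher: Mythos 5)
Your proof is correct and follows essentially the same route as the paper, whose own argument is just the one-line observation that everything reduces to linearity of $R_{\lambda}^{a+*(\nu)}$ and $M_{a,\lambda}^{+(\nu)}$ together with $\tilde g(a)=0$ (so that $\tilde g\in C_a[a,b]$ and the post-$\tau$ contribution in the stopped resolvent vanishes), exactly the steps you spell out via Lemma \ref{domains}. Your closing remark that the Fubini/splitting step can be justified from $\lambda>0$ alone, rather than through the moment bound of Lemma \ref{propH0} which would need (H1), is a sensible way to reconcile the weaker hypothesis (H0) stated in the lemma.
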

\begin{proof}
Follows directly from the linearity of the operators $R_{\lambda}^{a+*(\nu)}$ and $M_{a,\lambda}^{(\nu)}$, and by using  that $\tilde{g}(a) = 0$. 
\end{proof}
\begin{remark}
Let us stress that  Lemma \ref{relation} implies  that  $M_{a,\lambda}^{+(\nu)}g$ coincides with the resolvent $R_{\lambda}^{a+(\nu)}g$ only when the function $g(a) = 0$. Hence, only in this case $M_{a,\lambda}^{+(\nu)}g$ belongs to the domain of both generators $-D_{a+*}^{(\nu)}$ and $-D_{a+}^{(\nu)}$.   
 \end{remark}
It is worth noting that Theorem 1.1 in \cite{dynkin1965} also guarantees that for  $ g \in C[a, b]$ and $\lambda > 0$, the function  $u_g:=R_{\lambda}^{a+*(\nu)} g$ is the unique  \textit{solution in the domain} of $-D_{a+*}^{(\nu)}$ to 
\begin{equation}\label{udomain}
-D_{a+*}^{(\nu)} u(t) = \lambda u(t) - g(t), \quad t \in [a,b].
\end{equation}
 Similarly, if $g \in C_a[a,b]$, then  $w = R_{\lambda}^{a+(\nu)}g$ is the unique \textit{solution in the domain} of $-D_{a+}^{(\nu)}$ to
\begin{equation}\nonumber
-D_{a+}^{(\nu)} w(t) = \lambda w(t) - g(t), \quad t \in [a,b].
\end{equation}
 \section{Equations involving $D-$ and $D_{*}$-  operators}
The probabilistic representation of solutions (in the generalized sense) to linear equations involving $D$- and $D_{*}$-operators will be studied in this section. 
\subsection{Linear equations involving $D$-operators}
Consider the problem of finding a continuous function $u$ on  $[a,b]$ satisfying
 \begin{equation}\label{eqlinear10}
-D_{a+}^{(\nu)} w(t)  =  \lambda  w(t) - g(t), \quad t \in [a,b], \quad \quad w(a) = w_a,
\end{equation}
for $\lambda \ge 0$, $g \in B[a,b]$ and $w_a = 0$. Hereafter, we shall refer to (\ref{eqlinear10}) as the RL type  problem $(-D_{a+}^{(\nu)}, \lambda, g, w_a)$, wherein we shall  always take  $w_a=0$.  Similar notation will be used for the linear equation with the corresponding Caputo type operator: $(-D_{a+*}^{(\nu)}, \lambda, g, w_a)$ for any $w_a \in \mathbb{R}$.
\begin{definition}\label{D:GRL}
Let $g \in B[a,b]$ and $\lambda \ge 0$. A function  $w \in C_a[a,b]$ is said to solve the RL type problem  $(-D_{a+}^{(\nu)}, \lambda,g, 0)$ as
 \begin{itemize} \item [(i)]   a \emph{solution in the domain of the generator}  if $w$ satisfies (\ref{eqlinear10}) and belongs to the domain of the generator $-D_{a+}^{(\nu)}$; \item  [(ii)]  a \emph{generalized solution} if for all sequence of functions $g_n \in C_a[a,b]$ such that $\sup_n ||g_n|| < \infty$ uniformly on $n$, and $\lim_{n\to \infty} g_n \to g$ a.e.,  it holds that  $w(t) = \lim_{n\to \infty} w_n(t)$ for all $t \in [a,b]$, where $w_n$ is the solution (in the domain of the generator)  to the RL problem $(-D_{a+}^{(\nu)}, \lambda, g_n, 0)$.\end{itemize}
\end{definition}

\begin{remark} From this definition it follows that if there exists a generalized solution, then this is unique.
\end{remark}
\begin{definition} The RL type equation (\ref{eqlinear10})   is  \textit{well-posed in the generalized sense} if  it has a unique generalized  solution.
\end{definition}

\noindent \textbf{Well-posedness result for the RL type linear equation.}
\begin{theorem} (\textbf{Case $\lambda > 0$}) \label{wellposedness1}
Let $\nu$ be a function satisfying conditions (H0)-(H1) and assume  $\lambda > 0$.    
\begin{itemize}
\item [(i)] If $g\in C_a[a,b]$, then the  linear problem   $(-D_{a+}^{(\nu)}, \lambda, g,0)$  has a unique solution in the domain of the generator   given by  $w = R_{\lambda}^{a+(\nu)} g$ (the resolvent operator at $\lambda$).

\item [(ii)] For any $g \in B[a,b]$,   the linear equation   $(-D_{a+}^{(\nu)}, \lambda, g,0)$ is well-posed in the generalized sense and   the solution admits the stochastic representation
\begin{equation}\label{MsolRL}
w(t) =  \mathbf{E} \left[Ê \int_0^{\tau_a^{t,(\nu)}}  e^{-\lambda s} g \left( T_t^{+(\nu)} (s) ds \right )\right ].
\end{equation}
Moreover, if additionally $\nu$ satisfies conditions (H2)-(H3), then
\begin{equation}\label{solTheoremRL}
w(t) =  \int_0^{t-a} g(t-r)  \int_0^{\infty} e^{-\lambda s} p_s^{+(\nu)}(t,t-r)  \,ds\,\, dr.
\end{equation}
\item [(iii)] If $g\in C[a,b]$, then the solution  to (\ref{eqlinear10}) belongs to $\mathfrak{D}_{a+}^{(\nu)} \oplus \mathfrak{M}_{a,\lambda}^{+(\nu)}$ (the direct sum of the domain of the generator $-D_{a+}^{(\nu)}$ and the space defined in (\ref{Ma})). 
\end{itemize}
 \end{theorem}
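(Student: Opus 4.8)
The plan is to prove the three parts by building on the resolvent characterization of the domain (Lemma \ref{domains}) and the definition of a generalized solution. For part (i), I would invoke Dynkin's Theorem 1.1 in \cite{dynkin1965} directly: since $-D_{a+}^{(\nu)}$ is the generator of a Feller process on $C_a[a,b]$ and $g \in C_a[a,b]$, the function $w = R_{\lambda}^{a+(\nu)} g$ is the unique element of $\mathfrak{D}_{a+}^{(\nu)}$ solving $-D_{a+}^{(\nu)} w = \lambda w - g$ (this is already noted at the end of Section 4). Then, by Lemma \ref{domains} with $g(a) = 0$, we have $R_{\lambda}^{a+(\nu)} g = M_{a,\lambda}^{+(\nu)} g$, which is exactly the stochastic representation (\ref{MsolRL}), and conditions (H2)-(H3) upgrade this to (\ref{solTheoremRL}) via equality (\ref{MJoint}) of Lemma \ref{MLaplace-MJoint}.

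For part (ii) with general $g \in B[a,b]$, I would take any sequence $g_n \in C_a[a,b]$ with $\sup_n \|g_n\| < \infty$ and $g_n \to g$ a.e., let $w_n = R_{\lambda}^{a+(\nu)} g_n = M_{a,\lambda}^{+(\nu)} g_n$ be the corresponding solution in the domain from part (i), and show $w_n(t)$ converges for each $t$ to the right-hand side of (\ref{MsolRL}). The convergence is a dominated-convergence argument: from (\ref{defM}),
\[
w_n(t) - w(t) = \mathbf{E}\left[ \int_0^{\tau_a^{t,(\nu)}} e^{-\lambda s}\left( g_n - g \right)\!\left( T_t^{+(\nu)}(s) \right) ds \right],
\]
and the integrand is dominated by $\sup_n \|g_n\|$ on $[0, \tau_a^{t,(\nu)}]$, which is integrable since $\mathbf{E}[\tau_a^{t,(\nu)}] < \infty$ uniformly on $(a,b]$ by Lemma \ref{propH0}. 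The main technical point here is justifying that $g_n(T_t^{+(\nu)}(s)) \to g(T_t^{+(\nu)}(s))$ for $ds \times d\mathbf{P}$-almost every $(s,\omega)$: since $g_n \to g$ off a Lebesgue-null set $N$, this requires that the occupation measure $\mathbf{E}[\int_0^{\tau} \mathbf{1}_N(T_t^{+(\nu)}(s))\,ds] = 0$, which follows from (H2) (absolute continuity of the transition probabilities) together with the occupation-density formula implicit in (\ref{MJoint}). Uniqueness of the generalized solution is immediate from the remark following Definition \ref{D:GRL}. The representation (\ref{solTheoremRL}) under (H2)-(H3) then follows by passing to the limit in (\ref{MJoint}) applied to $g_n$, or equivalently by applying Lemma \ref{MLaplace-MJoint} directly with $g \in B[a,b]$ (which is permitted, as (\ref{MJoint}) is stated for all $g \in B[a,b]$).

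For part (iii), with $g \in C[a,b]$ (not necessarily vanishing at $a$), I would decompose $g = \tilde{g} + g(a)\cdot 1$ where $\tilde{g}(t) = g(t) - g(a) \in C_a[a,b]$. By linearity the solution to (\ref{eqlinear10}) is $w = w_{\tilde g} + g(a)\, w_1$, where $w_{\tilde g} = R_{\lambda}^{a+(\nu)}\tilde{g} \in \mathfrak{D}_{a+}^{(\nu)}$ by part (i) applied to $\tilde g$, and $g(a)\, w_1 = g(a)\, M_{a,\lambda}^{+(\nu)}\cdot 1 \in \mathfrak{M}_{a,\lambda}^{+(\nu)}$ by the definition (\ref{Ma}) (taking $c = g(a)$, $d = 0$). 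Hence $w \in \mathfrak{D}_{a+}^{(\nu)} \oplus \mathfrak{M}_{a,\lambda}^{+(\nu)}$. I expect the main obstacle to be the measure-theoretic justification in part (ii) that the a.e.-convergence $g_n \to g$ survives composition with the process and integration against $ds$; everything else is bookkeeping with the already-established lemmas. One should also check that the decomposition in (iii) is genuinely a direct sum (the intersection $\mathfrak{D}_{a+}^{(\nu)} \cap \mathfrak{M}_{a,\lambda}^{+(\nu)}$ is trivial in the relevant sense), but this follows because $M_{a,\lambda}^{+(\nu)}\cdot 1$ does not vanish at $a$ while every element of $\mathfrak{D}_{a+}^{(\nu)} \subset C_a[a,b]$ does, so only the constant $c=0$ component lies in the domain.
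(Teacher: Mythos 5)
Your route is essentially the paper's: part (i) via Dynkin's Theorem 1.1 together with Lemma \ref{domains}; part (ii) by approximating $g$ with $g_n\in C_a[a,b]$, passing to the limit by dominated convergence, and invoking Lemma \ref{MLaplace-MJoint} for (\ref{solTheoremRL}); part (iii) by splitting off the constant $g(a)$, which is exactly identity (\ref{repM}) of Lemma \ref{relation}. There is, however, one genuine gap in part (ii): by Definition \ref{D:GRL} a generalized solution must be an element of $C_a[a,b]$, and you never verify that the pointwise limit $w$ is continuous on $(a,b]$ and continuous (and vanishing) at $a$. The paper spends the end of its argument on precisely this: continuity away from $a$ comes from the stochastic continuity of the process, and continuity at $a$ from the regularity in expectation of the boundary point, $\mathbf{E}\left[\tau_a^{t,(\nu)}\right]\to 0$ as $t\downarrow a$ (Lemma \ref{propH0}), which is where (H1) is really used; without this step you have identified the candidate limit but not yet produced a generalized solution in the sense of the definition. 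A second, smaller point: your concern that the a.e.\ convergence $g_n\to g$ must survive composition with the process is legitimate (the paper applies the DCT without comment), but you resolve it by invoking (H2), while the representation (\ref{MsolRL}) and well-posedness are claimed under (H0)--(H1) alone; as written your argument therefore establishes that clause only under the additional hypotheses (admittedly, the same latent issue sits inside the paper's own DCT step).

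In part (iii) your decomposition coincides with the paper's use of (\ref{repM}), but the side remark justifying directness is off on a detail: $M_{a,\lambda}^{+(\nu)}\cdot 1$ \emph{does} vanish at $a$, since $\tau_a^{a,(\nu)}=0$, so "it does not vanish at $a$" cannot be the reason the sum is direct (one would instead argue via the remark after Lemma \ref{relation} that $M_{a,\lambda}^{+(\nu)}\cdot 1$ itself is not in the domain, and that the constant $d$ must vanish). The paper does not verify directness at all, and only membership in the sum is needed for the statement, so this is harmless, but the stated reason should be corrected or dropped.
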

\begin{proof}
 (i)   Take $g \in C_a [a,b]$. Since $g(a)=0$, $w(a) = 0$ and $\lambda > 0$,  Theorem 1.1 in \cite{dynkin1965} implies directly that $w(t) =R_{\lambda}^{a+(\nu)}g(t)$ is the unique solution to (\ref{eqlinear10})  belonging to the domain of the generator.  Moreover,   Lemma \ref{domains} implies 
\begin{equation}\nonumber 
w(t) = M_{a, \lambda}^{+(\nu)} g(t) = \mathbf{E} \left[  \int_0^{\tau_a^{t,(\nu)}} e^{-\lambda s}  g \left(T_t^{+(\nu)} (s)  \right ) ds \right  ].
\end{equation}
\noindent (ii) Let  us now take  any function  $g \in B[a, b]$. As $g$ does not necessarily belong to  $C_a[a,b]$, the resolvent operator no longer provides a solution to (\ref{eqlinear10}). However, using  Definition \ref{D:GRL} we will see that there exists a unique \textit{generalized solution}. To do this, take any sequence  $g_n \in C_a[a,b]$ such that $\lim_{n\to\infty} g_n = g$ a.e. and $\sup_n ||g_n|| < \infty$ uniformly on $n$.   The  procedure consists in finding the generalized  solution as a limit of solutions to the equations
\begin{equation}\nonumber 
-D_{a+}^{(\nu)} w_n(t)  =  \lambda  w_n(t) - g_n(t), \quad t \in (a,b], \quad \quad w_n(a) = 0.
\end{equation}
Since each $g_n\in C_a[a,b]$, the previous case guarantees the existence of a unique solution $w_n  \in \mathfrak{D}_{a+}^{(\nu)}$ given by
\begin{equation}\nonumber
w_n(t) = \mathbf{E} \left[  \int_0^{\tau_a^{t,(\nu)}} e^{-\lambda s}  g_n \left(T_t^{+ (\nu)} (s)  \right ) ds \right  ].  
\end{equation}
Using that   $||g_n||$ is uniformly  bounded, the dominated convergence theorem (DCT) implies 
\begin{equation}\nonumber
\lim_{n\to \infty} w_n(t) = \mathbb{E} \left[   \int_0^{\tau_a^{t,(\nu)}} e^{-\lambda s}  g\left(T_t^{+ (\nu)} (s)  \right ) ds \right  ]  =: w(t). 
\end{equation}
Observe that the continuity of $w (t)$ on $(a, b]$ is a consequence of the continuity of the mapping $t \to \mathbf{E} \left [g\left(  T_t^{+(\nu)} (s)\right)\right ]$. The continuity at   $t=a$ holds by  the regularity in expectation of $\tau_a^{t,(\nu)}$ (a consequence of assumption (H1)).  Therefore,  $w \in C_a[a, b]$ is a generalized solution  to the linear equation (\ref{eqlinear10}). Finally, the representation in  (\ref{solTheoremRL}) follows directly from  Lemma  \ref{MLaplace-MJoint}.

\vspace{0.2cm}
\noindent (iii)   To prove that $ w \in \mathfrak{D}_{a+}^{(\nu)} \oplus \mathfrak{M}_{a,\lambda}^{(\nu)}$ whenever $g\in C[a,b]$, we use the equality (\ref{repM}) in Lemma \ref{relation} to obtain
\[ M_{a,\lambda}^{+(\nu)} g(t) = R_{\lambda}^{a+(\nu)} \hat{g}(t) + g(a)M_{a,\lambda}^{+(\nu)}\cdot 1(t), \]
where $\hat{g}(t) = g(t)- g(a)$, implying  the result.
\end{proof}

\begin{theorem} \textbf{(Case $\lambda = 0$)}\label{wellposedness10}
Theorem \ref{wellposedness1} with $\lambda=0$ is valid for the equation  
\begin{equation}\label{zeroL}
-D_{a+}^{(\nu)} w(t) = -g(t),\quad t \in(a,b]; \quad \quad w(a) = 0.
\end{equation}
\end{theorem}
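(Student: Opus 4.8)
The plan is to verify that every step in the proof of Theorem \ref{wellposedness1} remains valid when $\lambda$ is set to $0$, and to fill in the one place where $\lambda>0$ was used in an essential way, namely the finiteness of the potential operator. For part (i): when $g\in C_a[a,b]$ and $\lambda=0$, the resolvent $R_0^{a+(\nu)}g$ becomes the \emph{potential operator}; by Lemma \ref{propH0}, conditions (H0)-(H1) guarantee that $\mathbf{E}[\tau_a^{t,(\nu)}]<\infty$ uniformly on $[a,b]$, so the Bochner integral defining $R_0^{a+(\nu)}g(t)=\mathbf{E}\!\left[\int_0^{\tau_a^{t,(\nu)}} g(T_t^{+(\nu)}(s))\,ds\right]$ converges absolutely and defines a continuous function vanishing at $a$ (again using regularity in expectation of $\tau_a^{t,(\nu)}$). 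First I would invoke the standard fact (e.g.\ from \cite{dynkin1965}) that for a Feller process killed on exiting a domain whose exit time has finite expectation, the potential operator maps $C_a[a,b]$ into the domain of the generator and $w=R_0^{a+(\nu)}g$ is the unique solution in that domain of $-D_{a+}^{(\nu)}w=-g$ with $w(a)=0$; here one uses $M_{a,0}^{+(\nu)}g$ as defined in (\ref{defM}) with $\lambda=0$, which coincides with $R_0^{a+(\nu)}g$ since $g(a)=0$ by Lemma \ref{relation} (its proof does not use $\lambda>0$).

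For part (ii), the argument is essentially a verbatim repetition with $\lambda=0$. Given $g\in B[a,b]$, pick $g_n\in C_a[a,b]$ with $\sup_n\|g_n\|<\infty$ and $g_n\to g$ a.e.; each problem $(-D_{a+}^{(\nu)},0,g_n,0)$ has the unique in-domain solution $w_n(t)=\mathbf{E}\!\left[\int_0^{\tau_a^{t,(\nu)}}g_n(T_t^{+(\nu)}(s))\,ds\right]$. The dominated convergence theorem applies with dominating function $\|g_n\|\,\mathbf{1}_{[0,\tau_a^{t,(\nu)}]}(s)$, which is integrable precisely because $\mathbf{E}[\tau_a^{t,(\nu)}]<\infty$; this yields $w_n(t)\to w(t):=\mathbf{E}\!\left[\int_0^{\tau_a^{t,(\nu)}}g(T_t^{+(\nu)}(s))\,ds\right]$. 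Continuity of $w$ on $(a,b]$ follows from continuity of $t\mapsto\mathbf{E}[g(T_t^{+(\nu)}(s))]$ and continuity at $a$ from (H1), so $w\in C_a[a,b]$ is the (unique) generalized solution. Under (H2)-(H3), Lemma \ref{MLaplace-MJoint} with $\lambda=0$ gives the explicit form $w(t)=\int_0^{t-a}g(t-r)\int_0^\infty p_s^{+(\nu)}(t,t-r)\,ds\,dr$. For part (iii), when $g\in C[a,b]$ one writes $\hat g=g-g(a)\in C_a[a,b]$ and applies (\ref{repM}) with $\lambda=0$: $M_{a,0}^{+(\nu)}g = R_0^{a+(\nu)}\hat g + g(a)M_{a,0}^{+(\nu)}\cdot 1$, exhibiting $w$ as an element of $\mathfrak{D}_{a+}^{(\nu)}\oplus\mathfrak{M}_{a,0}^{+(\nu)}$.

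The main obstacle — and the only genuine point of departure from the $\lambda>0$ case — is that for $\lambda=0$ the operator $-D_{a+}^{(\nu)}-\lambda$ loses the spectral-gap type coercivity that Dynkin's resolvent theorem exploits, so one must supply the finiteness of the potential operator by hand. This is exactly what Lemma \ref{propH0} provides, via the comparison of $T_t^{+(\nu)}$ with a compound Poisson process having Lévy kernel $\delta\mathbf{1}_{[0,\epsilon]}$: condition (H1) forces the killed process to exit $(a,b]$ in finite expected time, uniformly in the starting point. I would therefore make explicit at the outset that it is (H1) which rescues the $\lambda=0$ case and that, without it, $R_0^{a+(\nu)}$ need not be well-defined. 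Once that observation is in place, every displayed identity in the proof of Theorem \ref{wellposedness1} carries over by deleting the factors $e^{-\lambda s}$ and the terms $\tfrac1\lambda(1-\lambda M_{a,\lambda}^{+(\nu)}\cdot 1)$ (which are absent here since $w_a=0$), so the proof reduces to a short remark rather than a fresh argument.
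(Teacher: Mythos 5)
Your proposal is correct and follows essentially the same route as the paper: the paper likewise reduces the case $\lambda=0$ to the $\lambda>0$ argument by noting that Lemma \ref{propH0} (i.e.\ condition (H1)) makes the potential operator $R_0^{a+(\nu)}$ bounded, invoking the potential-operator version of Dynkin's theorem (Theorem 1.1' in \cite{dynkin1965}) in place of the resolvent theorem, and writing $w = R_0^{a+(\nu)}\tilde{g} + g(a)\,\mathbf{E}\bigl[\int_0^{\tau_a^{t,(\nu)}}ds\bigr]$ for part (iii). You simply spell out the approximation/DCT and explicit-formula steps that the paper leaves as "the same arguments."
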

\begin{proof}
It follows from the same arguments used for  $\lambda >0$,  so that we skip the details. Let us just notice that, since 
\begin{align*} 
|R_{0}^{a+(\nu)} g(t) |& \le \mathbf{E} \left[  \int_0^{\tau_a^{t,(\nu)}} |g(T_t^{a+(\nu)} (s))|ds \right ] \le ||g|| \sup_{t \in(a,b]} \mathbf{E}\left [\tau_a^{t,(\nu)}\right], 
\end{align*}
  Lemma \ref{propH0} implies that the  \textit{potential operator}  corresponding to $T_t^{a+(\nu)}$ is bounded. Thus, 
Theorem 1.1' in \cite{dynkin1965}  ensures that the proof in \ref{wellposedness1} remains true for $\lambda >0$ if one replaces the resolvent operator $R_{\lambda}^{a+(\nu)}$ by the potential operator $R_0^{a+(\nu)}$.   
Also observe that   $w \in \mathfrak{D}_{a+}^{(\nu)} \oplus \mathfrak{M}_{a,0}^{+(\nu)}$ whenever  $g \in C[a,b]$ as $w$ can be written as
\[w (t) = R_0^{a+(\nu)}\tilde{g}(t) + g(a) \mathbf{E}\left[\int_0^{\tau_a^{t,(\nu)}}ds \right], \]
where $\tilde{g}(t) := g(t) -g(a)$, for all $t \in [a,b]$. 
\end{proof}

\subsection{Linear equations involving $D_{*}$-operators}
Let $a \in \mathbb{R}$ and  $\lambda \ge 0$. Consider the problem of finding a function  $u\in C[a,b]$ solving
\begin{equation}\label{eqlinear1C}
-D_{a+*}^{(\nu)} u(t)  =  \lambda  u(t) - g (t), \quad t \in (a,b],\quad u(a)=u_a,
\end{equation}
for a given function $g$ on $[a, b]$.

If $u$ belongs to the domain of the generator $-D_{a+*}^{(\nu)}$, then the linear equation (\ref{eqlinear1C}) can be written  in terms of the  RL type operator $D_{a+}^{(\nu)}$ as follows.  Define $w(t) := u(t) - u_a$ for all $t \in[a,b]$, then   $-D_{a+*}^{(\nu)} w(t) = -D_{a+*}^{(\nu)} u(t)$ as $-D_{a+*}^{(\nu)} u_a = 0$.  Setting $\tilde{g}(t) := g(t) - \lambda u_a$,  it follows that 
\begin{equation}\label{2zero2v2}
-D_{a+}^{(\nu)} w(t)  =  \lambda w(t) - \tilde{g}(t),   \quad t\in (a,b] \quad w(a) = 0.
\end{equation}
Hence, $u(t) = w(t) +u_a$ is a solution to the original problem if, and only if, $w$ solves (\ref{2zero2v2}). The previous leads to the following definition.
\begin{definition}\label{Def-GenC}
Let $g \in B[a,b]$ and $\lambda \ge 0$.  A function $u$ in $C[a,b]$   is said to solve the  Caputo type  problem $(-D_{a+*}^{(\nu)}, \lambda,g, u_a)$ as 
\begin{itemize}
\item [(i)] a \emph{solution in the domain of the generator}  if $w$ satisfies (\ref{eqlinear1C}) and belongs to the domain of the generator $-D_{a+*}^{(\nu)}$; 
\item [(ii)] a \textit{generalized solution}   if $u(t) = u_a + w(t)$ for all $t \in [a,b]$, where $w$ is the (possibly generalized)  solution  to the RL type problem $(-D_{a+}^{(\nu)},\lambda, g-\lambda u_a, 0)$.
\end{itemize}
\end{definition}
\begin{remark}
Definition \ref{Def-GenC}, $(ii)$, is given in terms of the RL type solution, but it can also be written in terms of the approximation of solutions belonging to the domain of the corresponding generator. 
\end{remark}
\begin{definition}
The  Caputo type equation (\ref{eqlinear1C}) is \textit{well-posed in the generalized sense}  if it has a unique generalized solution and it depends continuously on the initial condition.
\end{definition}

\noindent \textbf{Well-posedness result for the Caputo type linear equation.}

\begin{theorem} \textbf{(Case $\lambda > 0$)}\label{wellposedness2}
Let $\nu$ be a function satisfying conditions (H0)-(H1) and suppose $\lambda > 0$.  
\begin{itemize}
\item [(i)] If  $g \in C[a,b]$ and $g(a)=\lambda u_a$, then the linear equation $(-D_{a+*}^{(\nu)},\lambda, g, u_a)$ has a unique solution in the domain of the generator    given by $u = R_{\lambda}^{a+*(\nu)} g$ (the resolvent operator at $\lambda$).
\item[(ii)] For any $g \in B[a,b]$ and  $u_a\in \mathbb{R}$, the   linear equation  $(-D_{a+*}^{(\nu)},\lambda, g, u_a)$  is well-posed in the generalized sense and   the solution  admits the stochastic representation
\begin{equation}\label{SRCaputo}
u(t) = u_a  \mathbf{E} \left[ e^{-\lambda \tau_a^{t,(\nu)}}\right]  + \mathbf{E} \left[Ê \int_0^{\tau_a^{t,(\nu)}}  e^{-\lambda s} g \left( T_t^{+(\nu)} (s) ds \right )\right ].
\end{equation}

\noindent Moreover, if additionally $\nu$ satisfies conditions (H2)-(H3),  then 
\begin{equation}\label{solTheoremC}
u(t) =  u_a \int_0^{\infty} e^{-\lambda s} \mu_a^{t,(\nu)} (s)ds + \int_0^{t-a} g(t-r)  \int_0^{\infty} e^{-\lambda s} p_s^{+(\nu)}(t,t-r)  \,ds\,  \, dr,
\end{equation}
where $\mu_a^{t,(\nu)}(s)$ denotes the density  function of the r.v. $\tau_a^{t,(\nu)}$.
\item[(iii)] If $g\in C[a,b]$, then the solution to (\ref{eqlinear1C}) belongs to $ \mathfrak{D}_{a+*}^{(\nu)} \oplus \mathfrak{M}_{a,\lambda}^{(\nu)}$ (the direct sum of the domain of the generator $-D_{a+*}^{(\nu)}$ and the space defined in (\ref{Ma})).  
 \end{itemize}
 \end{theorem}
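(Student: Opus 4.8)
The strategy is to deduce everything from the already-established RL-type result, Theorem \ref{wellposedness1}, via the substitution recorded just before Definition \ref{Def-GenC}, and then to translate the conclusions back through that substitution.

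For (i), take $g\in C[a,b]$. By the discussion around equation (\ref{udomain}) (Theorem 1.1 in \cite{dynkin1965}), the function $u:=R_{\lambda}^{a+*(\nu)}g$ is the unique element of $\mathfrak{D}_{a+*}^{(\nu)}$ satisfying $-D_{a+*}^{(\nu)}u=\lambda u-g$, so only the boundary value has to be checked. Evaluating the representation of $\mathfrak{D}_{a+*}^{(\nu)}$ from Lemma \ref{domains} at $t=a$ and using $\tau_a^{a,(\nu)}=0$ (hence $M_{a,\lambda}^{+(\nu)}\cdot 1(a)=0$ and $M_{a,\lambda}^{+(\nu)}g(a)=0$) gives $u(a)=g(a)/\lambda$; thus $u(a)=u_a$ holds exactly when $g(a)=\lambda u_a$, which is the stated compatibility condition. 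Uniqueness inside the domain is again Theorem 1.1 in \cite{dynkin1965}.

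For (ii), given $g\in B[a,b]$ and $u_a\in\mathbb{R}$, set $w:=u-u_a$ and $\tilde g:=g-\lambda u_a\in B[a,b]$; by the reduction, $u$ is a generalized solution of $(-D_{a+*}^{(\nu)},\lambda,g,u_a)$ if and only if $w$ is a generalized solution of the RL problem $(-D_{a+}^{(\nu)},\lambda,\tilde g,0)$. Theorem \ref{wellposedness1}(ii) makes the latter well-posed with
\[
w(t)=\mathbf{E}\!\left[\int_0^{\tau_a^{t,(\nu)}}e^{-\lambda s}\,\tilde g\bigl(T_t^{+(\nu)}(s)\bigr)\,ds\right].
\]
Splitting $\tilde g=g-\lambda u_a$ and evaluating the constant part with (\ref{M1}), the $-\lambda u_a$ term equals $-u_a\bigl(1-\mathbf{E}[e^{-\lambda\tau_a^{t,(\nu)}}]\bigr)$, so $u=w+u_a$ is precisely (\ref{SRCaputo}). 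Under (H2)--(H3), (\ref{solTheoremC}) then follows from Lemma \ref{MLaplace-MJoint}: equation (\ref{MLaplace}) together with (\ref{lawofTau}) identifies $\mathbf{E}[e^{-\lambda\tau_a^{t,(\nu)}}]$ with $\int_0^\infty e^{-\lambda s}\mu_a^{t,(\nu)}(s)\,ds$, and (\ref{MJoint}) rewrites the second expectation as the stated double integral. Uniqueness of the generalized solution is inherited from the RL case (remark after Definition \ref{D:GRL}), and continuous dependence on the initial condition is immediate: two solutions with the same $g$ differ by $(u_a^{(1)}-u_a^{(2)})\mathbf{E}[e^{-\lambda\tau_a^{t,(\nu)}}]$, whose sup-norm is at most $|u_a^{(1)}-u_a^{(2)}|$.

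For (iii), with $g\in C[a,b]$ put $\hat g:=g-g(a)\in C_a[a,b]$. Combining $\mathbf{E}[e^{-\lambda\tau_a^{t,(\nu)}}]=1-\lambda M_{a,\lambda}^{+(\nu)}\cdot 1(t)$ with the identity $M_{a,\lambda}^{+(\nu)}g=R_{\lambda}^{a+*(\nu)}\hat g+g(a)M_{a,\lambda}^{+(\nu)}\cdot 1$ from Lemma \ref{relation}, the representation (\ref{SRCaputo}) rearranges to
\[
u(t)=R_{\lambda}^{a+*(\nu)}\hat g(t)+\Bigl(u_a+(g(a)-\lambda u_a)\,M_{a,\lambda}^{+(\nu)}\cdot 1(t)\Bigr),
\]
where the first summand lies in $\mathfrak{D}_{a+*}^{(\nu)}$ and the bracketed term has the form $c\,M_{a,\lambda}^{+(\nu)}\cdot 1(t)+d$, i.e. lies in $\mathfrak{M}_{a,\lambda}^{+(\nu)}$; this is the claimed direct-sum decomposition. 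Essentially all of the argument is bookkeeping built on Theorem \ref{wellposedness1}; the only mild subtlety is checking that the two equivalent descriptions of a generalized solution (the $g_n$-approximation of Definition \ref{D:GRL} versus the RL-reduction of Definition \ref{Def-GenC}) agree, which follows by applying dominated convergence to the representation above along any admissible sequence $g_n\to g$, and I do not foresee a genuine obstacle.
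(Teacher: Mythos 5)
Your proposal is correct and follows essentially the same route as the paper: part (i) via Dynkin's theorem plus the boundary evaluation $u(a)=g(a)/\lambda$, part (ii) via the reduction $w=u-u_a$, $\tilde g=g-\lambda u_a$ to Theorem \ref{wellposedness1} together with (\ref{M1}) and Lemma \ref{MLaplace-MJoint}, and part (iii) via the decomposition $u=R_{\lambda}^{a+*(\nu)}[g-g(a)]+[g(a)-\lambda u_a]M_{a,\lambda}^{+(\nu)}\cdot 1+u_a$ from Lemma \ref{relation}. The extra details you supply (explicit continuity in $u_a$, consistency of the two notions of generalized solution) are consistent with the paper's remarks and introduce no gap.
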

\begin{proof}
(i)  Since $-D_{a+*}^{(\nu)}$ is the generator of a Feller process on $C[a,b]$, for any $g \in C[a,b]$ the function $u(t) := R_{\lambda}^{a+*(\nu)}g(t)$ is the unique solution to (\ref{udomain}) which belongs to the domain of the generator. This implies that  $u(a) = R_{\lambda}^{a+*(\nu)}g(a) = g(a)/\lambda$.  Hence,  condition $g(a) = \lambda u_a$  ensures that $u$ satisfies the boundary condition in (\ref{eqlinear1C}), as required.

\vspace{0.2cm}
\noindent (ii) By Definition \ref{Def-GenC}, $u$ is the generalized solution if $u(t) =  w(t) + u_a$, where $w$ is the solution to the RL type problem  $(-D_{a+}^{(\nu)},\lambda, g(t) - \lambda u_a, 0)$ whose well-posedness is guaranteed  by  Theorem \ref{wellposedness1}.

Moreover, the  equality  (\ref{MsolRL}) and the linearity of $M_{a, \lambda}^{+(\nu)}$  yield
\begin{align*}
w(t) &=   M_{a, \lambda}^{+(\nu)} g(t) -\lambda u_a M_{a, \lambda}^{+(\nu)} \cdot 1(t) = M_{a, \lambda}^{+(\nu)} g(t) - u_a  \left ( 1 - \mathbf{E} \left[ e^{-\lambda \tau_a^{t, (\nu)}}\right ]\right ), 
\end{align*}
where the last equality holds due to equation  (\ref{M1}).  Thus, (\ref{SRCaputo}) is obtained by plugging the previous expression for $w(t)$ into $u(t) = w(t) + u_a$.  This  representation  implies directly  the continuity on the initial condition $u_a$, as required for the well-posedness. Finally, the explicit solution  (\ref{solTheoremC}) follows from  Lemma \ref{MLaplace-MJoint}.

\vspace{0.2cm}
\noindent (iii) Assume now that $g\in C[a,b]$.   Since $u(t) =  M_{a, \lambda}^{+(\nu)} g(t) -\lambda u_a M_{a, \lambda}^{(\nu)} \cdot 1(t) + u_a$, by linearity one can rewrite it as
\begin{align*}
u(t) &= M_{a, \lambda}^{+(\nu)} [g -g(a) + g(a)](t)   - \lambda u_a M_{a, \lambda}^{+(\nu)} \cdot 1(t) + u_a \\
&=  R_{\lambda}^{a+*(\nu)} [g -g(a)](t)  + [g(a) - \lambda u_a] M_{a, \lambda}^{+(\nu)}\cdot 1(t) + u_a.
\end{align*}
We then conclude that $u \in \mathfrak{D}_{a+*}^{(\nu)} \oplus \mathfrak{M}_{a,\lambda}^{+(\nu)}$ as  \[R_{\lambda}^{a+*(\nu)} [g -g(a)] \,\,\in\,\,  \mathfrak{D}_{a+*}^{(\nu)},\] and \[ [g(a) + \lambda u_a] M_{a, \lambda}^{+(\nu)}\cdot 1(t) + u_a\,\,\in\,\, \mathfrak{M}_{a,\lambda}^{+(\nu)}. \]
\end{proof}

\begin{theorem}(\textbf{Case $\lambda = 0$})\label{wellposedness10C}
All assertions in Theorem \ref{wellposedness2}  with $\lambda=0$ hold for the equation
\begin{equation}\label{zeroC}
-D_{a+*}^{(\nu)} u(t) = - g(t),\quad t \in (a,b]; \quad \quad u(a) = u_a.
\end{equation}
\end{theorem}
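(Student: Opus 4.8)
The plan is to mimic the proof of Theorem \ref{wellposedness2}, replacing everywhere the resolvent operator $R_{\lambda}^{a+*(\nu)}$ by the potential operator $R_0^{a+*(\nu)}$ and observing that all the probabilistic identities used there specialize to $\lambda = 0$ without change. The first step is to justify that the potential operator $R_0^{a+*(\nu)}$ of the stopped process is well-defined and bounded on $B[a,b]$; this follows exactly as in the proof of Theorem \ref{wellposedness10}, since
\[
|R_0^{a+*(\nu)} g(t)| \le \mathbf{E}\left[\int_0^{\tau_a^{t,(\nu)}} |g(T_t^{a+*(\nu)}(s))|\,ds\right] \le \|g\|\,\sup_{t\in(a,b]}\mathbf{E}\left[\tau_a^{t,(\nu)}\right],
\]
and the right-hand side is finite by Lemma \ref{propH0}. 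With $R_0^{a+*(\nu)}$ bounded, Theorem 1.1' in \cite{dynkin1965} applies in place of Theorem 1.1, giving the analogue of part (i): for $g\in C[a,b]$ with $g(a)=0$ (the condition $g(a)=\lambda u_a$ degenerating to $g(a)=0$ when $\lambda=0$), the unique solution in the domain of $-D_{a+*}^{(\nu)}$ to (\ref{zeroC}) with that compatible boundary value is $u = R_0^{a+*(\nu)} g$.

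For part (ii), I would invoke Definition \ref{Def-GenC} with $\lambda = 0$: the generalized solution of the Caputo type problem $(-D_{a+*}^{(\nu)}, 0, g, u_a)$ is $u(t) = u_a + w(t)$, where $w$ is the (possibly generalized) solution of the RL type problem $(-D_{a+}^{(\nu)}, 0, g, 0)$, noting that the shift $g - \lambda u_a$ reduces to $g$ itself. Theorem \ref{wellposedness10} already establishes the well-posedness in the generalized sense of that RL problem and the representation $w(t) = \mathbf{E}[\int_0^{\tau_a^{t,(\nu)}} g(T_t^{+(\nu)}(s))\,ds]$, and under (H2)-(H3) the explicit form $w(t) = \int_0^{t-a} g(t-r)\int_0^\infty p_s^{+(\nu)}(t,t-r)\,ds\,dr$. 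Adding $u_a$ and using that $\mathbf{E}[e^{-0\cdot\tau_a^{t,(\nu)}}] = 1$ and $\int_0^\infty \mu_a^{t,(\nu)}(s)\,ds = 1$ (since $\tau_a^{t,(\nu)}$ is a.s. finite by Lemma \ref{propH0}), one recovers the $\lambda = 0$ specializations of (\ref{SRCaputo}) and (\ref{solTheoremC}). Continuity in the initial condition $u_a$ is immediate since $u$ depends affinely on $u_a$ with coefficient $1$.

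For part (iii) the argument is again a verbatim transcription: write $u(t) = M_{a,0}^{+(\nu)} g(t) + u_a$ and split $g = (g - g(a)) + g(a)$ to get
\[
u(t) = R_0^{a+*(\nu)}[g - g(a)](t) + g(a)\,M_{a,0}^{+(\nu)}\!\cdot 1(t) + u_a,
\]
where $M_{a,0}^{+(\nu)}\!\cdot 1(t) = \mathbf{E}[\int_0^{\tau_a^{t,(\nu)}} ds] = \mathbf{E}[\tau_a^{t,(\nu)}]$ by (\ref{defM2}) with $\lambda = 0$, so that $g(a)\,M_{a,0}^{+(\nu)}\!\cdot 1 + u_a \in \mathfrak{M}_{a,0}^{+(\nu)}$ and $R_0^{a+*(\nu)}[g - g(a)] \in \mathfrak{D}_{a+*}^{(\nu)}$, giving $u \in \mathfrak{D}_{a+*}^{(\nu)} \oplus \mathfrak{M}_{a,0}^{+(\nu)}$. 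The only genuine point requiring care—hence the main obstacle—is confirming that Theorem 1.1' of \cite{dynkin1965} (the $\lambda = 0$ version, valid precisely when the potential operator is bounded) furnishes uniqueness of the solution in the domain; everything else is a routine substitution $\lambda \rightsquigarrow 0$ in identities already proved, combined with the finiteness of $\mathbf{E}[\tau_a^{t,(\nu)}]$ from Lemma \ref{propH0} and the regularity of $a$ in expectation, which delivers continuity of $u$ up to and including the endpoint $t = a$.
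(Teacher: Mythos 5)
Your parts (ii) and (iii) follow essentially the same route as the paper: reduce the Caputo problem with $\lambda=0$ to the RL problem $(-D_{a+}^{(\nu)},0,g,0)$ via $w=u-u_a$ (the shift $g-\lambda u_a$ becoming $g$), invoke Theorem \ref{wellposedness10}, and add $u_a$ back; that is exactly what the paper does. The problem is in your first step. The potential operator $R_0^{a+*(\nu)}$ of the \emph{stopped} process is not ``well-defined and bounded on $B[a,b]$'': since $T_t^{a+*(\nu)}$ sits at $a$ forever after $\tau_a^{t,(\nu)}$, one has
\[
R_0^{a+*(\nu)}g(t)=\mathbf{E}\left[\int_0^{\tau_a^{t,(\nu)}} g\bigl(T_t^{+(\nu)}(s)\bigr)\,ds\right]+g(a)\,\mathbf{E}\left[\int_{\tau_a^{t,(\nu)}}^{\infty}ds\right],
\]
which is infinite whenever $g(a)\neq 0$ (e.g.\ for $g\equiv 1$). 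Your displayed bound truncating the integral at $\tau_a^{t,(\nu)}$ silently assumes $g(a)=0$, so the hypothesis needed to invoke Dynkin's Theorem 1.1' for the stopped semigroup on $C[a,b]$ simply fails: the potential operator is unbounded on that space, and Theorem 1.1' cannot be applied to $-D_{a+*}^{(\nu)}$ directly.

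The fix is the paper's reduction: pass to the \emph{killed} process, whose potential operator $R_0^{a+(\nu)}$ is genuinely bounded on $C_a[a,b]$ by Lemma \ref{propH0}, so Dynkin's Theorem 1.1' applies there (this is the content of Theorem \ref{wellposedness10}); then, for $g$ with $g(a)=0$, use that $R_0^{a+*(\nu)}g$ coincides with $R_0^{a+(\nu)}g$ and carry the solution back via $u=u_a+w$. With this repair your part (i) conclusion survives (under the degenerate compatibility condition $g(a)=0$), and the rest of your argument, including the decomposition $u=R_0^{a+(\nu)}[g-g(a)]+g(a)M_{a,0}^{+(\nu)}\cdot 1+u_a$ showing $u\in\mathfrak{D}_{a+*}^{(\nu)}\oplus\mathfrak{M}_{a,0}^{+(\nu)}$, goes through as in the paper.
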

\begin{proof}
Since the problem (\ref{zeroC}) 
 rewrites  as
\begin{equation}\label{zeroRLC}
-D_{a+}^{(\nu)} w(t) = \tilde{g}(t),\quad t \in (a,b]; \quad \quad w(a) = 0,
\end{equation}
with $w(t) = u(t) - u_a$ and $\tilde{g}(t) = g(t)$,  Theorem \ref{wellposedness10}   gives the potential operator $R_0^{a+(\nu)} \tilde{g}(t)$ as the solution to (\ref{zeroRLC}) for any $\tilde{g} \in C_a[a,b]$. Hence,  the unique generalized solution to (\ref{zeroC}) is given by $u(t) = u_a + \lim_{n\to \infty} R_0^{a+(\nu)}\tilde{g}_n(t)$ for any sequence  $\tilde{g}_n$  as in Definition \ref{D:GRL}.  Consequently, the same arguments used for $\lambda >0$ remain valid.
\end{proof}

\subsection{Fractional linear equations  involving  Caputo derivatives}

Since  Caputo derivatives are particular cases of the $D_{*}-$operators, the solution to fractional linear equations with the Caputo derivative  $D_{a+*}^{\beta}$, for $\beta \in (0,1)$, is obtained by a direct application of the previous results.  Namely,   Theorem \ref{wellposedness2} implies that the problem \begin{equation}
\nonumber 
  D_{a+*}^{\beta} u(t) = - \lambda u(t)+ g(t), \quad t\in(a,b],\quad u (a) = u_a \in \mathbb{R},
\end{equation}
 for a given $g \in B[a,b]$ and  $\lambda > 0$, has a unique generalized solution given by  
\begin{equation}\label{solutionC1B}
u (t)  = u_a \mathbf{E}\left  [ e^{-\lambda \tau_a^{t,\beta}} \right ] + \mathbf{E} \left [  \int_0^{\tau_a^{t,\beta}} e^{-\lambda s} g(T_{t}^{+\beta}(s))ds \right ], 
\end{equation}
where  $T_{t}^{+\beta}$ is  the inverted $\beta-$stable subordinator started at $t$.   
Moreover,  substituting (\ref{transitionpXInv}) into formula (\ref{solTheoremC}) yields 
\begin{align}
u(t)  =\, u_a \, & \frac{1}{\beta} (t-a) \int_0^{\infty} e^{\,-\,\lambda \, s}  \left (\,\,s^{-\frac{1}{\beta} - 1} w_{\beta} \left( (t-a) s^{-1/\beta};1,1 \right ) \right ) ds \,+  \nonumber \\  &+ \int_0^{t-a} g(t-r)   \left ( r^{\beta-1} \int_0^{\infty}  \exp\left \{-\lambda s r^{\beta}  \right \} s^{-1/\beta}   w_{\beta} (s^{-1/\beta} ;1,1)  ds\right )dr. \label{linearsolCB}
\end{align}
Further, if $g(a) = \lambda u_a$ and $g \in C[a,b]$, then $u$ belongs to the domain of the generator $-D_{a+*}^{(\nu)}$. 

\begin{corollary}\label{LaplaceTau}
Let $t \in (a,b]$ and  $\lambda > 0$. Then the Laplace transform of the first exit time from $(a, b]$ for the inverted $\beta-$stable subordinator started at $t$ is given by
\begin{align}\nonumber 
\textbf{E}[e^{-\lambda \tau_a^{t,\beta}}] = E_{\beta} (\,-\,\lambda (t-a)^{\beta}),
\end{align}
with $E_{\beta}$ denoting the Mittag-Leffler function  (see Preliminaries). Hence, 
\begin{equation}\nonumber 
E_{\beta} (\, - \,\lambda (t-a)^{\beta}) =  \frac{1}{\beta} (t-a) \int_0^{\infty} \exp  (\,-\,\lambda \, s) \,  s^{-\frac{1}{\beta} - 1} w_{\beta} \left( (t-a) s^{-1/\beta} ;1,1\right )   ds.
\end{equation}
\end{corollary}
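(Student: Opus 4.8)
The plan is to identify Corollary \ref{LaplaceTau} as an instance of the general stochastic representation already proved, specialized to the classical Caputo derivative. First I would recall that for $\nu(t,r) = -1/(\Gamma(-\beta)r^{1+\beta})$ the operator $-D_{a+*}^{(\nu)}$ coincides with $-D_{a+*}^{\beta}$, and the underlying decreasing Feller process $T_t^{+(\nu)}$ is exactly the inverted $\beta$-stable subordinator $T_t^{+\beta}$ started at $t$; hence $\tau_a^{t,(\nu)} = \tau_a^{t,\beta}$. The key observation is to apply the well-posedness result (Theorem \ref{wellposedness2}) with the choices $g \equiv 0$ and $u_a = 1$. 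On the one hand, the equation $-D_{a+*}^{\beta} u(t) = \lambda u(t)$ with $u(a) = 1$ has unique generalized solution $u(t) = \mathbf{E}[e^{-\lambda\tau_a^{t,\beta}}]$ by the stochastic representation (\ref{SRCaputo}) (with the $g$-term vanishing). On the other hand, the classical Laplace transform result (\ref{linearcase})--(\ref{General}) for the Caputo equation, applied with $g \equiv 0$ and $u_a = 1$, gives $u(t) = E_{\beta}(-\lambda(t-a)^{\beta})$ as the unique solution. Since both are solutions (the latter being a generalized solution by the discussion in the Introduction, or directly: the Mittag-Leffler function satisfies the equation in the classical sense and is the limit of domain solutions), uniqueness of the generalized solution forces $\mathbf{E}[e^{-\lambda\tau_a^{t,\beta}}] = E_{\beta}(-\lambda(t-a)^{\beta})$.

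For the second displayed identity, I would simply combine the equality just obtained with the explicit integral form of the solution. Specializing formula (\ref{solTheoremC}) (equivalently (\ref{linearsolCB})) to $g \equiv 0$, $u_a = 1$ gives
\[
\mathbf{E}[e^{-\lambda\tau_a^{t,\beta}}] = \frac{1}{\beta}(t-a)\int_0^\infty e^{-\lambda s}\, s^{-1/\beta - 1} w_{\beta}\big((t-a)s^{-1/\beta};1,1\big)\,ds,
\]
which uses (\ref{transitionpXInv}) — the transition density of the inverted $\beta$-stable subordinator — plugged into the density $\mu_a^{t,(\nu)}$ from Proposition \ref{muTau}. Equating the two expressions for $\mathbf{E}[e^{-\lambda\tau_a^{t,\beta}}]$ yields the stated integral representation of the Mittag-Leffler function. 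This is precisely the "pure probabilistic proof of the well-known equality in (\ref{identity})" advertised in the Introduction.

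The main obstacle — really the only substantive point — is justifying that $E_{\beta}(-\lambda(t-a)^{\beta})$ qualifies as \emph{the} generalized solution in the sense of Definition \ref{Def-GenC}, so that uniqueness applies. One must check either (a) that $t \mapsto E_{\beta}(-\lambda(t-a)^{\beta})$ arises as the limit of domain solutions $R_{\lambda}^{a+*(\nu)}g_n$ for a suitable approximating sequence, or (b) invoke the classical fact (cited via \cite{kai}, \cite{podlubny}-\cite{samko}) that (\ref{General}) is the unique solution to the Caputo equation in the classical function class, and then reconcile the two notions of solution — but since the generalized solution constructed in Theorem \ref{wellposedness2} is itself continuous on $[a,b]$ and satisfies the equation, and the classical theory gives uniqueness in an overlapping class, the two must coincide. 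A clean way to sidestep this is to note that both $\mathbf{E}[e^{-\lambda\tau_a^{t,\beta}}]$ and $E_{\beta}(-\lambda(t-a)^{\beta})$ have the same Laplace transform in $t$ (or satisfy the same equation with the same boundary data), invoking the classical Laplace-transform uniqueness for (\ref{linearcase}); I would state this explicitly rather than re-deriving it. Everything else — the substitution of the stable transition density, the change of variables $s \mapsto s r^{\beta}$ in the integral, the continuity at $t=a$ — is routine given the earlier lemmas.
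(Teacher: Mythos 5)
Your proposal follows exactly the paper's own argument: the paper proves the corollary by taking $g\equiv 0$ and $u_a=1$ in both the classical solution (\ref{General}) and the probabilistic representation (\ref{linearsolCB}) and invoking uniqueness of solutions, which is precisely your route (your additional discussion of why the Mittag-Leffler expression qualifies as the generalized solution just makes explicit what the paper leaves implicit in "uniqueness of solutions"). No gap; this matches the paper's proof.
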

\begin{proof}
Follows as  a consequence of formulas (\ref{General}) and  (\ref{linearsolCB}) by  taking   $g\equiv 0$ and $u_a=1$ and by uniqueness of solutions.
\end{proof}
 \begin{remark}
 Alternatively, Corollary \ref{LaplaceTau} can also be obtained by using the identity  (see \cite{zolotarev}, Theorem 2.10.2)
 \begin{equation}\label{identity}
\beta E_{\beta} (-u) = \int_0^{\infty} \exp (-uy) y^{-1-1/\beta} w_{\beta} (y^{-1/\beta};1,1)dy.
 \end{equation}
 Moreover, using this identity we can see that  (\ref{linearsolCB}) coincides with the well known solution given in (\ref{General}). 
 \end{remark}
 
 \section{Mixed linear equations}

In this section we study   linear  equations involving both the RL type and the Caputo type operators. The general setting will be  explained first in $\mathbb{R}^d$, and then we shall restrict ourselves   to the simplest 2-dimensional case. This is done to avoid cumbersome calculations which nevertheless     can be extended straightforward from the simple case analyzed here.

Let  $\textbf{a}=(a_1,\ldots, a_d), \textbf{b}=(b_1,\ldots, b_d)  \in \mathbb{R}^d$, such that $\textbf{a} < \textbf{b}$. The space $\mathbb{R}^d$ is assumed to be equipped with its natural partial order, the Pareto order, i.e. $\textbf{a} < \textbf{b}$ means $a_i < b_i$ for all $i =1, \ldots, d$. 
Notation $[\textbf{a},\textbf{b}]$ denotes the cartesian product $[a_1, b_1] \times \cdots \times [a_d,b_d]$ and $\textbf{t} \in [\textbf{a}, \textbf{b}]$ means $t_i \in [a_i,b_1]$ for all $i =1, \ldots, d$. Let us denote by $B[\textbf{a},\textbf{b}]$ and $C[\textbf{a},\textbf{b}]$ the space of bounded Borel measurable functions and continuous functions on  $[\textbf{a},\textbf{b}]$, respectively,  and by $C^1[\textbf{a},\textbf{b}]$ the space of continuous functions on $[\textbf{a},\textbf{b}]$ with continuous  first order partial derivatives on $[\textbf{a},\textbf{b}]$. Similar notation will be used  for  $(\textbf{a},\textbf{b}]$ and $(-\infty, \textbf{b}]$.

 Notation $\textbf{r}^{a_i}$  means a  vector $\textbf{r}=(r_1, \ldots, r_d) \in [\textbf{a}, \textbf{b}]$  having $r_i=a_i$ as its   $i$th-coordinate.  Since all the processes considered here have  decreasing sample paths, we will  be only interested in the boundary of $(\textbf{a}, \textbf{b}]$ given by $\textbf{r}^{a_i}$ for all $i=1,\ldots, d$. This subset  will be denoted by  
  \begin{equation}\nonumber 
 \partial_{\textbf{a}}  (\textbf{a}, \textbf{b}]:=   \bigcup_{i=1}^d \big \{ \mathbf{r} \in [\textbf{a}, \textbf{b}]\,:\, \mathbf{r} = \mathbf{r}^{a_i}\,  \big \},    
 \end{equation}
 and  the space of continuous functions on $[\textbf{a},\textbf{b}]$ vanishing at the boundary $\partial_{\textbf{a}}(\textbf{a},\textbf{b}]$ will be denoted by  $C_{\textbf{a}}[\textbf{a},\textbf{b}]$.
 
  For  $i=1, \ldots, d$, let $\nu_i$ be a function satisfying conditions (H0)-(H1) and let $\textbf{t} \in (\textbf{a}, \textbf{b}]$. 
 The operator $- _{t_i}\!D_{a_i+}^{(\nu_i)}$ represents the $D$-operator  defined by $\nu_i$ acting  on the  variable $t_i$.  
For notational convenience set $\nu = (\nu_1, \ldots, \nu_d)$ and  define the \textit{mixed $D-$operator}  associated with the vector $\nu$ as
\begin{equation}\label{mixedL}
-\,\textbf{D}_{\textbf{a}+}^{(\nu)}:= -\, \sum_{i=1}^d  ~_{t_i}\!D_{a_i+}^{(\nu_i)},\quad \end{equation}
Hence, the operator $-\,\textbf{D}_{\textbf{a}+}^{(\nu)}$ is a sum of RL type operators each one acting  on a  different variable. Analogously, we define the  mixed operators $-\,\textbf{D}^{+(\nu)}$ and $-\,\textbf{D}_{\textbf{a}+*}^{(\nu)}$ by using  $D^{+(\nu_i)}$ and $~_{t_i}\!D_{a_i+*}^{(\nu_i)}$, respectively.

 Consider the RL type linear equation 
\begin{equation}\label{mixedP}
\begin{array}{rcll}
 -\,\textbf{D}_{\textbf{a}+}^{(\nu)}  w(\textbf{t}) &=& \lambda w(\textbf{t}) - g(\textbf{t}),& \textbf{t} \in (\textbf{a}, \textbf{b}],  \\
w (\textbf{t}) &=& 0, & \textbf{t}  \in \partial_\textbf{a} (\textbf{a}, \textbf{b}],  
\end{array}
\end{equation}
  for a given function  $g \in B[\textbf{a}, \textbf{b}]$ and $\lambda \ge 0$.
  
The operator $- \textbf{D}_{\textbf{a}+}^{(\nu)}$ can be thought of  as the generator of a Feller  process on $(\textbf{a}, \textbf{b}]$ obtained by killing the process generated by $-\,\textbf{D}^{+(\nu)}$ on an attempt to cross   the boundary  $\partial_{\textbf{a}} (\textbf{a}, \textbf{b}]$. The killed process (started at $\textbf{t}$)  will be denoted by   $ \textbf{T}^{\textbf{a}+(\nu)}_{\textbf{t}} = \left \{ \textbf{T}^{\textbf{a}+(\nu)}_{\textbf{t}} (s): s \ge 0 \right \}$.

Due to the independence,  the Lie-Trotter theorem \cite{EK, KV0} implies that  \[ \textbf{T}^{\textbf{a}+(\nu)}_{\textbf{t}} (s) = \left (T^{a_1+(\nu_1)}_{t_1} (s), \ldots, T^{a_d+(\nu_d)}_{t_d} (s)\right ),\] wherein each coordinate $T^{a_i+(\nu_i)}_{t_i}$ is an independent  process generated by $-_{t_i}\!D_{a_i+}^{(\nu_i)}$.  
 Let $\textbf{S}_s^{\textbf{a}+(\nu)} $  denote  the  semigroup of the process $\textbf{T}_{\textbf{t}}^{\textbf{a}+(\nu)}$ and let  $\hat{\mathfrak{D}}_{\textbf{a}+}^{(\nu)}$ be the domain of its  generator.   Then, $u \in \hat{\mathfrak{D}}_{\textbf{a}+}^{(\nu)}$ if, and only if, the limit 
\[ -\,\textbf{D}_{\textbf{a}+}^{(\nu)} u(\textbf{t}) = \lim_{s \to 0} \frac{\textbf{S}_s^{\textbf{a}+(\nu)} u(\textbf{t}) - u(\textbf{t})}{s}, \]
exists in the norm of $C_{\textbf{a}}[\textbf{a},\textbf{b}]$.

To solve (\ref{mixedP}),  let us introduce some definitions which will extend the ones used in the one-dimensional case.
\begin{definition}\label{soldefMix}
Let $g \in B[\textbf{a}, \textbf{b}]$,  and $\lambda \ge  0$. A function $w \in C_{\textbf{a}}[\textbf{a}, \textbf{b}]$   is said to solve the RL type problem  $(-\,\textbf{D}_{\textbf{a}+}^{(\nu)}, \lambda,g, 0)$ as 
\begin{itemize}
 \item [(i)]  a \emph{solution in the domain of the generator}  if $w$ satisfies (\ref{mixedP}) and belongs to  $\hat{\mathfrak{D}}_{\textbf{a}+}^{(\nu)}$; \item [(ii)] a \emph{generalized solution}  if for all sequence of functions $g_n \in C_{\textbf{a}}[\textbf{a}, \textbf{b}]$ such that $\sup_n ||g_n|| < \infty$ uniformly on $n$, and $ g_n \to g$ a.e.,  it holds that  $w(\textbf{t}) = \lim_{n\to \infty} w_n(\textbf{t})$ for all $\textbf{t} \in [\textbf{a},\textbf{b}]$, where $w_n$ is the solution (in the domain of the generator)  to the RL type problem $(-\,\textbf{D}_{\textbf{a}+}^{(\nu)}, \lambda,g_n, 0)$.
 \end{itemize}
 \end{definition}

 \begin{remark} By definition,  if there exists a generalized solution, then this is unique.
\end{remark}
For the sake of transparency, hereafter we  restrict ourselves to the analysis  for  $d=2$ and  $\textbf{a}= \textbf{0}$. 
Namely, let $\textbf{t}=(t_1, t_2),  \textbf{b}=(b_1, b_2) \in \mathbb{R}^2$, $\textbf{t} \in [  \textbf{0},\textbf{b} ]$. Let us consider the equation 
\begin{equation} \nonumber \begin{array}{rcll}
 -  ~_{t_1}\!D_{0+}^{(\nu_1)}  w(t_1,t_2)  -  ~_{t_2}\!D_{0+}^{(\nu_2)}  w(t_1,t_2) &=& \lambda u(t_1,t_2) - g(t_1,t_2),&   \\
 w (0,t_2)= w(t_1,0) &=& 0, & \,\, 
\end{array}
\end{equation}
where $t_i \in (0,b_i]$ for $i=1,2$. 

Let   $p_s^{+(\nu_i)}(t_i,r)$  (resp.   $p_s^{0+(\nu_i)}(t_i,r)$)  denote the transition density function of  the process $T^{+(\nu_i)}_{t_i}$  (resp. $T^{0+(\nu_i)}_{t_i}$) during the interval $[0,s]$.   If $\tau_{0}^{t_i,(\nu_i)} $ is the  first exit time from $(0, b_i]$ of the   process $T^{+(\nu_i)}_{t_i}$ (started at $t_i$), then the first exit time from  $(\textbf{0},\textbf{b}]=(0,b_1] \times (0,b_2]$ of the process $\textbf{T}_{\textbf{t}}^{+(\nu)}$, denoted by $\tau_{\textbf{0}}^{\textbf{t},(\nu)}$,  equals
\begin{equation}\nonumber 
\tau_{\textbf{0}}^{\textbf{t},(\nu)} = \min \left \{  \,\, \tau_{0}^{t_i,\nu_i}  \,\,:\,\,   i=1,2 \right \}.
\end{equation}
Due to the independence between the coordinates  of the process $\textbf{T}_{\textbf{t}}^{+(\nu)}$,   its transition density function, denoted by  $\textbf{p}_s^{+(\nu)}(\textbf{t},\textbf{r})$,  satisfies 
\begin{equation}\nonumber 
\textbf{p}_s^{+(\nu)}(\textbf{t},\textbf{r}) = \prod_{i=1}^2p_s^{+(\nu_i)}(t_i,r_i),
\end{equation}
yielding the following result.  
\begin{lemma}\label{densities2d} Let $\textbf{t}= (t_1,t_2) \in (0,b_1] \times (0,b_2]$. Suppose (H0)-(H1) hold for both functions  $\nu_1$ and $\nu_2$. Then,
\begin{itemize} 
\item [(i)]  The boundary points $(0,t_2)\in \mathbb{R}^2$ for all $t_2 \in [0,b_2)$, and $(t_1,0)\in \mathbb{R}^2$ for all $t_1 \in [0,b_1)$, are regular in expectation for both operators  $-\textbf{D}_{\textbf{0}+}^{(\nu)}$ and $-\textbf{D}_{\textbf{0}+*}^{(\nu)}$.  Moreover,  $\mathbf{E}\left [\tau_{\textbf{0}}^{\textbf{t},(\nu)}\right ] < + \infty $ uniformly on $\textbf{t}$.

\item [(ii)] If additionally  each $\nu_i$ satisfies  assumptions (H2)-(H3) and  $\mu_{\textbf{0}}^{\textbf{t}, (\nu)} (ds)$ denotes the probability law of $\tau_{\textbf{0}}^{\textbf{t},(\nu)}$,  then its density function  $\mu_{\textbf{0}}^{\textbf{t},(\nu)} (s)$ is given by
   \begin{equation}\nonumber 
\mu_{\textbf{0}}^{\textbf{t},(\nu)} (s)  \,=\,   \mu_{0}^{t_1,(\nu_1)}(s) \int_{0}^{t_2}  p_s^{+(\nu_2)} (t_2,r) + \mu_{0}^{t_2,(\nu_2)}(s) \int_{0}^{t_1}  p_s^{+(\nu_1)} (t_1,r), \quad s \ge 0.
    \end{equation} 
 
\item [(iii)] Further, assuming again that  each $\nu_i$ also satisfies   (H2)-(H3), the joint distribution of the pair $(\textbf{T}_{\textbf{t}}^{\textbf{0}+(\nu)} (s), \tau_{\textbf{0}}^{\textbf{t},(\nu)})$, denoted by $\varphi_{s,\textbf{a}}^{\textbf{t},(\nu)} (d\textbf{r}, d\xi)$, has the density
    \begin{align*}
    \varphi_{s,\textbf{0}}^{\textbf{t},(\nu)} (\textbf{r},\xi)  \,=\,\,& \varphi_{s,0}^{t_2,(\nu_2)} (r_2, \xi) p_s^{+(\nu_1)} (t_1,r_1) \int_{0}^{r_1} p_{\xi-s}^{+(\nu_1)} (r_1,y)dy  \, \,+ \\
   &+  \varphi_{s,0}^{t_1,(\nu_1)} (r_1, \xi) p_s^{+(\nu_2)} (t_2,r_2) \int_{0}^{r_2} p_{\xi-s}^{+(\nu_2)} (r_2,y)dy,    
    \end{align*}
 for all $ 0 \le s < \xi$ and $\textbf{r} = (r_1,r_2) \in (\textbf{0}, \textbf{t}]$.   
\end{itemize}
\end{lemma}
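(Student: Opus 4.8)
The plan is to reduce the whole statement to the one‑dimensional results established above, exploiting the independence of the two coordinate processes. By the Lie--Trotter decomposition quoted earlier, $\mathbf{T}_{\mathbf{t}}^{+(\nu)}(s)=\bigl(T_{t_1}^{+(\nu_1)}(s),T_{t_2}^{+(\nu_2)}(s)\bigr)$ with independent coordinates, and the first exit time satisfies $\tau_{\mathbf{0}}^{\mathbf{t},(\nu)}=\min\{\tau_1,\tau_2\}$ with $\tau_i:=\tau_0^{t_i,(\nu_i)}$ independent. I shall also use the one‑dimensional identities $\bar F_i(s):=\mathbf{P}[\tau_i>s]=\int_0^{t_i}p_s^{+(\nu_i)}(t_i,r)\,dr$ and $-\partial_s\bar F_i(s)=\mu_0^{t_i,(\nu_i)}(s)$, both consequences of $(\ref{lawofTau})$ under (H2)--(H3).

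For (i): since $\tau_{\mathbf{0}}^{\mathbf{t},(\nu)}\le\tau_i$ for $i=1,2$, Lemma~\ref{propH0} applied to each $\nu_i$ gives $\mathbf{E}[\tau_{\mathbf{0}}^{\mathbf{t},(\nu)}]\le\min_i\mathbf{E}[\tau_i]<\infty$ uniformly in $\mathbf{t}$; and letting $t_1\downarrow0$ (resp.\ $t_2\downarrow0$) forces $\tau_{\mathbf{0}}^{\mathbf{t},(\nu)}\le\tau_1\to0$ (resp.\ $\le\tau_2\to0$) in expectation, which is exactly regularity in expectation of the listed boundary points. The same holds verbatim for $-\mathbf{D}_{\mathbf{0}+*}^{(\nu)}$, since the stopped and killed processes, hence their first exit times from $(\mathbf{0},\mathbf{b}]$, coincide. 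For (ii): by independence $\mathbf{P}[\tau_{\mathbf{0}}^{\mathbf{t},(\nu)}>s]=\bar F_1(s)\bar F_2(s)$, which is absolutely continuous because each $\bar F_i$ is (Proposition~\ref{muTau}); differentiating the product, $\mu_{\mathbf{0}}^{\mathbf{t},(\nu)}(s)=-\bar F_1'(s)\bar F_2(s)-\bar F_1(s)\bar F_2'(s)=\mu_0^{t_1,(\nu_1)}(s)\bar F_2(s)+\mu_0^{t_2,(\nu_2)}(s)\bar F_1(s)$, which is the claimed formula after inserting the integral expressions for $\bar F_i$.

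For (iii), I work on $\{s<\xi\}$ and decompose the event $\{\tau_{\mathbf{0}}^{\mathbf{t},(\nu)}\in d\xi\}$ ($\xi>0$) into the two disjoint pieces $\{\tau_1\in d\xi,\ \tau_2>\xi\}$ and $\{\tau_2\in d\xi,\ \tau_1>\xi\}$; the overlap $\{\tau_1=\tau_2\}$ is null by independence and absolute continuity of the $\tau_i$. On the first piece, by independence of the coordinates the density of $(\mathbf{T}_{\mathbf{t}}^{\mathbf{0}+(\nu)}(s),\tau_{\mathbf{0}}^{\mathbf{t},(\nu)})$ at $(r_1,r_2,\xi)$ factorises: the first coordinate contributes the joint density $\varphi_{s,0}^{t_1,(\nu_1)}(r_1,\xi)$ of $(T_{t_1}^{0+(\nu_1)}(s),\tau_1)$ (Proposition~\ref{jointP} and the remark after it, valid since $s<\xi$), while the second contributes $\mathbf{P}[T_{t_2}^{0+(\nu_2)}(s)\in dr_2,\ \tau_2>\xi]$. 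Since $s<\xi$ implies $\tau_2>s$, we have $T_{t_2}^{0+(\nu_2)}(s)=T_{t_2}^{+(\nu_2)}(s)$ with density $p_s^{+(\nu_2)}(t_2,\cdot)$, and the Markov property at time $s$ rewrites the remaining constraint $\tau_2>\xi$ as the factor $\mathbf{P}[\tau_0^{r_2,(\nu_2)}>\xi-s]=\int_0^{r_2}p_{\xi-s}^{+(\nu_2)}(r_2,y)\,dy$ given $T_{t_2}^{+(\nu_2)}(s)=r_2$. Multiplying these and adding the symmetric contribution of the second piece yields the asserted density; the differentiability used throughout is supplied by (H2)--(H3).

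I expect the only real obstacle to be the measure‑theoretic bookkeeping in part (iii): cleanly justifying the disjoint decomposition of the minimum‑exit event, invoking the Markov property to peel off the conditional survival factor $\int_0^{r_2}p_{\xi-s}^{+(\nu_2)}(r_2,y)\,dy$, and confirming that no cross terms survive the triple differentiation. Parts (i) and (ii) are essentially immediate from $\tau_{\mathbf{0}}^{\mathbf{t},(\nu)}=\min\{\tau_1,\tau_2\}$ together with Lemma~\ref{propH0} and Proposition~\ref{muTau}.
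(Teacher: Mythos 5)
Your proposal is correct, and for parts (ii) and (iii) it is essentially the paper's argument: the paper obtains (ii) by differentiating the product $\mathbf{P}[\tau_0^{t_1,(\nu_1)}>s]\,\mathbf{P}[\tau_0^{t_2,(\nu_2)}>s]$ in $s$, and (iii) by differentiating $\prod_{i=1}^2\mathbf{P}[T_{t_i}^{0+(\nu_i)}(s)>r_i,\,T_{t_i}^{0+(\nu_i)}(\xi)>0]$ in $r_1,r_2,\xi$; the two cross terms produced by that triple differentiation are exactly the two pieces of your ``which coordinate exits first'' decomposition, with your Markov-property factor $\int_0^{r_2}p_{\xi-s}^{+(\nu_2)}(r_2,y)\,dy$ appearing in the paper as the inner integral in the pre-differentiated one-dimensional expression preceding Proposition~\ref{jointP}. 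The genuine difference is in part (i): the paper establishes regularity in expectation of the boundary $\partial_{\mathbf{0}}(\mathbf{0},\mathbf{b}]$ via (H1) and a Lyapunov-function argument (with $h_\omega(t_1,t_2)=t_1^{\omega_1}t_2^{\omega_2}$, following \cite{KVFDE}), whereas you simply dominate $\tau_{\mathbf{0}}^{\mathbf{t},(\nu)}=\min\{\tau_1,\tau_2\}\le\tau_i$ and invoke the one-dimensional Lemma~\ref{propH0} for each coordinate. Your route is more elementary and fully sufficient here, since the exit time of the product process is bounded by each coordinate exit time and Lemma~\ref{propH0} already gives $\mathbf{E}[\tau_0^{t_i,(\nu_i)}]\to 0$ as $t_i\downarrow 0$ uniformly bounded expectations; the paper's Lyapunov argument is the one that generalizes to situations where such a clean coordinatewise domination is not available (e.g.\ non-product dynamics), but for independent coordinates it buys nothing extra.
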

\begin{proof} $(i)$ The regularity in expectation of  the boundary $\partial_{\textbf{0}} (\textbf{0},\textbf{b}]$  is a consequence of   assumption (H1) and the Lyapunov method  (same reasoning as in \cite{KVFDE}) applied to  the  Lyapunov function 
\[
 h_{\omega}(t_1, t_2)  =  t_1 ^{\omega_1} t_2^{\omega_2},  \quad \omega_1, \omega_2 \in (0,1).
 \] 
The finite expectation of $\tau_{\textbf{0}}^{\textbf{t},(\nu)}$ is a consequence of  the finite expectation of each $\tau_0^{t_i, (\nu_i)}$.

\vspace{0.2cm}
\noindent (ii)  This is a generalization of Proposition \ref{muTau} and follows  directly by differentiating
 \begin{align}
  \mathbf{P}\left [\tau_{\textbf{0}}^{\textbf{t},(\nu)}  > s\right ]  &=   \mathbf{P}\left [\tau_{0}^{t_1,(\nu_1)}  > s\right ]  \mathbf{P}\left [\tau_{0}^{t_2,(\nu_2)}  > s\right ], 
    \nonumber 
   \end{align}
    with respect to $s$. Notice the use of the independence assumption in the previous equality.

\vspace{0.2cm}
\noindent (iii) This is a generalization of Proposition \ref{jointP} and is obtained  by differentiating   
\begin{align*}
\mathbf{P} \left[Ê \mathbf{T}_{\mathbf{t}}^{\mathbf{0}+(\nu)} (s)> \mathbf{r}, \tau_{\mathbf{0}}^{\mathbf{t},(\nu)} > \xi \right ] 
&= \prod_{i=1}^2 \mathbf{P} \left[Ê T_{t_i}^{0+(\nu_i)} (s) > r_i,T_{t_i}^{0+(\nu_i)} (\xi) > 0\right ],
\end{align*}
with respect to $r_1, r_2$ and $\xi$. 
\end{proof}

Let us now  generalize the definitions given in (\ref{defM}) and (\ref{defM2}).
For $\lambda \ge 0$  and  $g\in B[\textbf{0},\textbf{b}]$  define   
\begin{equation}\nonumber \label{defM2d}
 \textbf{M}_{\textbf{0},\lambda}^{+(\nu)} g(\textbf{t})  := \mathbf{E} \left [  \int_0^{\tau_{\textbf{0}}^{\textbf{t}, (\nu)}} e^{-\lambda s} g (\textbf{T}_{\textbf{t}}^{+(\nu)} (s)) ds\right ],\quad \textbf{t} \in (\textbf{0},\textbf{b}],  
 \end{equation}
 and 
\begin{equation} \nonumber 
 \textbf{M}_{\textbf{0},\lambda}^{+(\nu)} 1(\textbf{t})  := \mathbf{E} \left [  \int_0^{\tau_{\textbf{0}}^{\textbf{t}, (\nu)}} e^{-\lambda s}  ds\right ],\quad \textbf{t} \in [\textbf{0},\textbf{b}].  
 \end{equation}
 Note that $\textbf{M}_{\textbf{0},\lambda}^{+(\nu)} g(\cdot)$ is continuous on $[\textbf{0},\textbf{b}]$ and $ | \textbf{M}_{\textbf{0},\lambda}^{+(\nu)}  g(\textbf{t})| \le ||g|| \sup_{\textbf{t} \in [\textbf{0},\textbf{b}]} \mathbf{E} \left[ \tau_{\textbf{0}}^{\textbf{t},(\nu)}\right ]$. Moreover,  
\begin{equation}\nonumber 
   \textbf{M}_{\textbf{0},\lambda}^{+(\nu)} \cdot 1 (\textbf{t})=  \frac{1}{\lambda} \left ( 1 - \mathbf{E} \left[ e^{-\lambda \tau_{\textbf{0}}^{\textbf{t}, (\nu)}}\right ]\right ), 
\end{equation}
implying
\begin{equation}\nonumber 
  \mathbf{E} \left[ e^{-\lambda \tau_{\textbf{0}}^{\textbf{t}, (\nu)}}\right ] =  1 - \lambda \textbf{M}_{\textbf{0},\lambda}^{+(\nu)} \cdot 1 (\textbf{t}), 
\end{equation}
and yielding the next  generalization of Lemma \ref{MLaplace-MJoint}.
  \begin{lemma}\label{MLaplace-MJoint2d}
Let  $\textbf{t}=(t_1,t_2) \in (\textbf{0},\textbf{b}]$ and $\lambda > 0$. Suppose that  $\nu_i$ satisfies conditions (H0)-(H1),  for   $i=1,2$. Then,
\begin{align}\nonumber
\mathbf{E} \left[ e^{-\lambda \tau_{\textbf{0}}^{\textbf{t}, (\nu)}}\right ]  &=  \mathbf{E} \left[ e^{-\lambda \tau_{0}^{t_1, (\nu_1)}} 1_{\left \{ \tau_{0}^{t_1, (\nu_1)} < \tau_{0}^{t_2, (\nu_2)} \right \}}\right ] +\mathbf{E} \left[ e^{-\lambda \tau_{0}^{t_2, (\nu_2)}} 1_{\left \{ \tau_{0}^{t_2, (\nu_2)} < \tau_{0}^{t_1, (\nu_1)} \right  \}}\right ]. 
\end{align}
If additionally $\nu_i$ satisfies (H2)-(H3) for  each $i=1,2$,  then 
\begin{align*}
\mathbf{E} \left[ e^{-\lambda \tau_{\textbf{0}}^{\textbf{t}, (\nu)}}\right ] = &\int_0^{\infty} e^{-\lambda s} \left(    \mu_{0}^{t_1,(\nu_1)}(s) \int_{0}^{t_2}  p_s^{+(\nu_2)} (t_2,r) \right) ds \,\,+\\
& + \int_0^{\infty} e^{-\lambda s} \left(  \mu_{0}^{t_2,(\nu_2)}(s) \int_{0}^{t_1}  p_s^{+(\nu_1)} (t_1,r) \right)ds.
\end{align*}
Further, 
\begin{equation}\label{explicitD} 
\textbf{M}_{\textbf{0},\lambda}^{+(\nu)} g(\textbf{t})  = \int_0^{t_1}\int_0^{t_2} g(t_1-r_1,\,t_2-r_2 ) \int_0^{\infty} e^{-\lambda s}    p_s^{+(\nu_1)}(t_1,t_1-r_1)p_s^{+(\nu_2)}(t_2,t_2-r_2)  \, ds\, dr_2\,dr_1.
 \end{equation}
\end{lemma}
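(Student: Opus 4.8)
The plan is to reduce everything to the one-dimensional results already established, exploiting the product structure of the underlying process $\mathbf{T}_{\mathbf{t}}^{+(\nu)}$ and the independence of its coordinates. First I would establish the identity for $\mathbf{E}[e^{-\lambda \tau_{\mathbf{0}}^{\mathbf{t},(\nu)}}]$: since $\tau_{\mathbf{0}}^{\mathbf{t},(\nu)} = \min\{\tau_0^{t_1,(\nu_1)}, \tau_0^{t_2,(\nu_2)}\}$, the events $\{\tau_0^{t_1,(\nu_1)} < \tau_0^{t_2,(\nu_2)}\}$ and $\{\tau_0^{t_2,(\nu_2)} < \tau_0^{t_1,(\nu_1)}\}$ partition $\Omega$ up to a null set (the two exit times being independent with absolutely continuous laws under (H2)-(H3), they coincide with probability zero). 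On the first event $\tau_{\mathbf{0}}^{\mathbf{t},(\nu)} = \tau_0^{t_1,(\nu_1)}$ and on the second $\tau_{\mathbf{0}}^{\mathbf{t},(\nu)} = \tau_0^{t_2,(\nu_2)}$, so splitting the expectation over this partition gives the first displayed formula directly.

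Next I would pass to the explicit integral expression. On $\{\tau_0^{t_1,(\nu_1)} < \tau_0^{t_2,(\nu_2)}\}$ the first coordinate exits at time $s$ with density $\mu_0^{t_1,(\nu_1)}(s)$ (Proposition \ref{muTau}), and independently the second coordinate must still be inside $(0,t_2]$ at time $s$, which has probability $\int_0^{t_2} p_s^{+(\nu_2)}(t_2,r)\,dr = \mathbf{P}[\tau_0^{t_2,(\nu_2)} > s]$. Multiplying these by independence, integrating against $e^{-\lambda s}$, and symmetrically for the other term yields the second display; alternatively one can simply integrate $e^{-\lambda s}\mu_{\mathbf{0}}^{\mathbf{t},(\nu)}(s)$ using the density from Lemma \ref{densities2d}(ii). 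Here a small care is needed at the boundary of the region of integration, but since the exit-time laws are absolutely continuous the boundary contributes nothing.

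For the last identity I would start from the definition of $\mathbf{M}_{\mathbf{0},\lambda}^{+(\nu)} g(\mathbf{t})$, apply Fubini to write it as $\int_0^\infty e^{-\lambda s}\,\mathbf{E}[\mathbf{1}_{\{\tau_{\mathbf{0}}^{\mathbf{t},(\nu)} > s\}} g(\mathbf{T}_{\mathbf{t}}^{+(\nu)}(s))]\,ds$, and then use the joint density of $(\mathbf{T}_{\mathbf{t}}^{+(\nu)}(s), \tau_{\mathbf{0}}^{\mathbf{t},(\nu)})$ on $\{s < \xi\}$ from Lemma \ref{densities2d}(iii). Integrating out the $\xi$-variable over $(s,\infty)$ in each of the two terms — exactly as in the proof of Lemma \ref{MLaplace-MJoint}, where $\int_s^\infty \frac{\partial}{\partial\xi}\int_0^{r_i} p_{\xi-s}^{+(\nu_i)}(r_i,y)\,dy\,d\xi = \int_0^\infty \mu_0^{r_i,(\nu_i)}(\gamma)\,d\gamma = 1$ — collapses the joint density back to the product $p_s^{+(\nu_1)}(t_1,r_1)p_s^{+(\nu_2)}(t_2,r_2)$. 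Then a change of variables $r_i \mapsto t_i - r_i$ and one more interchange of the order of integration give \eqref{explicitD}.

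The main obstacle I anticipate is purely bookkeeping rather than conceptual: in the last computation one has two terms in the joint density (one with the $(\nu_1)$-exit "active" and the $(\nu_2)$-coordinate free, and one with the roles reversed), and in each term a different coordinate carries the normalization-to-one integral while the other carries $p_s^{+(\nu_i)}(t_i,r_i)$; keeping track of which factor is integrated against $\mu_0^{r,(\nu)}$ and which survives, and checking that the two terms recombine into the single product $\prod_i p_s^{+(\nu_i)}(t_i,r_i)$ under the full integral $\int_0^{t_1}\int_0^{t_2}$, requires care but no new idea. Everything else is a direct application of Lemma \ref{MLaplace-MJoint}, Proposition \ref{muTau}, Proposition \ref{jointP} and their two-dimensional analogues in Lemma \ref{densities2d}, together with the independence of the coordinates guaranteed by the Lie--Trotter product representation.
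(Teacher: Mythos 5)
Your proposal is correct and follows essentially the same route as the paper, whose proof consists precisely of repeating the argument of Lemma \ref{MLaplace-MJoint} with the density of $\tau_{\textbf{0}}^{\textbf{t},(\nu)}$ and the joint density of $\left(\textbf{T}_{\textbf{t}}^{+(\nu)}(s),\tau_{\textbf{0}}^{\textbf{t},(\nu)}\right)$ supplied by Lemma \ref{densities2d}, together with the independence of the coordinates. One small precision on the step you flagged: it is not each of the two terms separately that carries a normalization to one, but their sum --- after factoring out $p_s^{+(\nu_1)}(t_1,r_1)p_s^{+(\nu_2)}(t_2,r_2)$, the $\xi$-integral of $\mu_0^{r_2,(\nu_2)}(\xi-s)\mathbf{P}\left[\tau_0^{r_1,(\nu_1)}>\xi-s\right]+\mu_0^{r_1,(\nu_1)}(\xi-s)\mathbf{P}\left[\tau_0^{r_2,(\nu_2)}>\xi-s\right]$ is the integral of the density of the minimum of the two residual exit times, which equals one by Lemma \ref{propH0} --- exactly the recombination you anticipated.
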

\begin{proof}
Similar to the proof of Lemma \ref{MLaplace-MJoint} but using the density function of the r.v. $\tau_{\textbf{0}}^{\textbf{t}, (\nu)}$ and the joint distribution of the pair $(\textbf{T}_{\textbf{t}}^{\textbf{0}+(\nu)} (s), \tau_{\textbf{0}}^{\textbf{t},(\nu)})$   given in Lemma \ref{densities2d}.
\end{proof}

\noindent \textbf{Well-posedness result for the RL type linear equation.}
\begin{theorem} (\textbf{Case $\lambda > 0$})\label{wellposedness12d}
Let $\nu=(\nu_1,\nu_2)$ be a vector such that each $\nu_i$ is a function satisfying conditions (H0)-(H1). Suppose $\lambda > 0$ and $(t_1,t_2) \in (0,b_1]\times (0,b_2]$.  
\begin{itemize}
\item [(i)] If $g\in C_{\textbf{0}}[\textbf{0}, \textbf{b}]$, then the  equation $(-  ~_{t_1}\!D_{0+}^{(\nu_1)}-  ~_{t_2}\!D_{0+}^{(\nu_2)} , \lambda, g,0)$ has a unique solution in the domain of the generator  given by $w = \textbf{R}_{\lambda}^{\textbf{0}+(\nu)} g$, the resolvent operator  of the process $\textbf{T}_{\textbf{t}}^{\textbf{0}+(\nu)}$.
\item [(ii)] For any $g \in B[\textbf{0},\textbf{b}]$, the mixed equation $(-  ~_{t_1}\!D_{0+}^{(\nu_1)}-  ~_{t_2}\!D_{0+}^{(\nu_2)} , \lambda, g,0)$ is well-posed in the generalized sense and the solution  admits the stochastic representation
\begin{equation}\label{MsolRL2d}
w(t_1,t_2) =  \mathbf{E} \left[Ê \int_0^{\mathbf{\tau}_{ \textbf{0}}^{(t_1,t_2),(\nu)}}  e^{-\lambda s} g \left(T_{t_1}^{0+(\nu_1)} (s)\,\,,\,\,T_{t_2}^{0+(\nu_2)} (s)  \right ) ds\right ].
\end{equation} 
Moreover, if additionally each $\nu_i$, $i=1,2$,  satisfies conditions (H2)-(H3), then   $w(t_1,t_2)$ takes the explicit form in (\ref{explicitD}).
 \end{itemize}
 \end{theorem}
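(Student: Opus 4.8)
The plan is to mirror the one-dimensional argument of Theorem~\ref{wellposedness1}, now exploiting the product structure of $\textbf{T}_{\textbf{t}}^{\textbf{0}+(\nu)}$ together with the exit-time analysis already packaged in Lemmas~\ref{densities2d} and~\ref{MLaplace-MJoint2d}. Recall that $-\textbf{D}_{\textbf{0}+}^{(\nu)}$ is the generator of the Feller sub-Markov process $\textbf{T}_{\textbf{t}}^{\textbf{0}+(\nu)}$ on $C_{\textbf{0}}[\textbf{0},\textbf{b}]$ obtained by killing $\textbf{T}_{\textbf{t}}^{+(\nu)}$ upon an attempt to cross $\partial_{\textbf{0}}(\textbf{0},\textbf{b}]$; by the Lie--Trotter identification its coordinates are the independent killed processes $T_{t_i}^{0+(\nu_i)}$, and its first exit time is $\tau_{\textbf{0}}^{\textbf{t},(\nu)} = \min_i \tau_0^{t_i,(\nu_i)}$.

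For part~(i), fix $g \in C_{\textbf{0}}[\textbf{0},\textbf{b}]$. Since $\lambda>0$ and $-\textbf{D}_{\textbf{0}+}^{(\nu)}$ generates a Feller process, Theorem~1.1 in \cite{dynkin1965} gives at once that $w = \textbf{R}_{\lambda}^{\textbf{0}+(\nu)} g$ is the unique solution in the domain of the generator. It then remains to rewrite this resolvent as $\textbf{M}_{\textbf{0},\lambda}^{+(\nu)} g$: splitting the defining Bochner integral at $\tau_{\textbf{0}}^{\textbf{t},(\nu)}$ (finite a.s. with finite expectation, uniformly in $\textbf{t}$, by Lemma~\ref{densities2d}(i)) and using that the killed process is absorbed after $\tau_{\textbf{0}}^{\textbf{t},(\nu)}$ while coinciding with $\textbf{T}_{\textbf{t}}^{+(\nu)}$ before it, one obtains $w(\textbf{t}) = \mathbf{E}\big[\int_0^{\tau_{\textbf{0}}^{\textbf{t},(\nu)}} e^{-\lambda s} g(\textbf{T}_{\textbf{t}}^{+(\nu)}(s))\,ds\big]$, exactly as in the proof of Lemma~\ref{domains}. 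This already establishes the stochastic representation~(\ref{MsolRL2d}) for $g\in C_{\textbf{0}}[\textbf{0},\textbf{b}]$.

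For part~(ii), given $g \in B[\textbf{0},\textbf{b}]$, choose $g_n \in C_{\textbf{0}}[\textbf{0},\textbf{b}]$ with $\sup_n ||g_n|| <\infty$ and $g_n \to g$ a.e., and let $w_n = \textbf{M}_{\textbf{0},\lambda}^{+(\nu)} g_n$ be the corresponding solutions in the domain from part~(i). Using the uniform bound on $||g_n||$ together with $\sup_{\textbf{t}}\mathbf{E}[\tau_{\textbf{0}}^{\textbf{t},(\nu)}] < \infty$, the dominated convergence theorem yields $w(\textbf{t}):=\lim_n w_n(\textbf{t}) = \mathbf{E}\big[\int_0^{\tau_{\textbf{0}}^{\textbf{t},(\nu)}} e^{-\lambda s} g(\textbf{T}_{\textbf{t}}^{+(\nu)}(s))\,ds\big]$, which by Definition~\ref{soldefMix} is the (necessarily unique) generalized solution. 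That $w \in C_{\textbf{0}}[\textbf{0},\textbf{b}]$ follows from the stochastic continuity of $\textbf{t}\mapsto \textbf{T}_{\textbf{t}}^{+(\nu)}(s)$ on $(\textbf{0},\textbf{b}]$ and, at the boundary, from the regularity in expectation of $\partial_{\textbf{0}}(\textbf{0},\textbf{b}]$ established in Lemma~\ref{densities2d}(i), which also forces $w$ to vanish there. Finally, when each $\nu_i$ satisfies (H2)--(H3), inserting the joint density from Lemma~\ref{densities2d}(iii) and arguing as in Lemma~\ref{MLaplace-MJoint2d} turns the expectation into the explicit double integral~(\ref{explicitD}).

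The Fubini/DCT manipulations are routine; the point requiring genuine care is the passage to the limit for $g$ merely bounded and measurable — one must ensure the occupation measure $\mathbf{E}\big[\int_0^{\tau_{\textbf{0}}^{\textbf{t},(\nu)}} e^{-\lambda s}\,\mathbf{1}_{\{\textbf{T}_{\textbf{t}}^{+(\nu)}(s)\in\,\cdot\,\}}\,ds\big]$ does not charge the Lebesgue-null set on which $g_n\not\to g$, which is precisely where condition (H2) enters in the refined statement — together with the verification of continuity of $w$ up to the two-faced boundary $\partial_{\textbf{0}}(\textbf{0},\textbf{b}]$, which I would handle, as in Lemma~\ref{densities2d}, via the Lyapunov function $h_\omega(t_1,t_2)=t_1^{\omega_1}t_2^{\omega_2}$.
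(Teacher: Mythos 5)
Your proposal is correct and follows essentially the same route as the paper: part (i) via Theorem 1.1 of Dynkin together with the splitting of the resolvent at $\tau_{\textbf{0}}^{\textbf{t},(\nu)}$ (the two-dimensional analogue of Lemma \ref{domains}), and part (ii) by passing to the limit of resolvent solutions $\textbf{R}_{\lambda}^{\textbf{0}+(\nu)}g_n$ for an approximating sequence as in Definition \ref{soldefMix}, with the explicit formula coming from Lemma \ref{MLaplace-MJoint2d}. Your added remarks on the a.e.\ convergence under the occupation measure and boundary continuity simply make explicit points the paper leaves implicit.
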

\begin{proof}
(i)  Follows from Theorem 1.1 in  \cite{dynkin1965} as in the one-dimensional case.

\vspace{0.1cm}
\noindent (ii) If $g \in B[\textbf{0},\textbf{b}]$, the solution is obtained as a limit of solutions $\textbf{R}_{\lambda}^{\textbf{0}+(\nu)} g_n(\textbf{t})$ in the domain of the generator $-\mathbf{D}_{\textbf{0}+}^{(\nu)}$, where the sequence of functions $\{g_n\}_{n\ge 1}$ satisfies the  conditions  of Definition \ref{soldefMix}.  
 Finally, Lemma \ref{MLaplace-MJoint2d} provides the explicit representation of the solution $w$ in terms of transition densities.
 \end{proof}

\begin{theorem} (\textbf{Case $\lambda = 0$})
All assertions in Theorem \ref{wellposedness12d} are valid for $\lambda = 0$.
\end{theorem}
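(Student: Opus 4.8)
The plan is to mirror, in the two-dimensional setting of Theorem \ref{wellposedness12d}, the passage from Theorem \ref{wellposedness1} to Theorem \ref{wellposedness10}. The only place where $\lambda>0$ is genuinely used in the proof of Theorem \ref{wellposedness12d} is through the resolvent operator $\textbf{R}_{\lambda}^{\textbf{0}+(\nu)}$ and the appeal to Theorem 1.1 in \cite{dynkin1965}. For $\lambda=0$ one replaces $\textbf{R}_{\lambda}^{\textbf{0}+(\nu)}$ by the potential operator $\textbf{R}_{0}^{\textbf{0}+(\nu)}$ of the killed process $\textbf{T}_{\textbf{t}}^{\textbf{0}+(\nu)}$ and invokes Theorem 1.1$'$ in \cite{dynkin1965} instead. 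So the first step is to verify that $\textbf{R}_{0}^{\textbf{0}+(\nu)}$ is a bounded operator on $C_{\textbf{0}}[\textbf{0},\textbf{b}]$: this follows at once from the estimate $|\textbf{R}_{0}^{\textbf{0}+(\nu)}g(\textbf{t})|\le \mathbf{E}\big[\int_0^{\tau_{\textbf{0}}^{\textbf{t},(\nu)}}|g(\textbf{T}_{\textbf{t}}^{\textbf{0}+(\nu)}(s))|\,ds\big]\le \|g\|\,\sup_{\textbf{t}}\mathbf{E}[\tau_{\textbf{0}}^{\textbf{t},(\nu)}]$ combined with Lemma \ref{densities2d}$(i)$, which gives $\sup_{\textbf{t}\in[\textbf{0},\textbf{b}]}\mathbf{E}[\tau_{\textbf{0}}^{\textbf{t},(\nu)}]<\infty$. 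With the potential operator in hand, Theorem 1.1$'$ in \cite{dynkin1965} yields, for every $g\in C_{\textbf{0}}[\textbf{0},\textbf{b}]$, that $w=\textbf{R}_{0}^{\textbf{0}+(\nu)}g$ is the unique solution in the domain of $-\textbf{D}_{\textbf{0}+}^{(\nu)}$ to the equation with $\lambda=0$, which is assertion $(i)$.

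For assertion $(ii)$, given an arbitrary $g\in B[\textbf{0},\textbf{b}]$ one takes a sequence $g_n\in C_{\textbf{0}}[\textbf{0},\textbf{b}]$ with $\sup_n\|g_n\|<\infty$ and $g_n\to g$ a.e., and sets $w_n=\textbf{R}_{0}^{\textbf{0}+(\nu)}g_n$. Because $\tau_{\textbf{0}}^{\textbf{t},(\nu)}$ is integrable uniformly in $\textbf{t}$ (again Lemma \ref{densities2d}$(i)$), the dominated convergence theorem gives $w_n(\textbf{t})\to w(\textbf{t}):=\mathbf{E}\big[\int_0^{\tau_{\textbf{0}}^{\textbf{t},(\nu)}}g(\textbf{T}_{\textbf{t}}^{\textbf{0}+(\nu)}(s))\,ds\big]$ for each $\textbf{t}$, i.e. the stochastic representation (\ref{MsolRL2d}) with $\lambda=0$. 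Continuity of $w$ on $(\textbf{0},\textbf{b}]$ follows from stochastic continuity of $\textbf{T}_{\textbf{t}}^{\textbf{0}+(\nu)}$ in the starting point, while continuity up to the boundary $\partial_{\textbf{0}}(\textbf{0},\textbf{b}]$ follows from the regularity in expectation of that boundary, established in Lemma \ref{densities2d}$(i)$; hence $w\in C_{\textbf{0}}[\textbf{0},\textbf{b}]$ is the unique generalized solution in the sense of Definition \ref{soldefMix}. When in addition each $\nu_i$ satisfies (H2)--(H3), the explicit form of $w$ is obtained by setting $\lambda=0$ in (\ref{explicitD}) through Lemma \ref{MLaplace-MJoint2d}.

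I do not expect a genuine obstacle here: every ingredient is already available, and the argument is essentially a transcription of the proof of Theorem \ref{wellposedness10}, with the one-dimensional exit time $\tau_a^{t,(\nu)}$ replaced by $\tau_{\textbf{0}}^{\textbf{t},(\nu)}=\min\{\tau_0^{t_1,\nu_1},\tau_0^{t_2,\nu_2}\}$ and the one-dimensional potential and expectation bounds replaced by their counterparts from Lemma \ref{densities2d}$(i)$. The single point deserving a line of care is checking the hypotheses of Theorem 1.1$'$ of \cite{dynkin1965}, namely that the potential operator is everywhere defined and bounded on $C_{\textbf{0}}[\textbf{0},\textbf{b}]$ --- but this is precisely the estimate displayed above together with the uniform finiteness of $\mathbf{E}[\tau_{\textbf{0}}^{\textbf{t},(\nu)}]$.
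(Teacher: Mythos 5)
Your proposal is correct and follows essentially the same route as the paper: replace the resolvent $\textbf{R}_{\lambda}^{\textbf{0}+(\nu)}$ by the potential operator $\textbf{R}_{0}^{\textbf{0}+(\nu)}$, justify this via the uniform bound $\sup_{\textbf{t}}\mathbf{E}[\tau_{\textbf{0}}^{\textbf{t},(\nu)}]<\infty$ from Lemma \ref{densities2d}(i) and Theorem 1.1$'$ of \cite{dynkin1965}, and then repeat the approximation and explicit-representation steps. The paper's proof is just a one-line statement of this same substitution, so your more detailed write-up (mirroring Theorem \ref{wellposedness10}) is a faithful expansion of it, not a different argument.
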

\begin{proof}
The arguments in the proof of Theorem \ref{wellposedness12d} remain valid for the case $\lambda = 0$ replacing the resolvent $\mathbf{R}_{\lambda}^{\textbf{0}+(\nu)}$ by the  corresponding potential  operator $\textbf{R}_{0}^{\textbf{0}+(\nu)}$.
\end{proof}
Finally, we shall analyze  the  mixed linear equation which involves both the RL type and the Caputo type operator:
\begin{equation}\label{mixedRLC}
\begin{array}{rll}
 -  ~_{t_1}\!D_{0+}^{(\nu_1)}  u(t_1,t_2)  -  ~_{t_2}\!D_{0+*}^{(\nu_2)}  u(t_1,t_2)
 =& \lambda u(t_1,t_2) - g(t_1,t_2),& (t_1, t_2) \in (0,b_1]\times(0,b_2],  \\
 u (0,t_2)=& 0, &\, t_2 \in [0,b_2] \\
  u(t_1,0) =& \phi(t_1) &  t_1\in (0,b_1],
\end{array}
\end{equation}
for a given function $\phi \in C_{0}[0,b_1]$. This equation will be  referred  to  as the \emph{mixed linear problem} $(-  ~_{t_1}\!D_{0+}^{(\nu_1)} -  ~_{t_2}\!D_{0+*}^{(\nu_2)}, \lambda,g, \phi)$.

Denote by $\textbf{T}_{\textbf{t}}^{\textbf{0}+(\nu)*}:= \left (T_{t_1}^{0+(\nu_1)}, T_{t_2}^{0+*(\nu_2)} \right)$ the Feller process (with values on  $(0,b_1] \times [0,b_2]$) generated by the operator   $-  ~_{t_1}\!D_{0+}^{(\nu_1)} -  ~_{t_2}\!D_{0+*}^{(\nu_2)}$.  This process  is obtained from a process $\textbf{T}_{\textbf{t}}^{+(\nu)}:=\left (T_{t_1}^{+(\nu_1)}\,\,,\,\,T_{t_2}^{+(\nu_2)} \right )$ by either killing it  whether the first coordinate attempt to cross the boundary point $t_1 = 0$,  or by stopping it if the second coordinate does the same with the boundary point $t_2=0$. As before,  $\tau_{\textbf{0}}^{\textbf{t},(\nu)}$ denotes the first exit time from $(0,b_1]\times(0,b_2]$.

In order to solve the mixed  equation (\ref{mixedRLC}), we shall rewrite it   as a linear equation involving only RL type operators.  Namely,  let $\psi \in C([0,t_1]\times[0,t_2])$  be a function satisfying the boundary conditions in (\ref{mixedRLC}). Define $w(\textbf{t}):= u(\textbf{t}) - \psi(\textbf{t})$ for any $\textbf{t}=(t_1, t_2) \in [\textbf{0}, \textbf{b}]$. Observe that, by definition, $w$ vanishes at the boundary $\partial_{\textbf{0}}[\textbf{0}, \textbf{b}]$.  

 If  $u$ and $\psi$ belong to the domain of the generator $-\,~_{t_1}\!D_{0+}^{(\nu_1)} -  ~_{t_2}\!D_{0+*}^{(\nu_2)}$, then
\begin{align*}
\left( -\,~_{t_1}\!D_{0+}^{(\nu_1)} -  ~_{t_2}\!D_{0+*}^{(\nu_2)} \right ) \, w &=  \left( -\,~_{t_1}\!D_{0+}^{(\nu_1)} -  ~_{t_2}\!D_{0+*}^{(\nu_2)} \right ) \, u + \left( \,~_{t_1}\!D_{0+}^{(\nu_1)} +  ~_{t_2}\!D_{0+*}^{(\nu_2)} \right ) \, \psi \\
&=  \lambda u - \left [\, g \,- \,\left( \,~_{t_1}\!D_{0+}^{(\nu_1)} +  ~_{t_2}\!D_{0+*}^{(\nu_2)} \right ) \, \psi  \, \right ] =: \lambda w - \tilde{g},
\end{align*}
with $\tilde{g} : = \,g  - \lambda \psi -  ~_{t_1}\!D_{0+}^{(\nu_1)} \psi-  ~_{t_2}\!D_{0+*}^{(\nu_2)}\psi$.
Due to the properties satisfied by $\psi$, the function $w$ satisfies that $-    ~_{t_2}\!D_{0+*}^{(\nu_2)}w =  -  ~_{t_2}\!D_{0+}^{(\nu_2)} w=0$ at the boundary $\partial_{\textbf{0}}(\textbf{0}, \textbf{b}]$.   Consequently, the solution $u$ to  (\ref{mixedRLC}) can be written as $u=w + \psi$, where $w$ is the solution to the corresponding  RL type equation. This leads us to the next definition.

 \begin{definition} \label{solDefMixed}
Let $g \in B[\textbf{0}, \textbf{b}]$, $\lambda \ge 0$, and $\phi \in C_0[0,b_1]$.  A function  $u \in C[\textbf{0}, \textbf{b}]$ is  said to solve the mixed linear problem   $(-  ~_{t_1}\!D_{0+}^{(\nu_1)} -  ~_{t_2}\!D_{0+*}^{(\nu_2)}, \lambda,g, \phi)$ as
\begin{itemize}
\item [(i)] a \emph{solution in the domain of the generator}  if $u$ satisfies (\ref{mixedRLC}) and belongs to the domain of the generator $-  ~_{t_1}\!D_{0+}^{(\nu_1)} -  ~_{t_2}\!D_{0+*}^{(\nu_2)}$; \item [(ii)] a \emph{generalized solution}  if for any function $\psi$ in the domain of $-  ~_{t_1}\!D_{0+}^{(\nu_1)} -  ~_{t_2}\!D_{0+*}^{(\nu_2)}$ such that $\psi (0,\cdot) = 0$ and $\psi(\cdot, 0) = \phi (\cdot)$, then $u = \omega + \psi$,  where $\omega$ is  a solution (possibly generalized) to the RL type problem  \[ (-  ~_{t_1}\!D_{0+}^{(\nu_1)} -  ~_{t_2}\!D_{0+}^{(\nu_2)}, \lambda,\tilde{g}, 0), \]
with $\tilde{g} : = \,g  - \lambda \psi -  ~_{t_1}\!D_{0+}^{(\nu_1)} \psi-  ~_{t_2}\!D_{0+*}^{(\nu_2)}\psi$. 
 \end{itemize}
\end{definition}
 \begin{remark} By definition, it seems that a generalized solution depends on the function $\psi$, the next result we will show  that this is actually independent of $\psi$.
 \end{remark}
\begin{remark}
The concept of a generalized solution can  also  be given as a limit of  solutions in the domain of the generator. 
\end{remark}
\noindent \textbf{Well-posedness result for the mixed  linear equation.}
\begin{theorem}(\textbf{Case $\lambda > 0$}) \label{wellposednessMixed}
Let $\nu=(\nu_1,\nu_2)$ be a vector such that each $\nu_i$ is a function satisfying conditions (H0)-(H1). Suppose $\lambda > 0$ and $\phi \in C_{0}[0,b_1]$.   
\begin{itemize}
\item [(i)]  If $g\in C[\textbf{0},\textbf{b}]$ satisfies $g(0, \cdot) \equiv 0$ and $g(\cdot,0) = \lambda \phi (\cdot)$,  then the mixed equation $(-  ~_{t_1}\!D_{0+}^{(\nu_1)}-  ~_{t_2}\!D_{0+*}^{(\nu_2)} , \lambda, g,\phi)$ has a unique solution in the domain of the generator   given by  $u = \mathfrak{R}_{\lambda}^{\textbf{0}+(\nu)*} g$, the resolvent operator of the process $ \left( \,\,T_{t_1}^{0+(\nu_1)}, \,\,T_{t_2}^{0+*(\nu_2)} \,\, \right )$.
\item [(ii)] For any $g \in B[\textbf{0}, \textbf{b}]$, the mixed linear equation   $(-  ~_{t_1}\!D_{0+}^{(\nu_1)} -  ~_{t_2}\!D_{0+*}^{(\nu_2)}, \lambda, g,\phi)$ is well-posed in the generalized sense and the solution admits the stochastic representation   
\begin{align}\nonumber
u(t_1,t_2) &=  \mathbf{E} \left[ e^{-\lambda  \tau_0^{t_2,(\nu_2)}} Ê  \phi \left( T_{t_1}^{0+(\nu_1)} \left ( \tau_{0}^{t_2,(\nu_2)} \right )  \right )\,\, 1_{\left \{ \tau_{0}^{t_2,(\nu_2)} < \tau_{0}^{t_1,(\nu_1)} \right\}}\right ]  \\ &+ \mathbf{E} \left [  \int_0^{\tau_{\textbf{0}}^{\textbf{t},(\nu)}}  e^{-\lambda s}  g \left( T_{t_1}^{0+(\nu_1)} (s), \,\,T_{t_2}^{0+*(\nu_2)} (s) \right )\right ]. \label{SRMixed}
\end{align}
Moreover, if additionally each $\nu_i$, $i=1,2$,  satisfies conditions (H2)-(H3), then   the solution rewrites as
\begin{align}
u &(t_1,t_2) \,\,= \,\,\int_0^{t_1} \phi (t_1 -r) \int_0^{\infty} e^{-\lambda s} \mu_{0}^{t_2, (\nu_2)} (s) p_s^{+(\nu_1)} (t_1,t_1 - r_1) \,\,ds\,\, dr \, +  \nonumber \\  \label{explicitMixed}
 &+  \int_0^{t_1} \int_0^{t_2} g(t_1-r_1, t_2-r_2)  \int_0^{\infty} e^{-\lambda s} p_s^{+(\nu_1)}(t_1,t_1-r_1) p_s^{+(\nu_2)}(t_2,t_2-r_2)  \,ds\,\, dr_2 \,\,dr_1. 
\end{align}
 \end{itemize}
 \end{theorem}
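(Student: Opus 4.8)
The plan is to mimic the two-step scheme of Theorems \ref{wellposedness1}--\ref{wellposedness2}: first settle part (i) with Dynkin's resolvent theorem, then reduce the mixed problem to the purely RL-type problem of Theorem \ref{wellposedness12d}, and finally eliminate the auxiliary function $\psi$ from the answer via a Dynkin (It\^o) identity. For part (i): the operator $-\,_{t_1}\!D_{0+}^{(\nu_1)}-\,_{t_2}\!D_{0+*}^{(\nu_2)}$ generates the Feller process $\textbf{T}_{\textbf{t}}^{\textbf{0}+(\nu)*}=\big(T_{t_1}^{0+(\nu_1)},T_{t_2}^{0+*(\nu_2)}\big)$ obtained, by the Lie--Trotter argument of Section 6, by killing the first coordinate and stopping the second at $0$; hence Theorem 1.1 in \cite{dynkin1965} gives at once that $u=\mathfrak{R}_{\lambda}^{\textbf{0}+(\nu)*}g$ is the unique function in the domain solving the equation. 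The conditions $g(0,\cdot)\equiv 0$ and $g(\cdot,0)=\lambda\phi(\cdot)$ are then exactly what is needed to match the boundary data: since the first coordinate is killed at $0$ one gets $u(0,t_2)=0$, and since a second coordinate started at $0$ remains frozen there one gets $u(t_1,0)=R_{\lambda}^{0+(\nu_1)}[g(\cdot,0)](t_1)=\phi(t_1)$, the last equality following by evaluating the equation on the line $\{t_2=0\}$ (where $-\,_{t_2}\!D_{0+*}^{(\nu_2)}u$ vanishes) and using the killing/stopping structure of the two coordinates.

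For part (ii), I follow Definition \ref{solDefMixed}: fix any $\psi$ in the domain of $-\,_{t_1}\!D_{0+}^{(\nu_1)}-\,_{t_2}\!D_{0+*}^{(\nu_2)}$ with $\psi(0,\cdot)=0$ and $\psi(\cdot,0)=\phi$, and set $w=u-\psi$. Then $w$ solves the RL-type mixed problem $(-\,_{t_1}\!D_{0+}^{(\nu_1)}-\,_{t_2}\!D_{0+}^{(\nu_2)},\lambda,\tilde g,0)$ with $\tilde g=g-\lambda\psi-\,_{t_1}\!D_{0+}^{(\nu_1)}\psi-\,_{t_2}\!D_{0+*}^{(\nu_2)}\psi\in B[\textbf{0},\textbf{b}]$, which by Theorem \ref{wellposedness12d} is well-posed in the generalized sense, with $w(\textbf{t})=\mathbf{E}\big[\int_0^{\tau_{\textbf{0}}^{\textbf{t},(\nu)}}e^{-\lambda s}\tilde g\big(\textbf{T}_{\textbf{t}}^{\textbf{0}+(\nu)*}(s)\big)\,ds\big]$ by (\ref{MsolRL2d}); here I use that the killed, stopped and free processes all coincide before the exit time $\tau_{\textbf{0}}^{\textbf{t},(\nu)}$, so the free/stopped process may be used inside the integral. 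Next I check that $u=w+\psi$ does not depend on the choice of $\psi$: two admissible choices differ by an element of the domain vanishing on $\partial_{\textbf{0}}(\textbf{0},\textbf{b}]$, which makes the corresponding $w$'s differ by that same element with opposite sign, so $u=w+\psi$ is unchanged; uniqueness of the RL-type generalized solution then yields uniqueness of the mixed generalized solution and its continuous dependence on $\phi$.

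The analytic heart is to pass from $w+\psi$ to the $\psi$-free formula (\ref{SRMixed}). Applying the Dynkin formula to $s\mapsto e^{-\lambda s}\psi\big(\textbf{T}_{\textbf{t}}^{\textbf{0}+(\nu)*}(s)\big)$ and optional stopping at $\tau_{\textbf{0}}^{\textbf{t},(\nu)}$ (which has finite expectation by Lemma \ref{densities2d}) gives
\[
\mathbf{E}\Big[\int_0^{\tau_{\textbf{0}}^{\textbf{t},(\nu)}}e^{-\lambda s}\big(\lambda+\,_{t_1}\!D_{0+}^{(\nu_1)}+\,_{t_2}\!D_{0+*}^{(\nu_2)}\big)\psi\big(\textbf{T}_{\textbf{t}}^{\textbf{0}+(\nu)*}(s)\big)\,ds\Big]=\psi(\textbf{t})-\mathbf{E}\big[e^{-\lambda\tau_{\textbf{0}}^{\textbf{t},(\nu)}}\psi\big(\textbf{T}_{\textbf{t}}^{\textbf{0}+(\nu)*}(\tau_{\textbf{0}}^{\textbf{t},(\nu)})\big)\big],
\]
so that, after cancelling $\psi(\textbf{t})$, $u(\textbf{t})=\mathbf{E}\big[\int_0^{\tau_{\textbf{0}}^{\textbf{t},(\nu)}}e^{-\lambda s}g\big(\textbf{T}_{\textbf{t}}^{\textbf{0}+(\nu)*}(s)\big)\,ds\big]+\mathbf{E}\big[e^{-\lambda\tau_{\textbf{0}}^{\textbf{t},(\nu)}}\psi\big(\textbf{T}_{\textbf{t}}^{\textbf{0}+(\nu)*}(\tau_{\textbf{0}}^{\textbf{t},(\nu)})\big)\big]$. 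At the exit time, either the first coordinate has crossed $0$, in which case $\psi$ vanishes there since $\psi(0,\cdot)=0$; or the second coordinate crossed first, i.e.\ $\tau_{\textbf{0}}^{\textbf{t},(\nu)}=\tau_0^{t_2,(\nu_2)}<\tau_0^{t_1,(\nu_1)}$, in which case $\textbf{T}_{\textbf{t}}^{\textbf{0}+(\nu)*}(\tau_{\textbf{0}}^{\textbf{t},(\nu)})=\big(T_{t_1}^{0+(\nu_1)}(\tau_0^{t_2,(\nu_2)}),0\big)$ and $\psi(\cdot,0)=\phi$; the tie has probability zero by independence. This gives precisely the first term of (\ref{SRMixed}). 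Finally, (\ref{explicitMixed}) follows exactly as in the proof of Lemma \ref{MLaplace-MJoint2d}: insert the density $\mu_0^{t_2,(\nu_2)}$ of $\tau_0^{t_2,(\nu_2)}$ and the product transition densities $\prod_i p_s^{+(\nu_i)}(t_i,\cdot)$ from Lemma \ref{densities2d}, use that monotonicity of each coordinate turns $\{s<\tau_0^{t_i,(\nu_i)}\}$ into $\{T_{t_i}^{+(\nu_i)}(s)>0\}$, and change variables $r_i=t_i-\,\cdot$. I expect the main obstacle to be this last reduction: rigorously justifying the Dynkin/It\^o formula for the jump-type Feller process on a bounded interval and carefully tracking, at the exit time, how the \emph{killed} first coordinate contributes nothing while the \emph{stopped} second coordinate produces the $\phi$-boundary term — together with the bookkeeping needed to show the generalized solution is genuinely independent of the choice of $\psi$.
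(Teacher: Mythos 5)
Your proposal is correct and follows essentially the same route as the paper: part (i) via Dynkin's Theorem 1.1 with the boundary-matching conditions $g(0,\cdot)\equiv 0$, $g(\cdot,0)=\lambda\phi$; part (ii) via the reduction $w=u-\psi$, Theorem \ref{wellposedness12d} for $\tilde g$, and Doob's optional stopping applied to the martingale (\ref{MtgleMixed}) to trade the $\psi$-terms for the boundary term $\mathbf{E}[e^{-\lambda\tau}\psi(\mathbf{T}(\tau))]$, which is then identified with the $\phi$-term of (\ref{SRMixed}) exactly as you describe, and (\ref{explicitMixed}) follows from Lemma \ref{MLaplace-MJoint2d} and conditioning on $\tau_0^{t_2,(\nu_2)}$. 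Your explicit check that the generalized solution is independent of the choice of $\psi$ is a small addition the paper only asserts in a remark, but it does not change the argument.
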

\begin{proof} (i) As before, we apply Theorem 1.1 in \cite{dynkin1965} to the generator $-  ~_{t_1}\!D_{0+}^{(\nu_1)} -  ~_{t_2}\!D_{0+*}^{(\nu_2)}$. Therefore, if $g$ is a  continuous function on $[0,b_1] \times [0,b_2]$ such that  $g(0,\,\cdot \,) \equiv 0$, then  the function $u(t_1,t_2) = \mathfrak{R}_{\lambda}^{\textbf{0}+(\nu)*} g(t_1,t_2)$ solves the equation (\ref{mixedRLC}) without any boundary condition. Further, a simple calculation shows that \[u(t_1,0) = \mathfrak{R}_{\lambda}^{\textbf{0}+(\nu)*} g(t_1,0)= g(t_1, 0)/\lambda,\]  which implies that, under condition $g(\,\cdot \,, 0) = \lambda \phi (\cdot)$, the function $u$  solves the problem (\ref{mixedRLC}). 


\vspace{0.2cm}
\noindent (ii) For the general case  $g\in B[\textbf{0},\textbf{b}]$, take a function $\psi$ satisfying the conditions of  Definition  \ref{solDefMixed} and set $w:= u - \psi$. Since $w$ vanishes at the boundary  $\partial_{\textbf{0}}(\textbf{0},\textbf{b}]$,   Theorem \ref{wellposedness12d}  
 yields   \begin{equation}\nonumber
w(t)  =  \mathbf{E} \left[Ê \int_0^{\mathbf{\tau}_{ \textbf{0}}^{(t_1,t_2),(\nu)}}  e^{-\lambda s} \tilde{g} \left(T_{t_1}^{0+(\nu_1)} (s)\,\,,\,\,T_{t_2}^{0+(\nu_2)} (s)  \right ) ds\right ],
\end{equation}
with $\tilde{g} = g -  \,\lambda \psi - (   ~_{t_1}\!D_{0+}^{(\nu_1)} +  ~_{t_2}\!D_{0+*}^{(\nu_2)}) \, \psi$. 
Hence $w (t)  =  \,I - \, II $, where 
\begin{align*}
 \,I -& \, II :=\mathbf{E} \left[Ê \int_0^{\mathbf{\tau}_{ \textbf{0}}^{(t_1,t_2),(\nu)}}  e^{-\lambda s} g \left(T_{t_1}^{0+(\nu_1)} (s)\,\,,\,\,T_{t_2}^{0+(\nu_2)} (s)  \right ) ds\right ]  \\
&-  \mathbf{E} \left[Ê \int_0^{\mathbf{\tau}_{ \textbf{0}}^{(t_1,t_2),(\nu)}}  e^{-\lambda s}  (\lambda  +~_{t_1}\!D_{0+}^{(\nu_1)}  
+   ~_{t_2}\!D_{0+*}^{(\nu_2)}) \, \psi \left(T_{t_1}^{0+(\nu_1)} (s)\,\,,\,\,T_{t_2}^{0+(\nu_2)} (s)  \right ) ds\right ].
\end{align*}
Using that $\psi$ belongs to the domain of the generator $ -~_{t_1}\!D_{0+}^{(\nu_1)}  
-   ~_{t_2}\!D_{0+*}^{(\nu_2)}$,   Doob's stopping theorem applied to the martingale

\begin{align}\label{MtgleMixed}
e^{-\lambda r} &\psi \left(T_{t_1}^{0+(\nu_1)} (r)\,\,,\,\,T_{t_2}^{0+*(\nu_2)} (r)  \right ) + \\ &+ \int_0^r e^{-\lambda s} \left( \lambda  +~_{t_1}\!D_{0}^{(\nu_1)}  
+  ~_{t_2}\!D_{0+*}^{(\nu_2)} \right ) \psi \left(T_{t_1}^{0+(\nu_1)} (s)\,\,,\,\,T_{t_2}^{0+*(\nu_2)} (s)  \right )ds  \nonumber \end{align}
with the stopping time $\mathbf{\tau}_{ \textbf{0}}^{(t_1,t_2),(\nu)}$ implies
\begin{equation}\nonumber
 II\,= \,  \psi (t_1, t_2)\,-\, \mathbf{E} \left [ e^{-\lambda \tau_{\textbf{0}}^{(t_1,t_2),(\nu)} } \psi \left( T_{t_1}^{0+(\nu_1)} (\tau_{\textbf{0}}^{(t_1,t_2),(\nu)})\,\,,\,\,T_{t_2}^{0+(\nu_2)} (\tau_{\textbf{0}}^{(t_1,t_2),(\nu)})  \right )  \right ],  
\end{equation}
which, in turn, yields   (\ref{SRMixed}) as  $u = w + \psi$ and $\psi (\cdot,0) = \phi (\cdot)$.

Finally, the second term in (\ref{explicitMixed}) is a consequence of Lemma \ref{MLaplace-MJoint2d}, whilst  the first term is obtained by conditioning first on $\tau_{0}^{t_2,(\nu_2)}$ and then by using the joint density of the pair $(T_{t_1}^{0+(\nu)}(s), \tau_0^{t_1,(\nu_1)})$. 
\end{proof}
\begin{theorem} \textbf{(Case $\lambda= 0$) } 
All the assertions in Theorem \ref{wellposednessMixed} are valid for the case $\lambda = 0$.
\end{theorem}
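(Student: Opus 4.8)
The plan is to re-run, essentially line by line, the three-step argument of Theorem \ref{wellposednessMixed}, making throughout the single substitution of the resolvent operators $\mathfrak{R}_{\lambda}^{\textbf{0}+(\nu)*}$ and $\textbf{R}_{\lambda}^{\textbf{0}+(\nu)}$ by the corresponding potential operators $\mathfrak{R}_{0}^{\textbf{0}+(\nu)*}$ and $\textbf{R}_{0}^{\textbf{0}+(\nu)}$, and of Theorem 1.1 in \cite{dynkin1965} by its $\lambda=0$ counterpart, Theorem 1.1' in \cite{dynkin1965}. The one structural point to settle in advance is that these potential operators are well defined and bounded on $B[\textbf{0},\textbf{b}]$, which will follow at once from Lemma \ref{densities2d}$(i)$, just as in the proof of Theorem \ref{wellposedness10}, since for every $g\in B[\textbf{0},\textbf{b}]$
\[
\left| \mathbf{E}\! \left[ \int_0^{\tau_{\textbf{0}}^{\textbf{t},(\nu)}} g\!\left(\textbf{T}_{\textbf{t}}^{\textbf{0}+(\nu)*}(s)\right) ds \right] \right| \le \|g\|\, \sup_{\textbf{t}\in[\textbf{0},\textbf{b}]} \mathbf{E}\!\left[\tau_{\textbf{0}}^{\textbf{t},(\nu)}\right] < \infty .
\]

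For part $(i)$ I would observe that, when $\lambda=0$, the hypothesis $g(\cdot,0)=\lambda\phi(\cdot)$ together with the prescribed boundary value $u(\cdot,0)=\phi(\cdot)$ forces $\phi\equiv 0$, which is consistent with $u(\cdot,0)=\mathfrak{R}_{0}^{\textbf{0}+(\nu)*}g(\cdot,0)=0$; granting this, the proof that $u=\mathfrak{R}_{0}^{\textbf{0}+(\nu)*}g$ is the unique solution in the domain of the generator is word-for-word that of Theorem \ref{wellposednessMixed}$(i)$, now via Theorem 1.1' in \cite{dynkin1965}. For part $(ii)$ I would again put $w:=u-\psi$ with $\psi$ as in Definition \ref{solDefMixed}, reduce the problem to the RL type equation $(-~_{t_1}\!D_{0+}^{(\nu_1)}-~_{t_2}\!D_{0+}^{(\nu_2)},\,0,\,\tilde g,\,0)$, and invoke the $\lambda=0$ case of Theorem \ref{wellposedness12d}; the dominated convergence step that defines the generalized solution needs no modification, since $\|g_n\|\,\tau_{\textbf{0}}^{\textbf{t},(\nu)}$ is an integrable dominating function by Lemma \ref{densities2d}$(i)$, which also supplies the boundary regularity in expectation needed for continuity of $w$ up to $\partial_{\textbf{0}}(\textbf{0},\textbf{b}]$.

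The one step that is not mere transcription is the identification of the term $II$ via Doob's optional stopping theorem: with $\lambda=0$ the discount factor $e^{-\lambda s}$ disappears and one works instead with the martingale
\[
\psi\!\left(T_{t_1}^{0+(\nu_1)}(r),\, T_{t_2}^{0+*(\nu_2)}(r)\right) + \int_0^r \left( ~_{t_1}\!D_{0+}^{(\nu_1)} + ~_{t_2}\!D_{0+*}^{(\nu_2)} \right) \psi\!\left(T_{t_1}^{0+(\nu_1)}(s),\, T_{t_2}^{0+*(\nu_2)}(s)\right) ds ,
\]
whose stopping at $\tau_{\textbf{0}}^{(t_1,t_2),(\nu)}$ is legitimate because $\psi$ is bounded and $\mathbf{E}[\tau_{\textbf{0}}^{\textbf{t},(\nu)}]<\infty$ uniformly in $\textbf{t}$. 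This yields $II=\psi(t_1,t_2)-\mathbf{E}[\,\psi(T_{t_1}^{0+(\nu_1)}(\tau_{\textbf{0}}^{(t_1,t_2),(\nu)}),\,T_{t_2}^{0+(\nu_2)}(\tau_{\textbf{0}}^{(t_1,t_2),(\nu)}))\,]$, hence the representation (\ref{SRMixed}) with $\lambda=0$; the explicit formula (\ref{explicitMixed}) with $\lambda=0$ then follows from the obvious $\lambda=0$ analogue of Lemma \ref{MLaplace-MJoint2d} (its defining integrals still converging because $\mathbf{E}[\tau_{\textbf{0}}^{\textbf{t},(\nu)}]<\infty$), after conditioning on $\tau_{0}^{t_2,(\nu_2)}$ and using the joint density of $(T_{t_1}^{0+(\nu_1)}(s),\tau_{0}^{t_1,(\nu_1)})$ from Proposition \ref{jointP}. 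I expect the main obstacle to be organizational rather than analytic: keeping the roles of the two coordinates straight --- the first \emph{killed}, the second \emph{stopped} --- so that optional stopping is applied to exactly the right martingale and the boundary term carries $\phi$ and not $0$; once this is in order, the uniform bound on $\mathbf{E}[\tau_{\textbf{0}}^{\textbf{t},(\nu)}]$ settles every integrability concern.
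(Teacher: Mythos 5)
Your proposal is correct and follows essentially the same route as the paper, whose own proof simply says to replace the resolvent $\mathfrak{R}_{\lambda}^{\textbf{0}+(\nu)*}$ by the potential operator $\mathfrak{R}_{0}^{\textbf{0}+(\nu)*}$ and the martingale (\ref{MtgleMixed}) by its $\lambda=0$ counterpart. Your added details --- boundedness of the potential operator and legitimacy of optional stopping via the uniform bound on $\mathbf{E}\left[\tau_{\textbf{0}}^{\textbf{t},(\nu)}\right]$ from Lemma \ref{densities2d}(i), and the observation that the compatibility condition degenerates to $\phi\equiv 0$ in part (i) --- are exactly the points the paper leaves implicit.
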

\begin{proof}
For functions $g\in C_{\textbf{0}}[\textbf{0},\textbf{b}]$, the arguments in the proof of Theorem \ref{wellposednessMixed} remain valid using the potential operator  $\mathfrak{R}_{0}^{\textbf{0}+(\nu)*}$ instead of the   resolvent operator  $\mathfrak{R}_{\lambda}^{\textbf{0}+(\nu)*}$.  For the general case $g \in B[\textbf{0}, \textbf{b}]$,  the  martingale (\ref{MtgleMixed}) should be replaced by the corresponding martingale with $\lambda= 0$.
\end{proof}

\begin{remark}
As an application of  Theorem \ref{wellposednessMixed}, one  obtains that  for $\textbf{t}=(t_1,t_2)$, the function 
\small
\begin{align*}
&u(\textbf{t}) =
\frac{1}{\alpha} t_2 \int_0^{t_1} \phi (t_1 -r) \int_0^{\infty} e^{-\lambda s}\,\,s^{-\frac{1}{\alpha} -\frac{1}{\beta}- 1} w_{\alpha} \left( t_2 s^{-1/\alpha};1,1 \right ) w_{\beta} \left( r_1 s^{-1/\beta};1,1 \right )  \,\,ds\,\, dr \, +  \\
 &+  \int_0^{t_1} \int_0^{t_2} g(t_1-r_1, t_2-r_2)  \int_0^{\infty} e^{-\lambda s} s^{-\frac{1}{\beta}-\frac{1}{\alpha}} w_{\beta} \left( r_1 s^{-1/\beta};1,1 \right )w_{\alpha} \left( r_2 s^{-1/\alpha};1,1 \right )ds\, dr_2\,dr_1 
\end{align*}
\normalsize
 is the generalized solution to the  \textit{ mixed fractional linear  equation}  
\begin{equation}\nonumber 
\begin{array}{rcll}
 -  ~_{t_1}\!D_{0+}^{\beta}  u(t_1,t_2)  -  ~_{t_2}\!D_{0+*}^{\alpha}  u(t_1,t_2)
 &=& \lambda u(t_1,t_2) - g(t_1,t_2),& (t_1,t_2) \in (\textbf{0},\textbf{b}],  \\
 u (0,t_2)&=& 0, &\, t_2 \in [0,b_2] \\
  u(t_1,0) &=& \phi(t_1) &  t_1\in (0,b_1],
\end{array}
\end{equation}
for a given function $\phi \in C_{0}[0,b_1]$ and $\beta, \alpha \in (0,1)$.  Let us recall that $-  ~_{t_1}\!D_{0+}^{\beta} $ and $-  ~_{t_2}\!D_{0+*}^{\alpha}$ stand for the classical RL and Caputo derivatives of order $\beta$ and $\alpha$, respectively; and $w_{\beta}$ and $w_{\alpha}$ denote $\beta-$ and $\alpha-$stable densities, respectively (see Preliminaries).

The solution $u$ belongs to the domain of the generator only when $g\in C[\textbf{0},\textbf{b}]$, $g(\cdot,0) = \lambda \phi (\cdot)$ and $g(0,\cdot) \equiv 0$. 
\end{remark}
\section{Acknowledgements}
The first author is supported by Chancellor International Scholarship through the University of Warwick, UK.

\end{document}